\newcommand{\irr}{\operatorname{irr}}
\newtheorem{theorem}{Theorem}[section]
\newtheorem{lemma}[theorem]{Lemma}
\newtheorem{proposition}{Proposition}[section]
\newtheorem{corollary}[theorem]{Corollary}
\newtheorem{definition}{Definition}
\newtheorem{remark}{Remark}
\begin{document}
\begin{center}
{\large \bf Bounds of Trees with Degree Sequence-Based Topological Indices on Specialized Graph Classes}
\end{center}
\begin{center}
 Jasem Hamoud$^1$ \hspace{0.2 cm}  Duaa Abdullah$^{2}$\\[6pt]
 $^{1,2}$ Physics and Technology School of Applied Mathematics and Informatics \\
Moscow Institute of Physics and Technology, 141701, Moscow region, Russia\\[6pt]
Email: 	 $^{1}${\tt jasem1994hamoud@gmail.com},
 $^{2}${\tt abdulla.d@phystech.edu}
\end{center}
\noindent
\begin{abstract}
In this paper, the investigates Adriatic indices, specifically the sum lordeg index where it defined as $SL(G) = \sum_{u \in V(G)} \deg_G(u) \sqrt{\ln \deg_G(u)}$ and the variable sum exdeg index $SEI_a(G)$ for $a>0$, $a\neq 1$. We present several sharp bounds and characterizations of these and related topological indices on specialized graph classes, including regular graphs, thorny graphs, and chemical trees. Using the strict convexity of function $f$, inequalities for degree-based graph invariants $H_f(T)$ are derived under structural constraints on trees such as branching vertices and maximum degree. Examples on caterpillar trees illustrate the computation of indices like $^{m}M_2(G)$, $F(G)$, $M_2(G)$, and others, revealing the interplay between degree sequences and index values. Additionally, upper and lower bounds on the Sombor index $SO(G^*)$ of thorny graphs $G^*$ are established as 
\[
\operatorname{SO} \leqslant \sum_{uv\in E(G)}\sqrt{\frac{1}{\deg_{G}(u)^2+\deg_{G}(v)^2}+\deg_{G}(u)+\deg_{G}(v)},
\]
 including criteria for equality, with implications for regular and thorn-regular graphs. The treatment includes detailed formulas, constructive examples, and inequalities critical for understanding the relationship between graph topology and vertex-degree-based descriptors.
\end{abstract}

\noindent\textbf{AMS Classification 2010:} 05C05, 05C12, 05C20, 05C25, 05C35, 05C76, 68R10.

\noindent\textbf{Keywords:} Trees, Degree sequence, Sombor index, Extremal, Irregularity.

\noindent\textbf{UDC:} 519.172.1

\section{Introduction}\label{sec1}

Throughout this paper. Let $G$ be a simple graph, with vertex set $V(G)=\left\{v_{1}, v_{2}, \ldots, v_{n}\right\}$ and edge set $E(G)=\left\{e_{1}, e_{2}, \ldots, e_{m}\right\}$. The quantities $n$ and $m$ are called the order and size of $G$, respectively. The degree of vertex $w$ in $G, d_{G}(w)$, is the number of vertices adjacent to $w$. When from the context it is clear, which graph is considered, the index of $G$ in $d_{G}(w)$ will be omitted. $N(G, v)$ denotes the set of vertices adjacent to $v$ and $N[G, v]=N(G, v) \cup\{v\}$. The maximum degree of $G$, denoted by $\Delta(G)$, and the minimum degree of $G$, denoted by $\delta(G)$, are the maximum and minimum of its vertices' degrees. $\mathscr{D}(G)=\left(d_{G}\left(v_{1}\right), d_{G}\left(v_{2}\right), \ldots, d_{G}\left(v_{n}\right)\right)$, with $d_{G}\left(v_{1}\right) \geqslant d_{G}\left(v_{2}\right) \geqslant \cdots \geqslant d_{G}\left(v_{n}\right)$ is called the degree sequence of $G$. The graph $G$ is said to be regular of degree $k$, when $\mathscr{D}(G)=(k, k, \ldots, k)$. Otherwise, the graph is irregular~\cite{Ghalavand2023AshrafiAD,Cruz2014RadaGutman}. It is obvious that an $n$-vertex tree~\cite{Extremalvertexdegree} has the degree sequence $\left(\operatorname{deg}_{1}, \operatorname{deg}_{2}, \cdots, \operatorname{deg}_{n}\right)$ which is arranged in a non-increasing order if and only if $\sum_{i=1}^{n} \operatorname{deg}_{i}=2(n-1)$. An improper Kragujevac tree~\cite{Cruz2014RadaGutman} is a tree obtained by inserting a new vertex (of degree 2) on a pendent edge of a proper Kragujevac tree. The set of all improper Kragujevac trees of order $ n $ will be denoted by $ K_{g^*}^n $. we use a restricted relation in defining a quasi-regular bipartite graph for the characteristic polynomial. More precisely, $ G $ is a bipartite graph if and only if:
\[
c_{2k}(G) = \Phi_{G}(x) = x^{n} + \left[ \frac{n}{2} \right] x^{n-1} - c_{2k}(G)
\]
where $ c_{2k}(G) $ is a degree in a characteristic index for every $ k = 1, 2, \ldots $.\par
Let $\mathbb{M}_n(\mathbb{C})$ be the set of all square matrices of order $n$ with entries from complex field $\mathbb{C}$. 
For $M \in \mathbb{M}_n(\mathbb{C})$, the square roots of the eigenvalues of $MM^*$ or $M^*M$ are known as the \textit{singular values}~\cite{RatherImran2024Diene},  where $M^*$ is the complex conjugate of $M$. As $MM^*$ is positive semi-definite, so the singular values of $M$ are non-negative real numbers, 
denoted by $\sigma_i(M)$ and are indexed from largest to smallest as: 
$\sigma_1(M) \geq \sigma_2(M) \geq \dots \geq \sigma_n(M)$.  The \textit{Schatten p-norm}~\cite{RatherImran2024Diene} of $M \in \mathbb{M}_n(\mathbb{C})$, denoted by $\|M\|_p$, is the $p$-th root of the sum of the $p$-th powers of $\sigma_i(M)$'s, that is
$\|M\|_p = \left(\sigma_1^p(M) + \sigma_2^p(M) + \dots + \sigma_n^p(M) \right)^{\frac{1}{p}},$
where $1 \leq p \leq n$. \par 
The study analyses graphs using their \emph{spectra} (eigenvalues of matrices associated with the graph)~\cite{RatherImran2024Diene}.   The focus is on the \emph{Szeged matrix} $ S(G) $ and its eigenvalues $\mu_1, \mu_2, \dots, \mu_n$.  The largest eigenvalue $\mu_1$ plays a central role in establishing upper bounds on spectral graph invariants. The unique path~\cite{ZhangXM2013Zhang} connecting two vertices $u,v$ in $T$ will be denoted by $P_{T}(u,v)$.  The  number of edges on $P(u,v)$ is called distance $dist_T(u,v)$, or for short $dist(u,v)$  between them. We call a tree $(T,r)$ rooted at the vertex $r$ (or just by $T$ if it is clear what the root is) by specifying a vertex $r\in V(T)$. The $height$ of a vertex $v$ of a rooted tree $T$ with root $r$ is $h_{T}(v)=dist_{T}(r,v)$. For any two different vertices $u,v$ in a rooted tree $(T,r)$, we say that $v$ is a $successor$ of $u$ and $u$ is an $ancestor$ of $v$ if $P_{T}(r,u)\subset P_{T}(r,v)$.\par  The segment~\cite{Extremalvertexdegree} of a tree $T$ is a path-subtree $S$ whose terminal vertices are branching vertices or pendent vertices of $T$, that is, each internal vertex $u$ of $S$ has $\operatorname{deg}_{T}(u)=2$. The squeeze $S(T)$ of a tree $T$. The edge rotating capacity of vertex $x \in V(T)$ with $2 \leq \operatorname{deg}_{T}(x) \leq \Delta-1$ is defined as $\operatorname{deg}_{T}(x)-1$. The sum of the edge rotating capacities of all vertices with $2 \leq \operatorname{deg}_{T}(x) \leq \Delta-1$ in $T$ is called the total edge rotating capacity of $T$. Let us denote by $PT_{n, p}, S T_{n, s}, BT_{n, b}, DT_{n, k}$ and $MT_{2 m}$ the set of the $n$-vertex trees with $p$.

Let $a_n$  and $b_n$ be two positive sequences. Denote $a_n=\Theta(b_n)$ if $c_1b_n\leq a_n\leq c_2 b_n$ for some positive constants $c_1,c_2$. Denote  $a_n=\omega(b_n)$ if $\lim_{n\rightarrow\infty}\frac{a_n}{b_n}=\infty$.  Let $\mathcal{N}(0,1)$ be the standard normal distribution and $X_n$ be a sequence of random variables~\cite{Yuan2023M}. Then $X_n\Rightarrow\mathcal{N}(0,1)$ means $X_n$ converges in distribution to the standard normal distribution as $n$ goes to infinity. Denote $X_n=O_P(a_n)$ if $\frac{X_n}{a_n}$ is bounded in probability. Denote $X_n=o_P(a_n)$ if $\frac{X_n}{a_n}$ converges to zero in probability as $n$ goes to infinity. Let $\mathbb{E}[X_n]$ and $Var(X_n)$ denote the expectation and variance of a random variable $X_n$ respectively. $\mathbb{P}[E]$ denote the probability of an event $E$. Let $f=f(x,y)$ be a function. For non-negative integers $s,t$, $f^{(s,t)}=f^{(s,t)}(x,y)$ denote the partial derivative $\frac{\partial^{s+t}f(x,y)}{\partial x^s\partial y^t}$. For convenience, sometimes we write $f_x=f^{(1,0)}$, $f_y=f^{(0,1)}$, $f_{xx}=f^{(2,0)}$,$f_{yy}=f^{(0,2)}$ and $f_{xy}=f^{(1,1)}$. $\exp(x)$ denote the exponential function $e^x$.
For positive integer $n$, denote $[n]=\{1,2,\dots,n\}$. Given a finite set $E$, $|E|$ represents the number of elements in $E$.

Our goal of this paper, clarifying the importance of studying topological indices in graph theory due to their impact on graphs, trees, and networks, as well as their influence on other mathematical, physical, and chemical sciences, especially caterpillar trees. Providing further results through trees according to a degree sequence. Also, presenting and explaining many other topological indices.

This paper is organized as follows. In Section~\ref{sec1} we provide the context, motivation, and basic background  and establish the importance of the topic. Based on typical structures and examples from the literature. In Section~\ref{sec2}, we presented sets the foundational concepts, definitions, and notation that the reader needs to understand the rest of the paper. This section introduces the basic terminology and mathematical objects that will be used throughout, ensuring clarity and consistency, through subsection~\ref{subsec2}, we focus on energy of vertices of subdivision graphs. In Section~\ref{sec3} focuses on the study, comparison, and bounds of these and related topological indices, providing a comprehensive theoretical framework motivated by their applications in chemical graph theory and molecular informatics. Through this section we include study of Wiener index~\ref{sub1sec3} and Gutman index~\ref{sub2sec3}. Section~\ref{sec4} had provide random graphs with a given degree sequence $\mathscr{D}=(d_1,\dots,d_n)$. Section~\ref{sec5} focus on discussed some results in many papers had been presented.

 \section{Preliminaries}\label{sec2}
A pendant vertex~\cite{Ghalavand2023AshrafiAD} is a vertex of degree one. For a connected graph $G$ of order $n$ and size $m$, the cyclomatic number of $G$ is defined as $c=m-n+1$.  For a simple graph $G$ of order $n$ and size $m$ with eigenvalues $\lambda_{1} \geqslant \lambda_{2} \geqslant \cdots \geqslant \lambda_{n}$, they proved that $\lambda_{1} \geqslant 2 m / n$ with equality if and only if $G$ is regular~\cite{Ghalavand2023AshrafiAD}.  For $p = 2$, The Schatten 2-norm~\cite{RatherImran2024Diene} is the \textit{Frobenius norm}, denoted by $\|M\|_F$, defined by

$$\|M\|_F^2 = \sigma_1^2(M) + \sigma_2^2(M) + \dots + \sigma_n^2(M).$$

Therefore, an irregularity measure $\operatorname{CS}(G)=\lambda_{1}-2 m / n$. The vertex degree variance of the graph $G, \operatorname{Var}(G)$, as 
\[
\operatorname{Var}(G)=\frac{1}{n} \sum_{v \in V(G)}\left(d(v)-\frac{2 m}{n}\right)^{2}.
\]
To determined the most irregular graphs~\cite{Ghalavand2023AshrafiAD} with respect to irregularity measures $CS$ and $Var$ for various classes of graphs. 
For a nonincreasing sequence of positive integers $\mathscr{D}\footnote{Denote to degree sequence by $\mathscr{D}$.}=(d_0, \cdots,
d_{n-1})$ with $n\ge 3$, let ${\mathcal{T}}_{\mathscr{D}}$ denote the set of
all trees\footnote{Class of trees, meaning it has no cycles and every pair of vertices is connected by exactly one simple path. Trees are a fundamental class of graphs with several important types and related concepts.} with $\mathscr{D}$ as its degree sequence. We can construct a
special tree $T_{\mathscr{D}}^* \in \mathcal{T}_{\mathscr{D}}$ by using
breadth-first search method as follows. Firstly, label the vertex
with the largest degree $d_0$ as $v_{01}$ (the root). Secondly,
label the neighbors of $v_0$ as $v_{11},v_{12},\dots,v_{1d_0}$ from
left to right and let $d(v_{1i})=d_i$ for $i=1, \cdots, d_0$. Then
repeat the second step for all newly labeled vertices until all
degrees are assigned. 
\begin{definition}[\cite{ZhangXM2013Zhang}]
Let $T=(V,E)$ be a tree with root $v_0$. A well-ordering $\prec$ of the vertices is called a BFS-ordering  if $\prec$ satisfies the following properties: 
\begin{itemize}
    \item If $u, v\in V$, and $u\prec v$, then $h(u)\le h(v)$ and $d(u)\ge d(v)$;
    \item If there are two edges $ uu_1\in E(T)$ and
 $vv_1\in E(T)$ such that $u\prec v$, $h(u)=h(u_1)-1$ and $ h(v)=h(v_1)-1$,
 then $u_1\prec v_1$.
\end{itemize}
\end{definition}
Let $\mathscr{D}=(d_0,\cdots, d_{n-1})$ and  $\mathscr{D}^{\prime}=(d_0^{\prime}, \cdots, d_{n-1}^{\prime})$ be two  nonincreasing sequences. If  $\sum_{i=0}^{k}d_i\le\sum_{i=0}^kd_i^{\prime}$ for $k=0, \cdots, n-2$ and  $\sum_{i=0}^{n-1}d_i=\sum_{i=0}^{n-1}d_i^{\prime}$, then the sequence $\mathscr{D}^{\prime}$ is said to {\it major} the sequence $\mathscr{D}$ and denoted by $\mathscr{D}\triangleleft \mathscr{D}^{\prime}$. Assume $T$ be an optimal  tree in $\mathcal{T}_{\mathscr{D}}$ and
$P(x_m,y_m)=x_{m}x_{m-1}\dots x_{2}x_{1}(z)y_{1}y_{2}\dots
y_{m-1}y_m $ be a path of $T$.  If $f_{X_{i}}(x_i)\ge
f_{Y_{i}}(y_{i})$ for $i=1, \cdots, k$  with at least one strict
inequality and $1\le k\le m-1$,   then $f_{X_{\ge k+1}}(x_{k+1})\ge
f_{Y_{\ge k+1}}(y_{k+1})$.

\begin{proposition}[\cite{ZhangXM2013Zhang}]
 Let $\mathscr{D}=(d_0, \cdots d_{n-1})$ and $\mathscr{D}^{\prime}=(d_0^{\prime}, \cdots,
 d_{n-1}^{\prime})$ be two  nonincreasing graphic  degree sequences. If
 $\mathscr{D}\triangleleft \mathscr{D}^{\prime},$ then there exists a series of
 graphic degree sequences  $\mathscr{D}_1, \cdots, \mathscr{D}_k$ such that
 $\mathscr{D} \triangleleft \mathscr{D}_1\triangleleft \cdots \triangleleft
\mathscr{D}_k\triangleleft \mathscr{D}^{\prime}$, where $\mathscr{D}_i$ and $\mathscr{D}_{i+1}$
differ at exactly two entries, say $d_j$ ($d'_j$) and $d_k$ ($d'_k$)
of $\mathscr{D}_i$ ($\mathscr{D}_{i+1}$), with $d'_j = d_j +1$, $d'_k = d_k -1$ and
$j<k$.
\end{proposition}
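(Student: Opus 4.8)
The plan is to prove this by induction on the nonnegative integer
\[
\mu(\mathscr{D},\mathscr{D}') \;=\; \sum_{m=0}^{n-2}\Bigl(\textstyle\sum_{i=0}^{m}d_i' - \sum_{i=0}^{m}d_i\Bigr),
\]
which by the definition of $\triangleleft$ is $\ge 0$ and vanishes exactly when $\mathscr{D}=\mathscr{D}'$; that is the base case, in which the empty chain works. For the inductive step assume $\mathscr{D}\triangleleft\mathscr{D}'$ with $\mathscr{D}\neq\mathscr{D}'$. First I would pin down the transfer to perform. Let $j$ be the least index with $\sum_{i=0}^{j}d_i<\sum_{i=0}^{j}d_i'$ (it exists, for otherwise all prefix sums would agree and then $\mathscr{D}=\mathscr{D}'$). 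Because all shorter prefix sums are then equal we get $d_{j-1}=d_{j-1}'$ and $d_j<d_j'$. Next let $k$ be the least index $>j$ at which the prefix sums of $\mathscr{D}$ and $\mathscr{D}'$ coincide again (it exists and $k\le n-1$, since the total sums agree); then $\sum_{i=0}^{m}d_i<\sum_{i=0}^{m}d_i'$ for $j\le m\le k-1$, which forces $d_k>d_k'$ and $d_{k+1}\le d_{k+1}'$. Let $\mathscr{D}_1$ be the sequence obtained from $\mathscr{D}$ by adding $1$ to its entry in position $j$ and subtracting $1$ from its entry in position $k$.

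I would then verify four things. (a) $\mathscr{D}_1$ is still nonincreasing: at position $j$ use $d_{j-1}=d_{j-1}'\ge d_j'\ge d_j+1$, and at position $k$ use $d_{k+1}\le d_{k+1}'\le d_k'\le d_k-1$; all remaining adjacent comparisons are inherited from $\mathscr{D}$ (using $d_j\ge d_{j+1}$, $d_{k-1}\ge d_k$, and $d_k\le d_j$ together with integrality for the edge case $j+1=k$). (b) $\mathscr{D}\triangleleft\mathscr{D}_1$: the prefix sums of $\mathscr{D}_1$ exceed those of $\mathscr{D}$ by exactly $1$ for $j\le m\le k-1$ and equal them otherwise, and the totals agree. (c) $\mathscr{D}_1\triangleleft\mathscr{D}'$: on the range $j\le m\le k-1$ the prefix sum of $\mathscr{D}'$ beats that of $\mathscr{D}$ by a positive integer, hence by at least $1$, so it still dominates the prefix sum of $\mathscr{D}_1$; elsewhere nothing changed. (d) $\mu(\mathscr{D}_1,\mathscr{D}')=\mu(\mathscr{D},\mathscr{D}')-(k-j)<\mu(\mathscr{D},\mathscr{D}')$, since $k-j\ge 1$ and all indices $j,\dots,k-1$ lie in $\{0,\dots,n-2\}$. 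Granting also that $\mathscr{D}_1$ is graphic (below), the inductive hypothesis applied to the pair $(\mathscr{D}_1,\mathscr{D}')$ yields a chain of elementary transfers from $\mathscr{D}_1$ to $\mathscr{D}'$, and prefixing it with the transfer $\mathscr{D}\triangleleft\mathscr{D}_1$ completes the step.

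The one substantial point, and the step I expect to cost the most care, is that $\mathscr{D}_1$ is again a graphic sequence. In the setting where everything lives among tree degree sequences (positive entries, total $2(n-1)$, as with $\mathcal{T}_{\mathscr{D}}$), this is automatic: the entry of $\mathscr{D}_1$ in position $k$ equals $d_k-1\ge d_k'\ge 1$, so $\mathscr{D}_1$ is a positive integer sequence of length $n$ summing to $2(n-1)$ and is therefore realized by some tree. In general I would argue through the Erd\H{o}s--Gallai inequalities: for a truncation index $r\ge k$ the inequality for $\mathscr{D}_1$ is literally the one for $\mathscr{D}$ (both modified entries sit in the prefix, the tail is untouched), hence holds; for $r<k$ I would bound the left-hand side for $\mathscr{D}_1$ by $\sum_{i=0}^{r}d_i'$ via (c), invoke the Erd\H{o}s--Gallai inequality for $\mathscr{D}'$, and then need $\sum_{i>r}\min\{r,(\mathscr{D}_1)_i\}\ge\sum_{i>r}\min\{r,d_i'\}$. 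This last inequality says that the truncated tail sums $\sum_{i>r}\min\{r,\pi_i\}$ are monotone along the majorization order; it can be obtained by writing $\sum_{i>r}\min\{r,\pi_i\}=\sum_{c=1}^{r}\pi^{*}_c-\sum_{i=1}^{r}\min\{r,\pi_i\}$, i.e.\ as the difference of a prefix sum of the conjugate partition $\pi^{*}$ and the number of cells of $\pi$ in its $r\times r$ corner, and then using that conjugation reverses majorization together with the monotonicity of $\sum_{c\le r}(\,\cdot\,-r)^{+}$ under domination of prefix sums. Everything outside this comparison is bookkeeping on prefix sums, so the bulk of the risk is concentrated there.
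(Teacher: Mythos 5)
The paper does not actually prove this proposition: it is quoted verbatim from \cite{ZhangXM2013Zhang} and used as a black box, so there is no in-paper argument to compare yours against. Judged on its own, your proof is correct and complete. The induction on the total prefix-sum deficiency $\mu(\mathscr{D},\mathscr{D}')$, the choice of $j$ as the first index where the prefix sums separate and $k$ as the first index where they rejoin, and the four verifications (a)--(d) are all sound; in particular the inequalities $d_{j-1}=d'_{j-1}\ge d'_j\ge d_j+1$ and $d_{k+1}\le d'_{k+1}\le d'_k\le d_k-1$ correctly guarantee that the transferred sequence is still nonincreasing, and the potential drops by exactly $k-j\ge 1$. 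This is the standard ``elementary transfer chain'' argument for majorization, which is almost certainly also what underlies the cited source.

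You are right that the only substantive issue is graphicality of the intermediate sequences, and your treatment of it is adequate. In the setting where the paper actually invokes the proposition (tree degree sequences in $\mathcal{T}_{\mathscr{D}}$, i.e.\ positive integer sequences summing to $2(n-1)$), your observation that $d_k-1\ge d'_k\ge 1$ settles it immediately. For general graphic sequences, your Erd\H{o}s--Gallai chain
\[
\sum_{i\le r}(\mathscr{D}_1)_i\;\le\;\sum_{i\le r}d'_i\;\le\;r(r-1)+\sum_{i>r}\min\{r,d'_i\}\;\le\;r(r-1)+\sum_{i>r}\min\{r,(\mathscr{D}_1)_i\}
\]
is valid, and the final tail comparison does follow from conjugation reversing majorization together with Tomi\'c's inequality applied to the convex nondecreasing function $t\mapsto(t-r)^{+}$ on the first $r$ conjugate parts. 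Two small remarks: what you are reproving there is the known theorem that any sequence majorized by a graphic sequence (with the same even sum) is itself graphic, which could simply be cited to shorten the argument; and you should note explicitly that $d_k-1\ge d'_k\ge 0$ so that $\mathscr{D}_1$ remains a nonnegative integer sequence, and that the case $k=n-1$ (where $d_{k+1}$ does not exist) is vacuous for the monotonicity check. Neither point is a gap, only a matter of polish.
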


\begin{theorem}[\cite{YarahmadiZA2023}]
The total irregularity of $G$ is:
\[
\irr_t(G)=\frac{1}{2}\sum_{\{u,v\}\subseteq V(G)}\lvert\deg_u(G)-\deg_v(G)\rvert,
\]
for simple and undirected graph $G$ we have
\[
\irr_t(G)=\begin{cases}
    \frac{1}{12}(2n^3-3n^2-2n+3) & n \text{ is odd}, \\
    \frac{1}{12}(2n^3-3n^2-2n) & n \text{ is even}.
\end{cases}
\]
\end{theorem}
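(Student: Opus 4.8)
I read the statement as the assertion that the displayed cubic is the maximum of $\irr_t(G)$ over all simple graphs $G$ on $n$ vertices, i.e.\ the sharp upper bound. The plan is: reduce to degree sequences, reduce further to threshold graphs via majorization, locate the extremizer (the antiregular graph), and evaluate. First I would record that $\irr_t(G)$ depends only on the degree sequence: writing the degrees in nonincreasing order $d_1\ge d_2\ge\cdots\ge d_n$ and noting that $d_i$ is the larger entry in $n-i$ of the pairs containing it and the smaller in $i-1$ of them, the definition collapses to
\[
\irr_t(G)=\sum_{i=1}^{n}(n+1-2i)\,d_i ,
\]
and one Abel summation turns this into $\irr_t(G)=2\sum_{i=1}^{n-1}S_i-2m(n-1)$, where $S_i=d_1+\cdots+d_i$ and $m=|E(G)|$. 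Thus the task is to maximize a linear functional whose coefficient vector $(n-1,n-3,\dots,1-n)$ is strictly decreasing; it is also worth noting $\irr_t(\overline G)=\irr_t(G)$, since complementation sends each $d_i$ to $n-1-d_i$ and preserves every $|d_i-d_j|$.

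Because the coefficients are decreasing, $\irr_t$ is monotone under majorization: if a graphic sequence $\mathbf d'$ majorizes $\mathbf d$ then $\irr_t(\mathbf d')\ge\irr_t(\mathbf d)$, immediately from the Abel form. For each fixed number of edges $m$ the graphic sequence on $n$ vertices that majorizes all others is realized by the threshold graph $\Theta_{n,m}$ (a classical fact), so
\[
\max_{|V(G)|=n}\irr_t(G)=\max_{0\le m\le\binom n2}\irr_t(\Theta_{n,m}),
\]
which reduces the problem to a one-parameter family, equivalently to optimizing over $\{0,1\}$-creation sequences of length $n$.

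Next I would argue that the maximum is attained at the antiregular graph $A_n$ — the threshold graph with alternating creation sequence, whose degree sequence is $1,2,\dots,n-1$ with the central value $\lfloor n/2\rfloor$ repeated — by an exchange argument on creation sequences, using that $n+1-2i$ is positive for $i<(n+1)/2$ and negative for $i>(n+1)/2$, which forces the balanced staircase; alternatively by showing $m\mapsto\irr_t(\Theta_{n,m})$ is a unimodal piecewise cubic whose maximum occurs at $m=\lfloor n^2/4\rfloor$, the edge count of $A_n$. Substituting the degree sequence of $A_n$ into $\irr_t=\sum_i(n+1-2i)d_i$ leaves an elementary finite sum; evaluating it and splitting on the parity of $n$ — which decides whether the doubled degree $\lfloor n/2\rfloor$ is met by a positive, zero, or negative coefficient, and hence produces the extra $+3$ — yields $\tfrac1{12}(2n^3-3n^2-2n)$ for $n$ even and $\tfrac1{12}(2n^3-3n^2-2n+3)$ for $n$ odd.

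The main obstacle is proving rigorously that $A_n$ beats every other threshold graph: a naive bit flip or adjacent transposition on creation sequences need not improve $\irr_t$ monotonically, so one must either set up the exchange with a suitable potential or push through the explicit optimization of the piecewise cubic $m\mapsto\irr_t(\Theta_{n,m})$ — equivalently, combine the two Erd\H{o}s--Gallai regimes $S_i\le i(n-1)$ and $S_i\le\binom i2+m$ and maximize the resulting bound on $2\sum_{i<n}S_i-2m(n-1)$ jointly over $i$ and $m$. That joint optimization is the computational heart; once it is done, the parity split and the closed form fall out, and sharpness is immediate since $\irr_t(A_n)$ has already been computed along the way.
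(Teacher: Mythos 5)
First, a point of comparison: the paper does not prove this statement at all --- it is imported verbatim from \cite{YarahmadiZA2023} as a background theorem in the preliminaries --- so there is no in-paper argument to measure your proposal against. Note also that the statement as printed cannot be literally true (any regular graph has $\irr_t(G)=0$); your reading of it as the \emph{sharp maximum} of $\irr_t$ over simple graphs on $n$ vertices is the correct repair, and the displayed cubic does agree with the known maximum. One small caveat: with the paper's normalization $\tfrac12\sum_{\{u,v\}\subseteq V(G)}$ over unordered pairs the stated values would come out halved; your identity $\irr_t(G)=\sum_{i}(n+1-2i)d_i$ silently adopts the convention (sum over unordered pairs, no $\tfrac12$) under which the theorem is actually true, which is the right call but deserves a sentence.

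As for the proposal itself: the reductions are sound and easy to verify --- the coefficient count giving $\sum_i(n+1-2i)d_i$, the Abel summation to $2\sum_{i<n}S_i-2m(n-1)$, monotonicity under majorization, and the restriction to threshold graphs are all correct, and the closed form checks out against $n=3,4,5$ via the antiregular graph. The one genuine gap is the step you yourself flag: showing that no threshold graph beats the antiregular one. This is not a formality. Already for $n=4$ the maximum value $6$ is attained at $m=2$, $3$ and $4$, so $m\mapsto\irr_t(\Theta_{n,m})$ has a plateau rather than a strict peak, and a naive ``strictly improving'' bit-flip or transposition argument on creation sequences will stall; you genuinely need either the explicit optimization of the piecewise cubic in $m$ or the joint Erd\H{o}s--Gallai bound $S_i\le\min\{i(n-1),\binom{i}{2}+m\}$ maximized over $i$ and $m$, and the parity bookkeeping that produces the $+3$ lives entirely inside that computation. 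Until it is carried out the argument is a plan, not a proof; but the plan is viable, and once the optimization is done it delivers both the bound and its sharpness in one pass.
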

\begin{definition}[\cite{MolloyM1995Reed}]
An asymptotic degree sequence $\mathcal{D}$ is \emph{feasible} if $G_n \neq \emptyset$ for all $n \geq 1$.
\end{definition}
\begin{definition}[\cite{MolloyM1995Reed}]
An \emph{asymptotic degree sequence} $\mathcal{D}$ is \emph{smooth} if there exist constants $\lambda_i$
    such that $\lim_{n \to \infty} d_i(n) = \lambda_i$.
\end{definition}
\begin{definition}[thorn-regular graph~\cite{Gut2025man}]
Let $p_{1}, p_{2}, \ldots, p_{n}$ be non-negative integers. Then the thorny graph of the graph $G$, denoted by $G^{*}$, is obtained by attaching $p_{i}$ pendent vertices to the vertex $v_{i}$, for all $i=1,2, \ldots, n$. Thus, the number of vertices and edges of $G^{*}$ is
$$
n^{*}=n+\sum_{i=1}^{n} p_{i} \quad \text { and } \quad m^{*}=m+\sum_{i=1}^{n} p_{i}
$$
respectively. If $p_{1}=p_{2}=\cdots=p_{n}$, then $G^{*}$ is said to be a thorn-regular graph.
\end{definition}
\begin{theorem}[\cite{Yuan2023M}]
Let $\mathcal{I}_n$ be the topological index of the random graph
$\mathcal{G}_n(\beta, W)$  and $\sigma_n^2$. 
Then
\[
\frac{\mathcal{I}_n-\mathbb{E}[\mathcal{I}_n]}{\sigma_n}\Rightarrow \mathcal{N}(0,1),
\]
as $n$ goes to infinity. 
In addition, the expectation $\mathbb{E}[\mathcal{I}_n]$ has the following asymptotic expression
\[
\mathbb{E}[\mathcal{I}_n]=\left(1+O\left(\frac{1}{np_n}\right)\right)\sum_{1\leq i<j\leq n}p_nw_{ij}f(w_{i(j)},w_{j(i)}),
\]
where the error rate $\frac{1}{np_n}$ cannot be improved.
\end{theorem}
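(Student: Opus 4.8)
The plan is to represent $\mathcal{I}_n$ as a (mildly non-linear) quadratic functional of the independent edge indicators of $\mathcal{G}_n(\beta,W)$, to linearize it so that a martingale central limit theorem applies, and to read off the expectation expansion from the concentration of the degree sequence. Write
\[
\mathcal{I}_n=\sum_{1\le i<j\le n}A_{ij}\,f\!\bigl(d_i,d_j\bigr),
\]
where the $A_{ij}$ are independent $\mathrm{Bernoulli}(p_nw_{ij})$ variables, $d_i=\sum_{k\ne i}A_{ik}$, $f$ is the symmetric twice differentiable kernel defining the index, and $\sigma_n^2=\operatorname{Var}(\mathcal{I}_n)$. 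Let $\bar d_i=\mathbb{E}[d_i]=\sum_{k\ne i}p_nw_{ik}$, and let $w_{i(j)}$ denote the (model normalized) conditional expected degree of $v_i$ given that the edge $v_iv_j$ is present. Under the standing assumptions $np_n\to\infty$ and $\sigma_n\to\infty$, Bernstein's inequality gives $\max_i|d_i-\bar d_i|=O(\sqrt{np_n\log n})$ with high probability.

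For the expectation expansion I would argue deterministically: by independence $\mathbb{E}[\mathcal{I}_n]=\sum_{i<j}p_nw_{ij}\,\mathbb{E}\bigl[f(d_i,d_j)\mid A_{ij}=1\bigr]$, and a second order Taylor expansion of $f$ around the conditional means $\bigl(p_nw_{i(j)},p_nw_{j(i)}\bigr)$ kills the first order fluctuation terms in expectation, leaving a Hessian remainder of relative size $\operatorname{Var}(d_i)/\bar d_i^2+\operatorname{Var}(d_j)/\bar d_j^2=O(1/(np_n))$. Summing over edges then yields $\mathbb{E}[\mathcal{I}_n]=\bigl(1+O(1/(np_n))\bigr)\sum_{i<j}p_nw_{ij}f(w_{i(j)},w_{j(i)})$.

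For the asymptotic normality I would set up the Doob martingale obtained by exposing the edges $A_{12},A_{13},\dots$ in a fixed order and apply the McLeish/Brown martingale CLT to $(\mathcal{I}_n-\mathbb{E}[\mathcal{I}_n])/\sigma_n$. The crucial reduction is the linearization: replacing each $f(d_i,d_j)$ inside $\mathcal{I}_n$ by
\[
f(\bar d_i,\bar d_j)+f_x(\bar d_i,\bar d_j)(d_i-\bar d_i)+f_y(\bar d_i,\bar d_j)(d_j-\bar d_j)
\]
incurs an error whose variance is $O\bigl((np_n)^{-1}\sigma_n^2\bigr)$ by the same Taylor estimate, hence $o_P(\sigma_n)$ after normalization. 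The linearized statistic is an explicit quadratic form in the centered variables $A_{ij}-p_nw_{ij}$, for which the martingale differences can be written down in terms of the partially exposed degrees; one then checks (i) the conditional Lindeberg condition, which holds because each increment is bounded and touches only the two degrees $d_i,d_j$, so has small conditional variance, and (ii) that the sum of conditional variances converges in probability to $\sigma_n^2$, i.e.\ that the order-two chaos in the Hoeffding decomposition of $\mathcal{I}_n$ is negligible compared with the first order projection. An alternative route is to invoke a general CLT for non-degenerate $U$-statistics, or de Jong's theorem for quadratic chaos, directly on that Hoeffding decomposition.

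I expect the main obstacle to be the dependence between the indicator $A_{ij}$ and the two degrees $d_i,d_j$ appearing in the same summand: conditioning naively is illegitimate, so the martingale increments and their conditional variances must be expressed through the degrees restricted to the already exposed edges, and the cross terms in the conditional variance require careful bookkeeping. A second delicate point is showing that $\sigma_n^2$ is genuinely of the order of the first order projection, equivalently a matching lower bound $\sigma_n^2\gtrsim\sum_i\bar d_i\bigl(\sum_{j}p_nw_{ij}f_x(w_{i(j)},w_{j(i)})\bigr)^2$, so that no alternative normalization is forced. Finally, to see that the rate $1/(np_n)$ in the expectation cannot be improved, I would take any kernel with $f_{xx}\not\equiv0$ (for instance a power of the degree, as in the forgotten index or the second Zagreb index) together with a constant weight $W\equiv c$; then the Hessian term in the Taylor expansion is bounded below by a positive constant multiple of $1/(np_n)$, so the relative error is exactly of that order.
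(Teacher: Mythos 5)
This theorem is quoted in the paper from \cite{Yuan2023M} and is stated there \emph{without any proof}; there is no in-paper argument to compare your proposal against. Indeed, as reproduced in the paper the statement is not even self-contained: the quantity $\sigma_n^2$ is introduced but never defined, and the hypotheses on the kernel $f$ and on $p_n$ (e.g.\ $np_n\to\infty$) are omitted, so any proof must first supply these from the original source.

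Judged on its own terms, your proposal follows the natural (and, as far as the cited result goes, essentially standard) route: linearize $f(d_i,d_j)$ around the expected degrees by a Taylor expansion, control the quadratic remainder by degree concentration, and apply a martingale or Hoeffding-decomposition CLT to the resulting linear chaos in the centered indicators $A_{ij}-p_nw_{ij}$. The two delicate points you flag --- the dependence of $A_{ij}$ on the degrees $d_i,d_j$ in the same summand, and the need for a matching lower bound on $\sigma_n^2$ so that the first-order projection dominates --- are exactly the right ones. Two gaps remain, however. First, there is a normalization mismatch you gloss over: in the asserted expansion the kernel is evaluated at the \emph{weights} $w_{i(j)},w_{j(i)}$ (quantities of order one), whereas the degrees $d_i$ are of order $np_n$; reconciling $f(d_i,d_j)$ with $f(w_{i(j)},w_{j(i)})$ requires either a scaling hypothesis on $f$ or a precise definition of $w_{i(j)}$ as a rescaled conditional mean degree, and without that the claim that the Taylor expansion ``kills the first order fluctuation terms in expectation'' cannot be checked --- note also that conditioning on $A_{ij}=1$ shifts $\mathbb{E}[d_i]$ by $1$, a term of exactly the relative size $1/(np_n)$ you are trying to control. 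Second, your argument for the optimality of the rate $1/(np_n)$ is only a heuristic (a sign condition on $f_{xx}$ for a constant-weight example); to make it a proof you would need to exhibit the second-order Taylor term as an exact asymptotic, not merely an upper bound. Neither gap is fatal, but both must be closed before the sketch becomes a proof.
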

The exponential arithmetic-geometric index~\cite{Das2025BeraHj} (EAG) is a graph invariant defined for a graph $ G $ as:  
$$
EAG(G) = \sum_{v_i v_j \in E(G)} e^{\frac{d_i + d_j}{2 \sqrt{d_i d_j}}},
$$  
where $ d_i, d_j $ are the degrees of adjacent vertices $ v_i, v_j $. This index extends the arithmetic-geometric (AG) index by incorporating an exponential function, emphasizing the ratio of vertex degrees in edge contributions. This index combines the arithmetic and geometric aspects of degrees of adjacent vertices in an exponential form, serving as a topological descriptor in graph theory and mathematical chemistry.  These facilitate comparisons of EAG values under degree constraints and transformations.  The EAG index reflects structural properties through degree correlations, with extremal bounds tied to regular or near-regular graphs.

\subsection{Energy of Vertices of Subdivision Graphs}~\label{subsec2}
The adjacency matrix~\cite{Energyof2025vertices} of a graph $G$ is the square matrix $A = A(G) = [a_{ij}]$ of order $n$, in which $a_{ij} = 1$ if the vertices $v_i$ and $v_j$ are adjacent and $a_{ij} = 0$ otherwise. The characteristic polynomial of a graph $G$ is defined as $\phi(G; \lambda) = \det(\lambda I - A(G)),$
where $I$ is the identity matrix of order $n$. The eigenvalues of $G$ are called the eigenvalues of $G$ and are labeled as $\lambda_1, \lambda_2, \ldots, \lambda_n$.
\begin{definition}[\cite{Energyof2025vertices}]~\label{def1Energyof2025vertices}
The energy of a graph $G$ is defined as the sum of the absolute values of the eigenvalues of $G$ and is denoted by $\mathcal{E}(G)[\lambda]$. That is
\begin{equation}~\label{eq1Energyof2025vertices}
    \mathcal{E}(G) = \sum_{i=1}^{n} |\lambda_i|.
\end{equation}
\end{definition}
Furthermore, according to Definition~\ref{def1Energyof2025vertices}, the energy of a vertex $v_i$, denoted by $\mathcal{E}_G(v_i)$, is defined as $\mathcal{E}_G(v_i) = |A_{ii}|,$ for $i = 1, 2, \ldots, n,$
where $|A_{ii}| = \sqrt{\lambda_i}$. from~\eqref{eq1Energyof2025vertices} by adding the energies of the vertices of $G$. That is
\begin{equation}~\label{eq2Energyof2025vertices}
\mathcal{E}(G) = \sum_{i=1}^{n} \mathcal{E}_G(v_i).
\end{equation}
Consider a graph $G$ with $n$ vertices and adjacency matrix $A$ with eigenvalues $\lambda_1, \ldots, \lambda_n$. The vertex energy at vertex $v_i$ is given by the weighted sum 
\[
\mathcal{E}_G(v_i) = \sum_{j=1}^n p_{ij} |\lambda_j|,
\]
where the weights $p_{ij} = u_{ij}^2$ are squares of components of an orthogonal eigenvector matrix $U$, and satisfy 
\[
\sum_{i=1}^n p_{ij} = \sum_{j=1}^n p_{ij} = 1.
\]
Moreover, the $k$-th moment $\phi_i(A^k)$ with respect to $v_i$ equals the weighted sum 
\[
\phi_i(A^k) = \sum_{j=1}^n p_{ij} \lambda_j^k,
\]
which corresponds to the number of closed walks of length $k$ starting at $v_i$. For bipartite graphs, the total vertex energy across the partite sets is equal, and for $r$-regular graphs, the characteristic polynomial of their subdivision graph relates to the original graph by 
\[
\phi(S(G): \lambda) = \lambda^{m - n} \phi(G: \lambda^2 - r),
\]
where $m$ and $n$ are the numbers of edges and vertices, respectively. Specializing to the complete graph $K_n$ with $n \geq 2$, the subdivision graph $S(K_n)$ has vertex set 
\[
\{v_1, v_2, \ldots, v_n, s_1, s_2, \ldots, s_{\frac{n(n-1)}{2}}\},
\]
where $v_i$ are original vertices and $s_j$ are subdivided vertices. The vertex energy at each original vertex is 
\[
\mathcal{E}_{S(K_n)}(v_i) = \frac{(n-1)\sqrt{n-2} + \sqrt{2(n-1)}}{n},
\]
while at each subdivided vertex it is 
\[
\mathcal{E}_{S(K_n)}(s_j) = \frac{2 ((n-1) \sqrt{n-2} + \sqrt{2(n-1)})}{n (n-1)}.
\]
The trace norm (Sombor energy~\cite{RatherImran2024Diene}) of $S(G)$ is: 
\[
\mathcal{E}(S(G)) = \sum_{i=1}^{n} |\mu_i|.
\]
as the sum of the absolute values of $ \mu_i $ for $ i = 1 $ to $ n $.
\begin{theorem}[\cite{RatherImran2024Diene}]
Let $M$ be a real symmetric matrix of order $n$ and $Q$ be its quotient matrix of order $m$, $(n > m)$. Then the following hold.

\begin{itemize}
    \item[(i)] If the partition $\mathscr{D}$ of $I$ of matrix $M$ is not equitable, then the eigenvalues of $Q$ interlace the eigenvalues of $M$, that is
    $\lambda_i(M) \geq \lambda_i(Q) \geq \lambda_{i+n-m}(M), \quad \text{for } i = 1, 2, \dots, m.$
    \item[(ii)] If the partition $\mathscr{D}$ of $I$ of matrix $M$ is equitable, then the spectrum of $Q$ is contained in the spectrum of $M$.
\end{itemize}
\end{theorem}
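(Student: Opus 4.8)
The plan is to realize the quotient matrix $Q$ as an orthogonal compression of $M$ and then invoke the classical Cauchy interlacing theorem for principal submatrices. First I would fix notation: write the partition of the index set $I=\{1,\dots,n\}$ as $I=V_1\cup\cdots\cup V_m$ with $|V_i|=n_i$, and let $P\in\mathbb{R}^{n\times m}$ be its characteristic matrix, $P_{vi}=1$ if $v\in V_i$ and $0$ otherwise. By the definition of the quotient matrix, $DQ=P^{\mathsf{T}}MP$, where $D=\operatorname{diag}(n_1,\dots,n_m)$. Setting $\widehat S:=PD^{-1/2}$ (so that $\widehat S^{\mathsf{T}}\widehat S=I_m$), one obtains the \emph{symmetric} matrix $\widehat Q:=\widehat S^{\mathsf{T}}M\widehat S=D^{-1/2}(P^{\mathsf{T}}MP)D^{-1/2}=D^{1/2}QD^{-1/2}$, which is similar to $Q$; hence $Q$ and $\widehat Q$ have the same real eigenvalues $\lambda_1(Q)\ge\cdots\ge\lambda_m(Q)$, and it suffices to prove both assertions with $\widehat Q$ in place of $Q$.

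For part (i), complete the columns of $\widehat S$ to an orthonormal basis of $\mathbb{R}^n$, giving an orthogonal matrix $U=[\,\widehat S\mid T\,]$. Then $U^{\mathsf{T}}MU$ is orthogonally similar to $M$ (so it shares its spectrum) and has $\widehat Q$ as its leading $m\times m$ principal submatrix. Applying the Courant--Fischer min--max formula and restricting the competing $i$-dimensional subspaces to lie in the span of the first $m$ standard basis vectors yields $\lambda_i(M)\ge\lambda_i(\widehat Q)$ for $i=1,\dots,m$; applying the same to $-M$ (equivalently, using the max--min form) yields $\lambda_i(\widehat Q)\ge\lambda_{i+n-m}(M)$. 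Together these give
\[
\lambda_i(M)\ \geq\ \lambda_i(Q)\ \geq\ \lambda_{i+n-m}(M),\qquad i=1,\dots,m,
\]
and I note that this step uses no hypothesis on the partition --- interlacing holds for an arbitrary partition, equitable or not.

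For part (ii), recall that the partition is equitable precisely when, for each $v\in V_i$, the block row sum $\sum_{w\in V_j}M_{vw}$ depends only on $i$ and therefore equals the quotient entry $Q_{ij}$; in matrix form this is $MP=PQ$, equivalently $M\widehat S=\widehat S\widehat Q$. Thus $\operatorname{col}(\widehat S)$ is $M$-invariant, so $T^{\mathsf{T}}M\widehat S=T^{\mathsf{T}}\widehat S\widehat Q=0$ and $U^{\mathsf{T}}MU=\widehat Q\oplus W$ is block diagonal with $W:=T^{\mathsf{T}}MT$; hence $\operatorname{spec}(Q)=\operatorname{spec}(\widehat Q)\subseteq\operatorname{spec}(M)$ (concretely, any eigenvector $y$ of $\widehat Q$ lifts to the nonzero vector $\widehat S y$, an eigenvector of $M$ for the same eigenvalue). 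I expect no serious obstacle here; the only point demanding care is the reconciliation in the first paragraph between the combinatorial definition of $Q$ (averaged block row sums, which is not a symmetric matrix in general) and the symmetric compression $\widehat S^{\mathsf{T}}M\widehat S$ via the diagonal similarity $X\mapsto D^{1/2}XD^{-1/2}$ --- without this the interlacing inequalities would not even be meaningful for $Q$ itself --- after which Steps 2 and 3 are the standard Cauchy interlacing argument and the standard characterization of equitable partitions.
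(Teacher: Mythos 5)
Your proof is correct, and it is the standard argument for this result (essentially Cauchy interlacing applied to the orthogonal compression $\widehat S^{\mathsf{T}}M\widehat S$, as in Brouwer--Haemers-style treatments of quotient matrices). There is nothing in the paper to compare it against: the statement is quoted verbatim from \cite{RatherImran2024Diene} and no proof is given in this manuscript, so your write-up supplies an argument the paper omits entirely. All the individual steps check out: $DQ=P^{\mathsf{T}}MP$ with $D=P^{\mathsf{T}}P$, the similarity $\widehat Q=D^{1/2}QD^{-1/2}$ making the compression symmetric, Cauchy interlacing for the leading principal submatrix of $U^{\mathsf{T}}MU$, and the characterization $MP=PQ$ of equitable partitions giving the invariant subspace in part (ii). Your side remark is also well taken and worth keeping: interlacing in part (i) holds for an arbitrary partition, so the hypothesis ``not equitable'' in the quoted statement is not actually needed for the inequalities to hold --- it only signals that in the equitable case the stronger conclusion (ii) is available. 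The one presentational caution is that the theorem as stated never defines the quotient matrix $Q$ or the partition $\mathscr{D}$ of $I$, so your opening paragraph fixing $Q=D^{-1}P^{\mathsf{T}}MP$ is doing necessary work and should not be trimmed.
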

\section{Some Novel Degree-Based Topological Indices}~\label{sec3}
The study of topological indices plays an important role in graph theory due to their impact on graphs, trees, and networks, as well as their influence on other mathematical, physical, and chemical sciences, especially caterpillar trees. In paper~\cite{J-D-N2}, we presented the Albertson index as part of our participation in the $66^{th}$ conference held at the Moscow Institute of Physics and Technology. This paper served as a fundamental basis on which we relied to provide further results until paper~\cite{J-D-N3} was prepared, which detailed the Albertson index across trees according to a degree sequence. Through this paper, we review the literature we have worked on during the study period, present some discussed results, and introduce and explain several other topological indices.\par 
In the field of chemical molecular graphs, the atoms are represented by vertices and the bonds by edges that capture the structural essence of compounds. The numerical representation of the molecule graph can be mathematically deduced as a single number, usually called graph invariant or topological index~\cite{LiuWang2019Wang}. The first Zagreb index $M_1(G)$ is the sum of the square of degrees of all the atoms and the second Zagreb index $M_2(G)$ is the sum over all bonds of the product of the vertex degrees of the two adjacent atoms, that is, for any graph $G=(V, E)$ with vertex set $V(G)$ and edge set $E(G)$,
$$
M_1(G)=\sum_{u \in V(G)} d(u)^2, \quad M_2(G)=\sum_{u v \in E(G)} d(u) d(v).
$$
The function associated with the ${ }^{m} M_{2}$ index is reversed to the second Zagreb index~\cite{LiuWang2019Wang,Kumar2024Dasvc,Gut2023man,Yuan2023M}. It is defined by considering the forgotten index $F(G)$ and proposed it as 
\[
{ }^{m} M_{2}(G)=\sum_{u v \in E(G)} \frac{1}{d_{G}(u) d_{G}(v)}, \quad F(G)=\sum_{v \in V(G)} d_{G}(v)^{3}=\sum_{u v \in E(G)}\left(d_{G}(u)^{2}+d_{G}(v)^{2}\right).
\]
The second Zagreb index is a well-known topological index in graph theory, particularly useful in mathematical chemistry. For a simple graph $G$ with vertex degrees $d_i$ and edge set $E(G)$, the second Zagreb index of $G$, often denoted $M_2(G)$, is defined as the sum over all edges of the product of the degrees of the endpoints of each edge. Mathematically, it is given by:
\[
M_2(G) = \sum_{uv \in E(G)} d(u) \cdot d(v),
\]
where $u$ and $v$ are adjacent vertices, and $d(u)$, $d(v)$ denote their degrees. The second Multiplicative Zagreb indices of $k$-tree. The first (generalized) and second Multiplicative Zagreb indices are defined as follows: For $c>0$,
$$
\Pi_{1, c}(G)=\prod_{u \in V(G)} d(u)^c, \quad \Pi_2(G)=\prod_{u v \in E(G)} d(u) d(v)
$$
The second Multiplicative Zagreb index can be rewritten as $\Pi_2(G)=$ $\prod_{u \in V(G)} d(u)^{d(u)}$. Narumi and Katayama considered the product~\cite{Gut2023man} of vertex degrees

$$
N K(G)=\prod_{v} d_{v}(G)
$$
Then, 
\[
\mathscr{D}_{1}(G)=\prod_{v} d_{v}(G)^{2}, \quad \mathscr{D}_{2}(G)=\prod_{u \sim v} d_{u}(G) d_{v}(G), \quad
\mathscr{D}_{1}^{*}(G)=\prod_{u \sim v}\left[d_{u}(G)+d_{v}(G)\right].\]
\begin{lemma}[\cite{LiuWang2019Wang}]
Let $G$ be a connected graph with an edge e and $G^{\prime}$ be obtained from $G$ by deleting the edge e. If $G^{\prime}$ is connected, then
$$
M_1\left(G^{\prime}\right)<M_1(G), \quad M_2\left(G^{\prime}\right)<M_2(G), \quad \Pi_{1, c}\left(G^{\prime}\right)<\Pi_{1, c}(G), \quad \Pi_2\left(G^{\prime}\right)<\Pi_2(G).
$$
\end{lemma}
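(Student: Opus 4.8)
The plan is to isolate exactly what changes when one edge is removed. Write $e = uv$. Deleting $e$ leaves the vertex set untouched and alters precisely two degrees, namely $d_{G'}(u) = d_G(u) - 1$ and $d_{G'}(v) = d_G(v) - 1$, with $d_{G'}(w) = d_G(w)$ for every vertex $w \notin \{u,v\}$. The one structural input I would lean on is the hypothesis that $G'$ is connected: since $G'$ has the same (at least two) vertices as $G$, connectedness forces its minimum degree to be at least $1$, hence $d_G(u), d_G(v) \geq 2$. This single fact is what upgrades each ``$\leq$'' below to a strict ``$<$'' and keeps every denominator and base that appears nonzero.

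The two ``easy'' inequalities only involve the two changed quantities. For $M_1$ I would compute directly
\[
M_1(G) - M_1(G') = \bigl(d_G(u)^2 - (d_G(u)-1)^2\bigr) + \bigl(d_G(v)^2 - (d_G(v)-1)^2\bigr) = 2d_G(u) + 2d_G(v) - 2 \geq 6 > 0 .
\]
For $\Pi_{1,c}$, only two factors of the product change, so
\[
\frac{\Pi_{1,c}(G)}{\Pi_{1,c}(G')} = \left(\frac{d_G(u)}{d_G(u)-1}\right)^{c}\left(\frac{d_G(v)}{d_G(v)-1}\right)^{c},
\]
and since $\tfrac{t}{t-1} > 1$ whenever $t \geq 2$ and the exponent $c > 0$, this ratio exceeds $1$.

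For the edge-based indices I would decompose the defining sum (resp. product) according to whether an edge meets $\{u,v\}$. For $M_2$: the edge $e$ contributes $d_G(u)d_G(v)$ to $M_2(G)$ and nothing to $M_2(G')$; each other edge $uw$ with $w \in N_G(u)\setminus\{v\}$ loses exactly $d_G(w)$, and symmetrically for edges $vw$; all remaining edges are unaffected. Hence
\[
M_2(G) - M_2(G') = d_G(u)\,d_G(v) + \sum_{w \in N_G(u)\setminus\{v\}} d_G(w) + \sum_{w \in N_G(v)\setminus\{u\}} d_G(w) > 0 .
\]
For $\Pi_2$ I would use the identity $\Pi_2(G) = \prod_{w \in V(G)} d(w)^{d(w)}$ already quoted in the text, so that
\[
\frac{\Pi_2(G)}{\Pi_2(G')} = \frac{d_G(u)^{d_G(u)}}{(d_G(u)-1)^{d_G(u)-1}} \cdot \frac{d_G(v)^{d_G(v)}}{(d_G(v)-1)^{d_G(v)-1}},
\]
and then invoke the strict monotonicity of $x \mapsto x^x$ on $[1,\infty)$ (its derivative $x^x(\ln x + 1)$ is positive there) together with $d_G(u), d_G(v) \geq 2$ to conclude each factor exceeds $1$.

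The ``main obstacle'' here is nothing deep, only careful bookkeeping in the last two cases: one must check that a common neighbour $w$ of $u$ and $v$ is correctly counted \emph{twice} in the $M_2$ difference (once for the edge $uw$, once for $vw$, both of which change), and one must apply the connectivity of $G'$ at precisely the step where it is needed — to guarantee $d_G(u) - 1 \geq 1$ so that the rational expressions and the base $d_G(u)-1$ are legitimate and the inequalities are strict. Once those points are handled, every claimed inequality follows by the direct computations sketched above, and equality in any of them would force $u$ or $v$ to have had degree $1$ in $G$, contradicting connectedness of $G'$.
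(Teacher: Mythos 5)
Your proof is correct. Note that the paper itself does not prove this lemma --- it is quoted verbatim from the cited reference \cite{LiuWang2019Wang} with no argument supplied --- so there is no in-paper proof to compare against; your write-up is the standard self-contained argument. All the key points are handled properly: the hypothesis that $G'$ is connected (on the same vertex set, which has at least two vertices since $G$ has an edge) is used exactly where it is needed to get $d_G(u), d_G(v)\geq 2$, which makes the denominators $d_G(u)-1$, $d_G(v)-1$ positive and every inequality strict; the $M_2$ decomposition correctly double-counts a common neighbour of $u$ and $v$; and the $\Pi_2$ case correctly exploits the identity $\Pi_2(G)=\prod_{w\in V(G)} d(w)^{d(w)}$ together with the strict monotonicity of $x\mapsto x^x$ on $[1,\infty)$.
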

Let $G \in \mathcal{G}_n$ with $n$ is odd, if $M_1(G)  \leq n(n-1)^2$ and $\Pi_{1, c}(G) \leq(n-1)^{c n}.$ Then
$$ M_2(G) \leq \frac{n(n-1)^3}{2}, \quad \Pi_2(G) 
\leq(n-1)^{n(n-1)}
$$
where the equalities hold if and only if $G \cong K_n$. Assume $G \in \mathcal{G}_n$ with $n$ is even. If $M_1(G)  \leq n(n-2)^2$ and $\Pi_{1, c}(G) \leq(n-2)^{c n}$. Then
$$
M_2(G) \leq \frac{n(n-2)^3}{2}, \quad \Pi_2(G) 
\leq(n-2)^{n(n-2)}
$$
where the equalities hold if and only if $G \cong C P_n$.
\begin{lemma}[\cite{LiuWang2019Wang}]
Let $G$ be a connected graph, possessing (as subgraph) a cycle $Z$ of size $t$. Let $e_1, e_2, \ldots, e_t$ be the edges of $G$ belonging to $Z$. Let $G^{\prime}$ be obtained from $G$ by deleting the edges $e_1, e_2, \ldots, e_t$. If $G^{\prime}$ is connected, then
$M_1\left(G^{\prime}\right)<M_1(G), \quad M_2\left(G^{\prime}\right)<M_2(G), \quad \Pi_{1, c}\left(G^{\prime}\right)<\Pi_{1, c}(G), \quad \Pi_2\left(G^{\prime}\right)<\Pi_2(G)$.
\end{lemma}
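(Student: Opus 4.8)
The plan is to argue directly from the way the deletion changes the degree sequence, rather than trying to iterate the single‑edge deletion lemma stated just above: removing the cycle edges $e_1,\dots,e_t$ one at a time may well pass through disconnected intermediate graphs, so the hypothesis of that lemma would fail. Since all four invariants $M_1,M_2,\Pi_{1,c},\Pi_2$ are monotone under shrinking vertex degrees, a one‑shot degree comparison is both simpler and cleaner.

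First I would record the exact degree change. Write $V(Z)=\{u_1,\dots,u_t\}$ with $e_i=u_iu_{i+1}$, indices read modulo $t$. Every vertex of a cycle is incident to exactly two of its edges, so $d_{G'}(u_i)=d_G(u_i)-2$ for each $i$, while $d_{G'}(v)=d_G(v)$ for every $v\notin V(Z)$. Because $G$ contains a cycle it has at least three vertices, hence so does $G'$; since $G'$ is connected, every vertex of $G'$ has degree at least $1$, and in particular $d_G(u_i)\ge 3$ for all $i$. This is the one place the connectedness of $G'$ is used, and it guarantees that the multiplicative indices never degenerate.

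For the vertex‑based invariants only the $u_i$ contribute a change: $M_1(G)-M_1(G')=\sum_{i=1}^{t}\bigl(d_G(u_i)^2-(d_G(u_i)-2)^2\bigr)=4\sum_{i=1}^{t}\bigl(d_G(u_i)-1\bigr)>0$, and $\Pi_{1,c}(G)/\Pi_{1,c}(G')=\prod_{i=1}^{t}\bigl(d_G(u_i)/(d_G(u_i)-2)\bigr)^{c}$, a product of factors each strictly larger than $1$ (numerator exceeds a positive denominator and $c>0$), hence $>1$. For the edge‑based invariants I would split $E(G)=E(G')\sqcup\{e_1,\dots,e_t\}$. On each edge $xy\in E(G')$ one has $1\le d_{G'}(x)\le d_G(x)$ and likewise for $y$, so the $E(G')$‑part of $M_2(G)$ (resp. of $\Pi_2(G)$) is at least the corresponding part of $M_2(G')$ (resp. of $\Pi_2(G')$), while the deleted edges add the strictly positive term $\sum_{i=1}^{t}d_G(u_i)d_G(u_{i+1})$ (resp. multiply in the factor $\prod_{i=1}^{t}d_G(u_i)d_G(u_{i+1})\ge 9^{t}>1$). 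Combining the two parts yields $M_2(G')<M_2(G)$ and $\Pi_2(G')<\Pi_2(G)$.

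There is essentially no hard obstacle here: the only delicate points are the bookkeeping that each cycle vertex loses exactly two incident edges, and the use of connectivity of $G'$ to keep all degrees positive so that the comparisons for $\Pi_{1,c}$ and $\Pi_2$ stay strict. Everything else is the elementary monotonicity of the four descriptors, exactly as in the preceding single‑edge lemma.
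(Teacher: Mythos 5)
Your proof is correct. Note first that the paper itself supplies no argument for this statement: it is quoted with a citation to \cite{LiuWang2019Wang}, so the only internal comparison is with the single-edge deletion lemma stated immediately before it, which is the tool the original source iterates. Your direct degree-count is a valid self-contained alternative: each vertex of the cycle $Z$ is incident to exactly two of the edges $e_1,\dots,e_t$, so $d_{G'}(u_i)=d_G(u_i)-2$ and all other degrees are unchanged; connectivity of $G'$ (which has the same vertex set as $G$, of size at least $3$) gives $d_{G'}(u_i)\ge 1$, hence $d_G(u_i)\ge 3$, and the four monotonicity computations for $M_1$, $M_2$, $\Pi_{1,c}$, $\Pi_2$ all go through with strict inequality. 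One remark: the obstacle you cite as your reason for avoiding the iterative route does not actually exist. Every intermediate graph $G-\{e_1,\dots,e_k\}$ contains $G'$ as a connected spanning subgraph and is therefore itself connected, so the preceding single-edge lemma could simply be applied $t$ times. Your one-shot argument is nonetheless a reasonable choice --- it makes the total degree change explicit and avoids re-verifying the connectivity hypothesis at each step --- but the justification for preferring it should be convenience, not necessity.
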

 Let $G \in \mathcal{G}_n$ of order $n$. Then, $M_1(G) \geq 4 n,  M_2(G) \geq 4 n, \Pi_{1, c}(G) \geq 2^{c n}, \Pi_2(G) \geq 4^n,$
where the equalities hold if and only if $G \cong C_n$.
The \emph{sum connectivity index} $\operatorname{SCI}$ of a graph $G$ is defined as the sum over all edges of the reciprocal square root of the sum of degrees of the endpoints:
\[
\operatorname{SCI}(G) = \sum_{uv \in E(G)} \frac{1}{\sqrt{d(u) + d(v)}},
\]
where $u$ and $v$ are adjacent vertices, and $d(u)$, $d(v)$ denote their degrees. This index has been studied in chemical graph theory as an indicator of molecular branching and related structural properties. The \emph{symmetric division degree index} $\operatorname{SDD}$ of a simple connected graph $G$ is defined as
\[
\operatorname{SDD}(G) = \sum_{uv \in E(G)} \left(\frac{d(u)}{d(v)} + \frac{d(v)}{d(u)}\right),
\]
where $d(u)$ and $d(v)$ denote the degrees of the vertices $u$ and $v$, respectively. This index captures the symmetry of the division between degrees of adjacent vertices and is used in chemical graph theory and related graph studies to analyze structural properties of molecules and networks. The \emph{inverse sum indeg index} (ISI) of a simple connected graph $G$ is defined as
\[
\operatorname{ISI}(G) = \sum_{uv \in E(G)} \frac{d(u) \cdot d(v)}{d(u) + d(v)},
\]
where $d(u)$ and $d(v)$ denote the degrees of vertices $u$ and $v$, respectively. This index has applications in chemical graph theory, especially as a predictor for molecular properties such as total surface area. The \emph{harmonic index} of a graph $G$, denoted by $H(G)$, is defined as
\[
H(G) = \sum_{uv \in E(G)} \frac{2}{d(u) + d(v)},
\]
where $d(u)$ and $d(v)$ denote the degrees of the vertices $u$ and $v$, respectively. This index has applications in chemistry and graph theory, measuring connectivity in molecular structures and networks. The relationship between the harmonic index and the Wiener index is more subtle since the harmonic index depends on vertex degrees of edges, and the Wiener index depends on distances between all vertex pairs. However, some studies explore bounds and inequalities relating these two indices under certain graph conditions. Typically, graphs with larger average distances (higher Wiener index) tend to have vertices with smaller degrees, which can influence the harmonic index. For example, trees with given orders have known extremal values for both indices, and patterns show connections via graph structure parameters (like degree sequences, diameter, and connectivity). Both are graph invariants used in chemical graph theory and network analysis, with some known bounds connecting them via graph parameters. The \emph{atom-bond connectivity} $\operatorname{ABC}$ index of a graph $G$ is defined as
$$
\operatorname{ABC}(G) = \sum_{uv \in E(G)} \sqrt{\frac{d(u) + d(v) - 2}{d(u) d(v)}},
$$
where $d(u)$ and $d(v)$ denote the degrees of vertices $u$ and $v$, respectively. While the Wiener index incorporates global distance information of the graph, the ABC index involves only local degree information of adjacent vertices. Researchers have studied relationships and bounds involving these indices for certain classes of graphs, and variants of the ABC index have been developed that incorporate distance-based concepts, such as the Graovac-Ghorbani index. The \emph{augmented Zagreb index} $\operatorname{AZI}$ of a graph $G$ is a topological index designed to capture molecular branching and structural information more sensitively than some earlier indices. It is especially important in the study of trees (connected acyclic graphs) because these often model chemical compounds such as alkanes, where the branching structure significantly influences chemical properties, it is defined as
\[
\operatorname{AZI}(G) = \sum_{uv \in E(G)} \left( \frac{d(u) d(v)}{d(u) + d(v) - 2} \right)^3.
\]
 This index is used in chemical graph theory to analyze molecular structure properties and has been found to correlate well with certain physicochemical characteristics. It reacts strongly to changes in vertex degrees, thus effectively distinguishing between different branching patterns in trees, which is critical in chemical graph theory to correlate with molecular properties like stability and reactivity. Studies have shown $\operatorname{AZI}$ correlates well with physicochemical properties in molecular trees such as boiling points, enthalpies, and structural strain, sometimes outperforming other indices. It possesses interesting extremal combinatorial properties, useful for theoretical investigations in graph theory and chemical informatics. The \emph{first hyper-Zagreb index} of a graph $G$, denoted by $HM_1(G)$, is a degree-based topological index widely used in chemical graph theory. It is defined as the sum over all edges of the square of the sum of the degrees of the endpoints of each edge. More formally, if $E(G)$ denotes the edge set of $G$ and $d(u)$ and $d(v)$ are the degrees of the adjacent vertices $u$ and $v$, then

\[
HM_1(G) = \sum_{uv \in E(G)} (d(u) + d(v))^2.
\]

This index generalizes the first Zagreb index by considering the square of the sum of the vertex degrees instead of just their sum. The first hyper-Zagreb index has been found useful in characterizing molecular graphs and correlating structural properties with chemical and physical attributes of the compounds they represent. The \emph{second hyper-Zagreb index} of a graph $G$, denoted by $\mathrm{HM}_2(G)$, is a degree-based graph invariant defined as the sum over all edges of the squares of the products of degrees of the edge endpoints. More precisely, for an edge $uv \in E(G)$ with degrees $d(u)$ and $d(v)$, it is given by
\[
\mathrm{HM}_2(G) = \sum_{uv \in E(G)} \left(d(u) \cdot d(v)\right)^2,
\]
where $d(u)$ and $d(v)$ denote the degrees of vertices $u$ and $v$, respectively. This index is a variant of the classical Zagreb indices and is useful in mathematical chemistry for characterizing molecular structures. The geometric-arithmetic index $GA$, the arithmetic-geometric index $AG$ and defined~\cite{Kumar2024Dasvc} it as

$$
G A(G)=\sum_{u v \in E(G)} \frac{2 \sqrt{d_{G}(u) d_{G}(v)}}{d_{G}(u)+d_{G}(v)}, \quad A G(G)=\sum_{u v \in E(G)} \frac{d_{G}(u)+d_{G}(v)}{2 \sqrt{d_{G}(u) d_{G}(v)}} .
$$
By the arithmetic-geometric mean~\cite{Ghalavand2023AshrafiAD} inequality,
\[
\frac{\left(d_{1}+r\right)+\left(d_{2}+r\right)+\cdots+\left(d_{n}+r\right)}{n} \geqslant \sqrt[n]{\left(d_{1}+r\right)\left(d_{2}+r\right) \cdots\left(d_{n}+r\right)},
\]
with inequality if and only if $d_{1}=d_{2}=\cdots=d_{n}$. Thus,
$$
\frac{n^{n}\left(d_{1}+r\right)\left(d_{2}+r\right) \cdots\left(d_{n}+r\right)}{(2 m+r n)^{n}} \leqslant 1
$$
with equality if and only if $G$ is regular.
\begin{corollary}[\cite{Ghalavand2023AshrafiAD}]
If $T$ is an $n-v e r t e x$ tree. Then,
\begin{align*}
& 1-n^{n}(r+2)^{n-2}(1+r)^{2}(r n+2 n-2)^{-n} \leqslant \mathrm{I}_{A G}(T) \\
\leqslant & 1-n^{n}(1+r)^{n-1}(n-1+r)(r n+2 n-2)^{-n}
\end{align*}
\end{corollary}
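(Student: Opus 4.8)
The plan is to reduce the two-sided bound to an extremal problem for the vertex-degree product
\[
\Pi_r(T) := \prod_{i=1}^{n}(d_i + r)
\]
over all $n$-vertex trees $T$. From the arithmetic--geometric-mean computation immediately preceding the statement, the underlying index has the closed form
\[
\mathrm{I}_{AG}(G) = 1 - \frac{n^{n}\,\Pi_r(G)}{(2m + rn)^{n}},
\]
and specializing to a tree ($m = n-1$) turns the denominator into the constant $(rn + 2n - 2)^{n}$. Hence $\mathrm{I}_{AG}(T)$ is a strictly decreasing function of $\Pi_r(T)$, so the desired lower bound on $\mathrm{I}_{AG}(T)$ is equivalent to $\Pi_r(T) \leq \Pi_r(P_n)$ and the desired upper bound is equivalent to $\Pi_r(T) \geq \Pi_r(S_n)$, where $P_n$ is the path and $S_n = K_{1,n-1}$ the star.

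To locate these extremizers I would argue by majorization. Taking logarithms, $\ln \Pi_r(T) = \sum_{i=1}^{n}\varphi(d_i)$ with $\varphi(x) = \ln(x+r)$, which is strictly concave on the admissible range since $\varphi''(x) = -(x+r)^{-2} < 0$ and every factor $d_i + r$ is positive. The combinatorial input is that, among all nonincreasing integer sequences of length $n$ with entries $\geq 1$ and sum $2n-2$ (exactly the tree degree sequences, since any such sequence is realizable by a tree), the star sequence $(n-1,1,\dots,1)$ is the unique maximum and the path sequence $(2,\dots,2,1,1)$ the unique minimum in the majorization order. Using the Proposition above, any tree degree sequence sits in a chain of unit transfers $d_j \mapsto d_j+1,\ d_k \mapsto d_k-1$ with $j<k$ connecting it to $S_n$ (resp. to $P_n$); for one such transfer, writing $a = d_j + r \geq b = d_k + r$, one has $(a+1)(b-1) - ab = b - a - 1 < 0$, so each transfer toward the star strictly decreases $\Pi_r$ while each transfer toward the path strictly increases it. This gives $\Pi_r(S_n) \leq \Pi_r(T) \leq \Pi_r(P_n)$, with strict inequalities unless $T$ is the star or the path respectively. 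Equivalently, one may invoke the concave form of Karamata's inequality for $\sum \varphi(d_i)$, which yields both comparisons in one step.

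It remains to evaluate the two extreme products: $\Pi_r(S_n) = (n-1+r)(1+r)^{n-1}$ and $\Pi_r(P_n) = (r+2)^{n-2}(1+r)^2$. Substituting these into the closed form for $\mathrm{I}_{AG}$ and using $2m + rn = rn + 2n - 2$ reproduces exactly the lower bound $1 - n^{n}(r+2)^{n-2}(1+r)^{2}(rn+2n-2)^{-n}$ and the upper bound $1 - n^{n}(1+r)^{n-1}(n-1+r)(rn+2n-2)^{-n}$ of the statement; the left equality holds iff $T \cong P_n$ and the right equality iff $T \cong S_n$ (for $n \geq 4$; for $n \in \{2,3\}$ the star and the path coincide and the two bounds collapse to a single value).

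The step I expect to be the main obstacle is the careful handling of the majorization chain: one must make sure the intermediate sequences produced by the Proposition stay within the class of tree degree sequences --- nonincreasing, positive-integer, sum $2n-2$ --- and that the ordering $d_j \geq d_k$ needed for the sign computation is maintained along the chain, since the Proposition as quoted only guarantees a chain of \emph{graphic} sequences. The cleanest way around this is to verify once and for all that the dominance order on tree degree sequences is the dominance lattice on partitions of $2n-2$ into exactly $n$ positive parts, whose top and bottom elements are precisely $(n-1,1,\dots,1)$ and $(2,\dots,2,1,1)$, and then apply the concave Karamata inequality to $\sum \varphi(d_i)$ directly, bypassing the transfer-by-transfer bookkeeping entirely.
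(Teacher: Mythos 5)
Your proposal is correct. The paper itself gives no proof of this corollary --- it is imported verbatim from the cited reference --- but the argument you give is exactly the one the surrounding text sets up: the display immediately before the statement identifies $\mathrm{I}_{AG}(G)$ with $1 - n^{n}\prod_i(d_i+r)\,(2m+rn)^{-n}$, and your reduction to extremizing $\prod_i(d_i+r)$ over partitions of $2n-2$ into $n$ positive parts (which the introduction confirms are precisely the tree degree sequences), with the star and path as the top and bottom of the dominance order and Karamata applied to the concave function $\ln(x+r)$, is the standard and correct completion. Your closing worry about the transfer-by-transfer bookkeeping is well placed but, as you note, dissolves once one works directly in the dominance lattice of partitions rather than through the quoted Proposition on graphic sequences; the sign computation $(a+1)(b-1)-ab=b-a-1<0$ and the final evaluations $\Pi_r(S_n)=(n-1+r)(1+r)^{n-1}$, $\Pi_r(P_n)=(r+2)^{n-2}(1+r)^{2}$ all check out against the stated bounds.
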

\begin{corollary}[\cite{Ghalavand2023AshrafiAD}]~\label{corsomen1}
Let $U$ be a unicyclic graph of order $n \geqslant 4$. Then,
$$
0 \leqslant \mathrm{I}_{A G}(U) \leqslant 1-n^{n}(1+r)^{n-3}(r+2)^{2}(n-1+r)((r+2) n)^{-n}
$$
\end{corollary}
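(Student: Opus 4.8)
The plan is to reduce both inequalities to bounds on the single quantity $P(U):=\prod_{v\in V(U)}(d_v+r)$, where $r$ is the parameter appearing in the arithmetic--geometric mean discussion above (so that $\mathrm{I}_{AG}(G)=1-n^{n}\prod_{v}(d_v+r)(2m+rn)^{-n}$), and then to identify the extremal unicyclic degree sequence. For a unicyclic graph $U$ one has $m=n$, hence $2m+rn=(r+2)n$, so $\mathrm{I}_{AG}(U)=1-(r+2)^{-n}P(U)$. Consequently the lower bound $\mathrm{I}_{AG}(U)\geqslant 0$ is exactly $P(U)\leqslant (r+2)^{n}$, and the upper bound is exactly $P(U)\geqslant (1+r)^{n-3}(r+2)^{2}(n-1+r)$.

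The lower bound is immediate: it is the arithmetic--geometric mean inequality recalled just before the tree corollary, applied with $2m+rn=(r+2)n$, which yields $n^{n}P(U)\big((r+2)n\big)^{-n}=P(U)(r+2)^{-n}\leqslant 1$, with equality iff all degrees coincide, i.e. iff $U=C_{n}$ (the only $2$-regular, hence only regular, unicyclic graph), for which indeed $\mathrm{I}_{AG}(C_{n})=0$.

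For the upper bound I would establish the sharper combinatorial claim that $P$ is minimised over all $n$-vertex unicyclic graphs by $U^{*}$, the graph obtained from $K_{1,n-1}$ by adding one edge between two of its leaves (equivalently, a triangle with $n-3$ pendant edges attached at one vertex), whose degree sequence is $D^{*}=(n-1,2,2,1,\dots,1)$ and for which $P(U^{*})=(n-1+r)(2+r)^{2}(1+r)^{n-3}$, exactly the asserted value. Since $x\mapsto\ln(x+r)$ is strictly concave on $[1,\infty)$, Karamata's inequality shows that among the non-increasing degree sequences of $n$-vertex unicyclic graphs, $\sum_{v}\ln(d_v+r)$ — and hence $P$ — is smallest for the sequence that is greatest in the majorization order. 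So it suffices to check $D\trianglelefteq D^{*}$ for every unicyclic degree sequence $D=(d_1\geqslant\cdots\geqslant d_n)$ with $d_n\geqslant 1$ and $\sum_i d_i=2n$. Writing $s_k=d_1+\cdots+d_k$ and noting that the partial sums of $D^{*}$ are $s_1^{*}=n-1$, $s_2^{*}=n+1$ and $s_k^{*}=n+k$ for $3\leqslant k\leqslant n$: the case $k=1$ is $d_1\leqslant n-1$ (true since $U$ is simple), and the cases $k\geqslant 3$ follow from $s_k=2n-\sum_{i>k}d_i\leqslant 2n-(n-k)=n+k=s_k^{*}$, using $d_i\geqslant 1$. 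The only nontrivial inequality is $d_1+d_2\leqslant n+1$.

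This last step is the crux, and is where the cycle structure (absent in the tree analogue) enters. Suppose $u,v$ realise $d_1,d_2$ and $d_1+d_2\geqslant n+2$. Counting neighbourhoods inside $V\setminus\{u,v\}$, which has $n-2$ vertices, gives $|N(u)\setminus\{v\}|+|N(v)\setminus\{u\}|\geqslant d_1+d_2-2\geqslant n$, so $u$ and $v$ have at least two common neighbours when $uv\in E(U)$, and at least four when $uv\notin E(U)$; in either case the edges joining $u$ and $v$ to these common neighbours (together with $uv$ in the adjacent case) form a subgraph on at most six vertices containing two independent cycles, forcing the cyclomatic number of $U$ to be at least $2$ and contradicting unicyclicity. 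Hence $d_1+d_2\leqslant n+1$, the majorization $D\trianglelefteq D^{*}$ holds, and $P(U)\geqslant P(U^{*})$ gives the upper bound. The main obstacle is precisely this structural estimate $d_1+d_2\leqslant n+1$; everything else is the arithmetic--geometric mean inequality, trivial partial-sum bounds, and Karamata's inequality. A transformation-based alternative — iteratively relocating a pendant edge, and then a non-pendant edge, toward the maximum-degree vertex, each move strictly decreasing $P$ — is also available, but one must separately verify that the process terminates at $U^{*}$ without getting stuck, so the majorization route is cleaner.
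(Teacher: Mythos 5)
Your proposal is correct, and it is worth noting that the paper itself offers no proof of this corollary --- it is imported verbatim from \cite{Ghalavand2023AshrafiAD}; the only ingredient the paper supplies is the arithmetic--geometric mean display immediately preceding the tree corollary, which is exactly your lower-bound argument: with $m=n$ one gets $2m+rn=(r+2)n$, so $P(U)=\prod_v(d_v+r)\leqslant(r+2)^n$ with equality iff $U\cong C_n$, matching the equality remark the paper makes after the corollary. Your upper bound is a complete, self-contained argument that the paper does not give: reduce to minimising $P(U)$, observe that $\ln(x+r)$ is strictly concave, and apply Karamata after showing that the degree sequence $(n-1,2,2,1,\dots,1)$ of $S_n^e$ majorizes every unicyclic degree sequence. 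The partial-sum checks are right ($d_1\leqslant n-1$ for $k=1$; $s_k\leqslant n+k$ for $k\geqslant 3$ from $d_i\geqslant 1$), and the crux $d_1+d_2\leqslant n+1$ is handled correctly: if $d_1+d_2\geqslant n+2$ then the two vertices of largest degree have at least two common neighbours besides each other when adjacent (giving a $4$-vertex, $5$-edge subgraph of cyclomatic number $2$) and at least four when non-adjacent (giving a $K_{2,4}$ of cyclomatic number $3$), and since the cyclomatic number of a subgraph never exceeds that of the ambient graph this contradicts unicyclicity. This is precisely where the unicyclic hypothesis enters, and your bound is tight at $S_n^e$ where $d_1+d_2=n+1$. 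The majorization route also cleanly yields the equality characterisation $U\cong S_n^e$ via strict concavity, consistent with the paper's remark following the corollary. One small presentational caveat: make explicit that Karamata is applied directly between $D$ and $D^*$ (no realizability of intermediate sequences is needed), so the argument does not depend on the paper's Proposition 2.1 about chains of graphic sequences.
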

The equality on Corollary~\ref{corsomen1} at the left is satisfied if and only if $U \cong C_{n}$, and the equality on the right is attained if and only if $U \cong S_{n}^{e}$. Denote by $S_{n}^{2 e}$ the graph constructed from $S_{n}$ by adding two edges, so that its degree sequence is $\mathscr{D}\left(S_{n}^{2 e}\right)=(n-1,3,2,2, \overbrace{1, \ldots, 1}^{n-4}) .$ We also, assume that $\mathscr{B}_{n}(2,3)$ is the family of all connected bicyclic graphs with degree sequence $(3,3, \overbrace{2, \ldots, 2}^{n-2}) .$
\begin{corollary}[\cite{Ghalavand2023AshrafiAD}]~\label{corsomen2}
 Let $B$ be a bicyclic $n$- vertex graph. Then, $\mathrm{I}_{A G}(B) \leqslant 1-n^{n}(1+r)^{n-4}(r+2)^{2}(3+r)(n-1+r)(r n+2 n+2)^{-n}, $ 
with equality if and only if $B \cong S_{n}^{2 e}$. Moreover, $\mathrm{I}_{A G}(B) \geqslant 1-n^{n}(r+2)^{n-2}(3+r)^{2}(r n+2 n+2)^{-n},$
with equality if and only if $B \in \mathscr{B}_{n}(2,3)$.
\end{corollary}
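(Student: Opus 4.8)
The plan is first to unwind $\mathrm{I}_{AG}$. From the arithmetic--geometric mean inequality recalled above, for a graph $G$ on $n$ vertices with $m$ edges one has
\[
\mathrm{I}_{AG}(G)=1-\frac{n^{n}\prod_{v\in V(G)}\bigl(d(v)+r\bigr)}{(2m+rn)^{n}},
\]
a quantity that is nonnegative and vanishes precisely on regular graphs. For a bicyclic $B$ (connected, with $m=n+1$) the denominator $(rn+2n+2)^{n}$ depends only on $n$; hence maximizing (resp.\ minimizing) $\mathrm{I}_{AG}(B)$ amounts to minimizing (resp.\ maximizing) the product $\prod_{v}(d(v)+r)$, equivalently the sum $g(B):=\sum_{v}\log(d(v)+r)$, over all degree sequences realizable by an $n$-vertex bicyclic graph. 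Since $x\mapsto\log(x+r)$ is strictly concave and the degree sum $2n+2$ is fixed, $g$ is a strictly Schur-concave symmetric function of the degree multiset. So I would (i) identify the bicyclic graphic degree sequences that are largest and smallest in the majorization order, (ii) read off the two bounds, and (iii) handle the equality cases.

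For (i) I expect the majorization-maximal bicyclic degree sequence to be $\mathscr{D}(S_n^{2e})=(n-1,3,2,2,\underbrace{1,\dots,1}_{n-4})$ and the majorization-minimal one to be $(3,3,\underbrace{2,\dots,2}_{n-2})$, the shared degree sequence of the graphs in $\mathscr{B}_n(2,3)$. For the maximal sequence I would bound the partial sums of the nonincreasing degree sequence of $B$: if $S$ is the set of the $k$ vertices of largest degree, then $\sum_{v\in S}d(v)=2e(S)+\partial(S)\le m+e(S)$, where $e(S)$ counts the edges inside $S$ and $\partial(S)$ the edges leaving it, and $e(S)\le\min\{\binom k2,\,k+1\}$, the bound $k+1$ following since the circuit rank of $B[S]$ cannot exceed $m-n+1=2$. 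This yields $\sum_{i=1}^{k}d_i\le n-1,\ n+2,\ n+4$ for $k=1,2,3$ and $\le n+k+2$ for $k\ge4$ --- exactly the partial sums of $\mathscr{D}(S_n^{2e})$, with equality forced at $k=n$. For the minimal sequence I would use the excess identity $\sum_{v}(d(v)-2)=2$: the average degree exceeds $2$, so $d_1\ge3$; and if the $j$ smallest degrees summed to $\ge 2j+1$ for some $j\le n-2$, then the $j$-th smallest degree would be $\ge3$, hence so would the $n-j$ largest, forcing $\sum_{i=1}^{j}d_i\le 2n+2-3(n-j)=3j-n+2<2j+1$, a contradiction; so $\sum_{i=1}^{k}d_i\ge 2k+2$ for $2\le k\le n$, which are the partial sums of $(3,3,2^{n-2})$.

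Given (i), strict Schur-concavity of $g$ delivers both inequalities, since $\prod_v(d(v)+r)$ equals $(1+r)^{n-4}(r+2)^2(3+r)(n-1+r)$ at $\mathscr{D}(S_n^{2e})$ and $(r+2)^{n-2}(3+r)^2$ at $(3,3,2^{n-2})$; moreover equality can hold only when $\mathscr{D}(B)$ is exactly the relevant extremal sequence. For the lower bound this means $B\in\mathscr{B}_n(2,3)$ by definition. For the upper bound I still need that a bicyclic $n$-vertex graph with degree sequence $(n-1,3,2,2,1^{n-4})$ is unique up to isomorphism: the (unique) vertex $c$ of degree $n-1$ is adjacent to every other vertex, and $B-c$ has $m-(n-1)=2$ edges and degree sequence $(2,1,1,0,\dots,0)$, which forces $B-c$ to be a path on three vertices together with isolated vertices, i.e.\ $B\cong S_n^{2e}$.

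I expect step (i) to be the crux: the two majorization statements --- in particular that \emph{every} bicyclic degree sequence majorizes $(3,3,2^{n-2})$ --- carry the real content, and crude edge counting for the partial sums is not sharp near $k\approx n/2$; the circuit-rank bound on $e(S)$ and the excess-degree identity $\sum_v(d(v)-2)=2$ are exactly the inputs that make the estimates tight. (One could instead invoke the unit-transfer decomposition of majorization from the Proposition above, but then one must check that each intermediate sequence stays realizable by a bicyclic graph --- essentially the same work.) Everything else is routine; note only that the statement tacitly assumes $n\ge4$, and that for $n=4$ the two extremal sequences coincide ($K_4-e$ being the unique bicyclic graph on four vertices) so the bounds merge.
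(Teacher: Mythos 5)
The paper does not actually prove this corollary: it is imported verbatim from \cite{Ghalavand2023AshrafiAD}, and the only supporting material supplied here is the arithmetic--geometric mean identity $n^{n}\prod_{v}(d(v)+r)\,(2m+rn)^{-n}\leqslant 1$ together with the majorization proposition quoted from \cite{ZhangXM2013Zhang}. Your argument is correct and fills that gap along exactly the lines the surrounding text suggests: you correctly read off $\mathrm{I}_{AG}(G)=1-n^{n}\prod_{v}(d(v)+r)(2m+rn)^{-n}$ (consistent with the tree and unicyclic corollaries, where the denominators $(rn+2n-2)^{n}$ and $((r+2)n)^{n}$ correspond to $2m=2n-2$ and $2m=2n$), reduce the problem to extremality of $\prod_v(d(v)+r)$ under majorization via strict Schur-concavity of $\sum_v\log(d(v)+r)$, and then pin down the majorization-maximal and -minimal bicyclic degree sequences. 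The two partial-sum estimates are the genuine content and both check out: the circuit-rank bound $e(S)\leq\min\{\binom{k}{2},k+1\}$ gives $\sum_{i\leq k}d_i\leq n-1,\,n+2,\,n+4,\,n+k+2$, matching $(n-1,3,2,2,1^{n-4})$, and the excess identity $\sum_v(d(v)-2)=2$ yields $\sum_{i\leq k}d_i\geq 2k+2$ for $k\geq 2$, matching $(3,3,2^{n-2})$; the uniqueness of the realization of $(n-1,3,2,2,1^{n-4})$ via $B-c$ having two edges with degree sequence $(2,1,1,0,\dots,0)$ is also right. Your closing caveats (that $n\geq 4$ is tacitly assumed, the bounds merging at $n=4$) are accurate and worth keeping; the only cosmetic slip is the typo $[d(v_i)+p_j]$ vs.\ the intended structure in the quoted Sombor theorem elsewhere, which does not affect your proof.
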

According to Corollary~\ref{corsomen2}, assume  $\mathscr{C}_{n}(\Delta-1, \Delta)$ be the family of all connected $c$-cyclic graphs with degree sequence $(\overbrace{\Delta, \ldots, \Delta}^{n_{\Delta}}, \overbrace{\Delta-1, \ldots, \Delta-1}^{n-n_{\Delta}}).$ Among all $c$-cyclic graphs, $c \geqslant 1$, of order $n$ the uniquely determined graph with minimal $I_{A G}$-irregularity is a graph from $\mathscr{C}_{n}(\Delta-1, \Delta)$.

The general Randi\'{c} index $\mathcal{I}_n$ is defined~\cite{Yuan2023M} as
\[
\mathcal{I}_n=\sum_{\{i,j\}\in\mathcal{E}}(d_id_j)^{\tau}. 
\]
When $\tau=-\frac{1}{2}$, $\mathcal{I}_n$ is the Randi\'{c} index.
\begin{corollary}[\cite{Yuan2023M}]
Let $\mathcal{I}_n$ be the general Randi\'{c} index  of the random graph
$\mathcal{G}_n(\alpha,\kappa)$ and $\sigma_n^2$. If $\tau>0$,  
then
\[
\frac{\mathcal{I}_n-\mathbb{E}[\mathcal{I}_n]}{\sigma_n}\Rightarrow \mathcal{N}(0,1),
\]
as $n$ goes to infinity. 
In addition, the expectation $\mathbb{E}[\mathcal{I}_n]$ has the following asymptotic expression
\[
\mathbb{E}[\mathcal{I}_n]=\left(1+O\left(\frac{1}{np_n}\right)\right)\sum_{1\leq i<j\leq n}p_nw_{ij}(w_{i(j)}w_{j(i)})^{\tau},
\]
where the error rate $\frac{1}{np_n}$ cannot be improved.
\end{corollary}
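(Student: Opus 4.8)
The plan is to obtain this corollary as a direct specialization of the general central limit theorem for degree-based topological indices of the random graph model stated above, taking the edge-weight function to be $f(x,y)=(xy)^{\tau}=x^{\tau}y^{\tau}$. Under this choice the abstract index to which the theorem applies is exactly the general Randi\'c index $\mathcal{I}_n$, so once the hypotheses of the theorem are verified for this particular $f$, both the asymptotic normality statement and the asymptotic expansion of $\mathbb{E}[\mathcal{I}_n]$ follow by substitution, with the error term $O(1/(np_n))$ and its sharpness inherited verbatim. Thus the whole argument reduces to: (i) checking that $f(x,y)=(xy)^{\tau}$ meets the regularity conditions of the theorem when $\tau>0$, and (ii) checking that the normalizing variance $\sigma_n^2$ is non-degenerate.

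First I would verify the regularity requirements on $f$. The function $f(x,y)=x^{\tau}y^{\tau}$ is symmetric, and for $\tau>0$ it is continuous on the closed unit square $[0,1]^2$ (the range of the normalized degrees), bounded there by $0\le f\le 1$, and vanishes at the origin; in particular there is no singularity of the kind that obstructs the genuine Randi\'c case $\tau=-\tfrac12$, which is precisely why the hypothesis $\tau>0$ is imposed. Its partial derivatives $f_x=\tau x^{\tau-1}y^{\tau}$ and $f_y=\tau x^{\tau}y^{\tau-1}$, along with the second-order derivatives, are continuous on the open square, and for $\tau\ge 1$ they extend continuously and boundedly to the boundary, so the hypotheses of the theorem hold immediately. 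For $0<\tau<1$ the derivatives blow up near the coordinate axes, and here I would insert a truncation step: with high probability the normalized degrees of the random graph stay bounded away from $0$ (or the aggregate contribution of edges with an endpoint of anomalously small normalized degree is $o(\sigma_n)$ in probability), so $f$ may be replaced by a Lipschitz modification agreeing with it off a small neighbourhood of the axes, the general theorem is applied to that modification, and the replacement error is shown to be negligible relative to $\sigma_n$.

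Next I would record the non-degeneracy of the normalization and then conclude. Since $\tau>0$, $f$ is strictly increasing in each argument and therefore non-constant, so the H\'ajek-type projection of $\mathcal{I}_n-\mathbb{E}[\mathcal{I}_n]$ onto the independent edge indicators has variance of exactly the order produced by the general theorem; consequently $\sigma_n\to\infty$ at that rate, the ratio $(\mathcal{I}_n-\mathbb{E}[\mathcal{I}_n])/\sigma_n$ is well defined, and it converges in distribution to $\mathcal{N}(0,1)$. The expansion $\mathbb{E}[\mathcal{I}_n]=(1+O(1/(np_n)))\sum_{1\le i<j\le n}p_nw_{ij}(w_{i(j)}w_{j(i)})^{\tau}$ is then the general expansion with $f(w_{i(j)},w_{j(i)})$ replaced by $(w_{i(j)}w_{j(i)})^{\tau}$, and the assertion that the rate $1/(np_n)$ cannot be improved is inherited because the extremal configuration exhibited in the proof of the theorem can be realized with this same $f$; alternatively, one checks optimality directly from the second-order Taylor expansion of $(xy)^{\tau}$ about the graphon values $w_{i(j)},w_{j(i)}$.

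The main obstacle is the regime $0<\tau<1$ together with a genuinely sparse regime $p_n\to 0$: there one must control the lower tail of the normalized degree sequence finely enough that truncating $f$ near the coordinate axes costs only $o(\sigma_n)$ in probability, which is where a concentration estimate for the small degrees of the model is needed. In the dense case $p_n=\Theta(1)$, or whenever the theorem cited above is already stated under hypotheses covering all bounded continuous $f$, this difficulty disappears and the corollary is an immediate substitution.
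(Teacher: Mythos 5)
Your overall strategy---obtain the corollary by substituting $f(x,y)=(xy)^{\tau}$ into the general central limit theorem for degree-based topological indices of $\mathcal{G}_n(\beta,W)$ quoted earlier---is exactly the intended route; the paper itself gives no independent proof and simply records the statement as an instance of that theorem, so in that sense you and the paper agree. One correction, though: your verification step is aimed at the wrong domain. The index is $\mathcal{I}_n=\sum_{\{i,j\}\in\mathcal{E}}f(d_i,d_j)$ where $d_i,d_j$ are the \emph{actual} vertex degrees (positive integers), and the arguments $w_{i(j)},w_{j(i)}$ appearing in the expansion of $\mathbb{E}[\mathcal{I}_n]$ are of order $np_n\to\infty$, not points of the unit square. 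Consequently there is no singularity of $(xy)^{\tau}$ near the coordinate axes to worry about, your proposed truncation for $0<\tau<1$ addresses a non-issue, and the role of the hypothesis $\tau>0$ is not to avoid a blow-up at the origin but to control the growth of $f$ and its partial derivatives $f^{(s,t)}$ at the scale of the typical degree (this is where the case $\tau<0$, including the classical Randi\'c index $\tau=-\tfrac12$, requires separate treatment in the source). With that reorientation, your two checks---regularity of $f$ at the relevant scale and non-degeneracy of $\sigma_n^2$---are the right ones, and the expansion of $\mathbb{E}[\mathcal{I}_n]$ together with the sharpness of the rate $1/(np_n)$ is indeed inherited verbatim from the general theorem.
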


Several corollaries describe bounds of the $AG$-irregularity index $\mathrm{I}_{AG}$ for trees, unicyclic, and bicyclic graphs~\cite{Ghalavand2023AshrafiAD}, identifying extremal graphs with minimal irregularity values based on their degree sequences and cyclic structures. This collection of indices enriches the toolkit for analyzing molecular graphs with both local and global structural perspectives, providing valuable correlations to chemical and physical properties in graph-theoretic and chemical modeling contexts.

\subsection{Wiener Index}~\label{sub1sec3}
The study of topological indices forms a cornerstone in mathematical chemistry, particularly in understanding the structural properties of graphs. Among these indices, the \emph{Wiener index}~\cite{SardarM2025SSD} is a prominent metric, the \emph{Wiener index}~\cite{minimumWienerindex}, proposed by Wiener in 1947, is one of the most studied topological indices in chemical graph theory. Defined for a graph $G$ as the sum of distances between all unordered pairs of vertices. When applied to trees, the Wiener index can be elegantly formulated as:
$$
W(G) = \sum_{u < v} d(u, v), \quad W(G) = \sum_{\varepsilon = ab \in E(G)} n_a n_b
$$
where $d(u, v)$ denotes the distance between vertices $u$ and $v$, and $n_a$ and $n_b$ represent the number of vertices on either side of edge $ab$. For graphs containing cycles, an important relation holds $W(G) \leq S(G)$
where $S(G)$ denotes the Szeged index introduced by Gutman~\cite{SardarM2025SSD}. The Szeged index is given by:
$$
S(G) = \sum_{e} n_a n_b
$$
with the sum taken over all edges $e = ab$ in $G$. This irregularity measure~\cite{Ghalavand2023AshrafiAD} is defined as 
\[
S(G)=\sum_{v \in V(G)}\left|d(v)-\frac{2 m}{n}\right|.
\]
Nikiforov proved that $\operatorname{Var}(G) /(2 \sqrt{2 m}) \leqslant \operatorname{CS}(G) \leqslant$ $\sqrt{S(G)}$  and  $ S(G)^{2} /\left(2 n^{2} \sqrt{2 m}\right) \leqslant \operatorname{CS}(G) \leqslant \sqrt[4]{n^{2} \operatorname{Var}(G)}$ by considering $ S(G)^{2} / n^{2} \leqslant \operatorname{Var}(G) \leqslant S(G)$ and  $\lambda_{n}(G)+\lambda_{n}\left(G^{c}\right) \leqslant-1-S(G)^{2} / 2 n^{3}$ and $\lambda_{k}(G)+\lambda_{n-k+2}\left(G^{c}\right) \leqslant-1-2 \sqrt{2 S(G)}$, where $2 \leqslant k \leqslant n$ and $G^{c}$ denotes the complement of $G$.
Let $G$ be a simple graph of order $n$, size $m$ and with a degree sequence $\mathscr{D}(G)=$ $\left(d_{1}, d_{2}, \ldots, d_{n}\right)$. By the classical result of Euler, $\sum_{i=1}^{n} d_{i}=2 m$. Suppose that $r$ is a non-negative real number~\cite{Ghalavand2023AshrafiAD}.
Additionally, the \emph{first and second Zagreb indices} are defined~\cite{Gut2025man} as:
$$
M_1(G) = \sum_{e} (d_a + d_b), \quad M_2(G) = \sum_{e} d_a d_b
$$
where $d_a$ and $d_b$ are the degrees of the vertices $a$ and $b$, respectively, across all edges $e = ab$. Another key metric is the \emph{irregularity of} $G$, defined as the sum of the absolute differences in degrees over all the edges:
\begin{equation}~\label{eq1}
\irr(G) = \sum_{e}\left|d_a - d_b\right|
\end{equation}

Došić et al. introduced the \emph{Mostar index} $M_{\circ}(G)$, according to~\eqref{eq1} as a novel bond-additive invariant, described as:
$$
M_{\circ}(G) = \sum_{e}\left|n_a - n_b\right|
$$
A Halin graph~\cite{minimumWienerindex} of $G$ is a plane graph consisting of a plane embedding of a tree $T$ of order at least 4 containing no vertex of degree 2, and a cycle $C$ connecting all leaves of $T$. The tree $T$ and the cycle $C$ (or sometimes $C(G)$) are called the characteristic tree and the adjoint cycle of $G$, respectively. Denote by $\mathbb{H}_{n, 4}$ the set of Halin graphs of order $n$ with characteristic trees of diameter 4. In this paper, we determine the minimum Wiener index of graphs in $\mathbb{H}_{n, 4}$ and find the corresponding extremal graphs.  For any $v \in V(G)$, let $N_G(v)$ be the set of neighbors of $v$ and $d_G(v)=\left|N_G(v)\right|$ be the degree of $v$. The distance $d_G(u, v)$ of two vertices $u, v \in V(G)$ is the length of a shortest $u-v$ path in $G$. The greatest distance between any two vertices in $G$ is the diameter of $G$, which is denoted by $\operatorname{diam}(G)$. The eccentricity~\cite{minimumWienerindex} of $v \in V(G)$ is denoted by $\varepsilon(v)$ and is defined as $\varepsilon(v)=\max _{u \in V(G)} d_G(v, u).$ \par 
 Let $G \in \mathbb{H}_{n, 4}$, we use $T(G)$ and $C(G)$ to denote the characteristic tree and the adjoint cycle of $G$. Note that $G$ has only one vertex $o$ of eccentricity 2~\cite{minimumWienerindex}. For a vertex $v \in N_G(o)$, if $v \in C(G)$ then $v$ is called a hanging vertex, otherwise $v$ is called a support vertex~\cite{minimumWienerindex}. We use $[n]$ to denote $\{1,2, \ldots, n\}$. Suppose that $G$ has $m$ support vertices $v_1, v_2, \ldots, v_m$ and $N_G\left(v_i\right) \cap C(G)=\left\{v_i^1, v_i^2, \ldots, v_i^{k_i}\right\}$, where $i \in[m]$ and $k_i \geq 2$. Let $\mathbb{H}_{n, 4}^{\prime}$ be the set of all those graphs of $\mathbb{H}_{n, 4}$ that contain no hanging vertex. Denote by $\mathbb{H}_{n, 4}^*$ the set of all those graphs of $\mathbb{H}_{n, 4}$ that contain at least one hanging vertex.  Some important results and remarks regarding these indices include:

\begin{lemma}[\cite{SardarM2025SSD}]
Let $G$ be a graph with $a, b \in V$ and $f \in E$. Then $d_f = d_a + d_b - 2$.
\end{lemma}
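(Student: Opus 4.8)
The plan is to read $d_f$ as the \emph{edge degree} of $f$, that is, the degree of the vertex $f$ in the line graph $L(G)$, which counts the edges of $G$ adjacent to $f$ (those sharing an endpoint with it); here one writes $f=ab$ with $a,b\in V(G)$ as in the statement. Under this reading the identity is essentially a one-line counting argument, so I only sketch the steps.

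First I would partition the set $A_f$ of edges adjacent to $f$ as $A_f = A_a \cup A_b$, where $A_a$ consists of the edges incident with $a$ other than $f$ itself and $A_b$ consists of the edges incident with $b$ other than $f$. By definition every edge adjacent to $f$ meets $f$ at $a$ or at $b$, so $A_a\cup A_b$ really covers $A_f$; conversely every edge in $A_a\cup A_b$ shares an endpoint with $f$ and hence is adjacent to it.

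Next I would count each part. Since $a$ has degree $d_a$ and exactly one of the $d_a$ edges at $a$ equals $f$, we get $|A_a| = d_a - 1$, and symmetrically $|A_b| = d_b - 1$. The one point needing care is the disjointness of $A_a$ and $A_b$: an edge belonging to both would be incident with both $a$ and $b$, hence would be a second copy of $f$ (or, if $a=b$, a loop), neither of which can occur because $G$ is simple. Therefore $d_f = |A_f| = |A_a| + |A_b| = (d_a-1)+(d_b-1) = d_a+d_b-2$.

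I do not anticipate any genuine obstacle here: the entire content is the disjointness step, which is precisely the place where the hypothesis that $G$ is simple is used. One could equivalently invoke inclusion--exclusion, $|A_a\cup A_b| = |A_a|+|A_b|-|A_a\cap A_b|$ with $|A_a\cap A_b| = 0$, but that is the same computation; and for multigraphs the formula would have to be corrected by the multiplicity of $f$ together with any loops at $a$ or $b$.
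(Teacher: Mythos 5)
Your argument is correct: interpreting $d_f$ as the degree of the edge $f=ab$ in the line graph, the count $(d_a-1)+(d_b-1)$ with disjointness guaranteed by simplicity is exactly the standard proof of this identity. The paper itself states this lemma as a citation from \cite{SardarM2025SSD} without reproducing a proof, so there is nothing to compare against beyond noting that your reading of $d_f$ (and the implicit assumption $f=ab$) is the intended one.
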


\begin{lemma}[\cite{minimumWienerindex}]
Let $x_n \geq x_{n-1} \geq \cdots \geq x_2 \geq x_1 \geq 2$ be $n$ positive integers with $\sum_{i=1}^n x_i=M$. Define
$$
f\left(x_1, x_2, \ldots, x_n\right)=\sum_{1 \leq i<j \leq n} x_i x_j
$$
where $i, j \in[n]$. Then $f\left(x_1, x_2, \ldots, x_n\right)$ attains the minimum value if and only if there exists $l \in[n]$ such that $x_l=M-2(n-1)$ and $x_i=2$ for $i \in[n] \backslash\{l\}$.
\end{lemma}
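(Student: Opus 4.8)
The plan is to convert this into a short convexity/majorization argument by expressing $f$ through a sum of squares. Since
\[
\Bigl(\sum_{i=1}^{n} x_i\Bigr)^2 = \sum_{i=1}^{n} x_i^2 + 2\!\!\sum_{1\le i<j\le n}\!\! x_i x_j \qquad\text{and}\qquad \sum_{i=1}^{n} x_i = M,
\]
we get $f(x_1,\dots,x_n) = \tfrac12\bigl(M^2 - \sum_{i=1}^{n} x_i^2\bigr)$, so minimizing $f$ over admissible tuples is equivalent to \emph{maximizing} $\sum_{i=1}^{n} x_i^2$ subject to $\sum_{i=1}^n x_i = M$, $x_i \in \mathbb{Z}$, and $x_i \ge 2$. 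Both $f$ and $\sum x_i^2$ are symmetric functions of the $x_i$, so the monotonicity constraint $x_n \ge \cdots \ge x_1$ may be ignored: one optimizes over the corresponding multisets and re-sorts afterward. Admissibility forces $M \ge 2n$; if $M = 2n$ the only tuple is $(2,\dots,2)$, which is already the claimed extremal one (with $x_l = 2$), so assume $M > 2n$ below.

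The key step is an unbalancing exchange. Suppose an admissible tuple has two coordinates with $x_i \ge x_j \ge 3$, i.e.\ at least two entries exceed $2$. Replace $(x_i,x_j)$ by $(x_i+1, x_j-1)$: this keeps all entries positive integers that are $\ge 2$ (the inequality $x_j - 1 \ge 2$ is exactly where $x_j \ge 3$ is needed) and preserves the sum $M$, while
\[
(x_i+1)^2 + (x_j-1)^2 - x_i^2 - x_j^2 = 2(x_i - x_j) + 2 \ge 2 > 0,
\]
so $\sum x_i^2$ strictly increases and $f$ strictly decreases. Because $\sum x_i^2$ is an integer bounded above (say by $M^2$), only finitely many such exchanges can be performed, and the process stops precisely when at most one coordinate exceeds $2$. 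At that point all but one entry equals $2$, and $\sum x_i = M$ forces the remaining entry to equal $M - 2(n-1)$ (with the ordering reinstated this is $x_n$, but the statement's ``there exists $l$'' phrasing is all that is needed).

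This simultaneously yields both directions: the tuple with one entry $M-2(n-1)$ and the rest equal to $2$ is the \emph{unique} admissible tuple with at most one entry above $2$, and every other admissible tuple is joined to it by a chain of strictly $f$-decreasing exchanges, so it is the unique minimizer of $f$. The argument is essentially routine; the only genuine points of care are (a) verifying that the exchange move respects the floor $x_i \ge 2$ — which is why the move is applied only to a coordinate that is at least $3$ — and (b) the bookkeeping that guarantees termination and the strictness of the resulting inequality. No deeper obstacle is expected.
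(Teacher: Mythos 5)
The paper does not prove this lemma: it is imported verbatim from the cited reference \cite{minimumWienerindex} and stated without argument, so there is no in-paper proof to compare against. Your proof is correct and complete on its own terms: the identity $f=\tfrac12\bigl(M^2-\sum_i x_i^2\bigr)$ correctly converts minimizing $f$ into maximizing $\sum_i x_i^2$ over integer tuples with $x_i\ge 2$ and fixed sum, the unbalancing exchange $(x_i,x_j)\mapsto(x_i+1,x_j-1)$ for $x_i\ge x_j\ge 3$ strictly increases $\sum_i x_i^2$ while preserving admissibility, termination follows since an integer quantity bounded above increases by at least $2$ at each step (or simply from finiteness of the feasible set), and the unique terminal configuration with at most one entry exceeding $2$ is exactly the claimed extremal tuple, which settles both directions of the ``if and only if.'' This is the standard smoothing argument one would expect for such a statement, and I see no gap.
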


\begin{remark}[\cite{SardarM2025SSD}]
Let $T(b_1, b_2, \ldots, b_m)$ (or simply $T_n$) be a star-like tree on $n = lm + 1$ vertices with $b_1 = b_2 = \cdots = b_m = l$. Then $M_0(T_n) = M_0(L_{T_n}) + m l (m - 1).$
\end{remark}

\begin{theorem}[\cite{minimumWienerindex}]
For any graph $G \in \mathbb{H}_{n, 4}^{* 1}$, we have $W(G) \geq F_2(n)$ and
$$
F_2(n)= \begin{cases}106, & \text { if } n=11, \\ 133, & \text { if } n=12 \\ n^2+3 n-46, & \text { if } n \geq 13\end{cases}
$$
where the equality $W(G)=F_2(n)$ holds if and only if $G \cong G_{3,3}$.
\end{theorem}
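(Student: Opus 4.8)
The plan is to determine the structure of the extremal graphs in $\mathbb{H}_{n,4}^{*1}$ explicitly and then compute the Wiener index by decomposing the distance sum according to the unique central vertex $o$ of eccentricity $2$. First I would recall that every $G \in \mathbb{H}_{n,4}$ has a characteristic tree $T(G)$ of diameter $4$, so $T(G)$ is a "double star"-like spider: there is a center $o$, some support vertices $v_1,\dots,v_m$ adjacent to $o$, each $v_i$ carrying $k_i \ge 2$ leaves $v_i^1,\dots,v_i^{k_i}$, and possibly some hanging vertices directly adjacent to $o$ (these lie on $C(G)$). The superscript notation $\mathbb{H}_{n,4}^{*1}$ should single out the subfamily with exactly one hanging vertex (the $*$ meaning "at least one hanging vertex" as defined, and the $1$ fixing the count), so I would first pin down that definition and the resulting degree constraints: $\deg(o) = m+1$, $\deg(v_i) = k_i + 1$, with $n = 1 + m + 1 + \sum_i k_i$ and the adjoint cycle $C(G)$ visiting all $\sum k_i$ leaves plus the one hanging vertex.

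Next I would set up the Wiener index computation via the cut/edge formula $W(G) = \sum_{e=ab} n_a n_b$ is not directly available since $G$ has cycles, so instead I would use $W(G) = \sum_{u<v} d_G(u,v)$ and split the vertex set into $\{o\}$, the support vertices, the leaves, and the hanging vertex, bounding pairwise distances: within $G$ every vertex is within distance $2$ of $o$, leaves on a common support vertex are at distance $2$, leaves on different support vertices are at distance $3$ or $4$ depending on adjacency along $C(G)$, and the cycle $C(G)$ provides shortcuts that must be accounted for carefully. This reduces $W(G)$ to an expression in $m$ and the $k_i$'s plus correction terms coming from $C(G)$-adjacencies. Then I would invoke Lemma (the one on minimizing $\sum_{i<j} x_i x_j$ subject to $\sum x_i = M$ and $x_i \ge 2$) — which is exactly tailored to this paper — to show that, for fixed $n$ and fixed $m$, the distance sum is minimized when the $k_i$ are as unbalanced as possible, i.e. one $v_i$ absorbs almost all leaves and the rest have $k_i = 2$. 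After that, a second optimization over $m$ (comparing small values $m = 2, 3, \dots$) pins down $m = 3$ and the specific graph $G_{3,3}$, with the piecewise formula $F_2(n) = n^2 + 3n - 46$ for $n \ge 13$ emerging from the dominant quadratic term, and the sporadic values $106, 133$ at $n = 11, 12$ handled by direct enumeration since the "unbalanced" configuration is not yet feasible there.

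The main obstacle I anticipate is correctly handling the contribution of the adjoint cycle $C(G)$ to the distances: unlike a pure tree, the cycle creates alternative short paths between leaves that would otherwise be at distance $4$ through $o$, and the precise count of such shortened pairs depends on the cyclic order in which leaves of different support vertices are interleaved. I would argue that to minimize $W(G)$ one actually wants to maximize the number of "long" (distance $4$) pairs replaced by distance-$3$ paths, which forces a particular cyclic arrangement, and I would need a short exchange/rotation argument showing any other arrangement can be improved. A secondary technical point is verifying the boundary cases $n = 11, 12$ by hand and checking that $G_{3,3}$ is the unique minimizer (no ties), which amounts to confirming strictness in the Lemma's equality condition and in the $m$-comparison. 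Once the cycle bookkeeping is isolated as a clean combinatorial sub-lemma, the rest is an assembly of the quadratic estimate plus the two discrete optimizations.
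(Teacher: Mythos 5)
You should know at the outset that the paper does not prove this statement: it is quoted verbatim, with attribution, from the cited work on the minimum Wiener index of Halin graphs, and appears in Section~3.1 purely as imported background. There is therefore no in-paper proof to compare your argument against. The only related material the paper reproduces is the lemma stating that $f(x_1,\dots,x_n)=\sum_{1\le i<j\le n}x_ix_j$ with $\sum_i x_i=M$ and $x_i\ge 2$ is minimized exactly at the maximally unbalanced configuration, and you correctly identify that lemma as the intended optimization tool.

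Judged on its own, however, your text is a plan rather than a proof, and the deferred steps are precisely the ones carrying the content. First, the class $\mathbb{H}_{n,4}^{*1}$ is never defined in the paper (only $\mathbb{H}_{n,4}$, $\mathbb{H}_{n,4}'$ and $\mathbb{H}_{n,4}^{*}$ are), and your guess that the superscript $1$ fixes the number of hanging vertices at one is unverified; everything downstream depends on it. Second, the distance bookkeeping along the adjoint cycle $C(G)$ --- which pairs of leaves sit at distance $2$, $3$ or $4$, and how that depends on the cyclic interleaving of the sets $N_G(v_i)\cap C(G)$ --- is the step you yourself flag as the main obstacle, and without it you do not even possess the expression in $m$ and the $k_i$ to which the quoted lemma would be applied. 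Third, the direction of the optimization is asserted rather than derived: the lemma identifies the minimizer of $\sum_{i<j}k_ik_j$, but whether $W(G)$ contains that quantity with a positive or a negative coefficient decides whether the extremal leaf distribution is unbalanced (as you claim) or balanced; the name $G_{3,3}$ of the unique extremal graph suggests a balanced configuration with two equal parameters, which would run directly against your ``one $v_i$ absorbs almost all leaves'' conclusion, so the sign must be checked before the lemma is invoked. Finally, the sporadic values $106$ and $133$ at $n=11,12$ and the uniqueness assertion are left to an unspecified ``direct enumeration.'' Each of these is a genuine gap, not a routine verification, so the proposal cannot be accepted as a proof in its current form.
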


\begin{theorem}[\cite{SardarM2025SSD}]
Let $T_n$ be a double star tree on $2n + 2$ vertices and $2n + 1$ edges. Then
$M_{\circ}(L_{T_n}) < M_{\circ}(T_n).$
\end{theorem}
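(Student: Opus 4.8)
The plan is to evaluate $M_\circ$ explicitly on both graphs and compare, since the statement is ultimately a finite computation. Fix notation: $T_n$ is the (balanced) double star obtained from an edge $c_1c_2$ by attaching $n$ pendant leaves to each of $c_1$ and $c_2$, so $|V(T_n)|=2n+2$ and $|E(T_n)|=2n+1$; and $L_{T_n}$ denotes its line graph (consistent with the usage in the preceding Remark, where for a star $K_{1,m}$ one indeed has $M_\circ(K_{1,m})=M_\circ(L_{K_{1,m}})+m(m-1)$ with $L_{K_{1,m}}=K_m$). First I would compute $M_\circ(T_n)$: since $T_n$ is a tree, every edge $e=uv$ is a bridge and $n_u(e)+n_v(e)=2n+2$. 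The central edge $c_1c_2$ splits the vertices into two sets of size $n+1$, contributing $0$; each of the $2n$ pendant edges has a leaf side of size $1$ and an opposite side of size $2n+1$, contributing $2n$. Hence $M_\circ(T_n)=2n\cdot 2n=4n^2$.

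Next I would describe $L_{T_n}$: its $2n+1$ vertices are the edges of $T_n$; the $n$ edges incident with $c_1$ together with the central edge are pairwise adjacent, forming a $K_{n+1}$, and symmetrically for $c_2$, and these two copies of $K_{n+1}$ meet exactly in the single vertex $w$ corresponding to $c_1c_2$. Thus $L_{T_n}$ is two $K_{n+1}$'s glued at $w$: distances within each clique are $1$, and a vertex of one "outer" clique is at distance $2$ from a vertex of the other, the shortest path passing through $w$. I would then classify the edges of $L_{T_n}$ for the Mostar sum. For an edge $wa$ with $a$ an outer vertex of a clique, the vertices strictly closer to $w$ are $w$ itself and all $n$ outer vertices of the other clique, so $n_w=n+1$; only $a$ is strictly closer to $a$, so $n_a=1$; the remaining $n-1$ outer vertices of $a$'s clique are equidistant. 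Each such edge therefore contributes $|(n+1)-1|=n$, and there are $2n$ of them. For an edge $aa'$ inside one outer clique, every other vertex — including $w$ — is equidistant from $a$ and $a'$, contributing $0$. Summing, $M_\circ(L_{T_n})=2n\cdot n=2n^2$.

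Finally, comparing the two values gives $M_\circ(L_{T_n})=2n^2<4n^2=M_\circ(T_n)$ for every $n\ge 1$, which is the claim. The same method applied to an unbalanced double star $S_{p,q}$ with $p+q=2n$ yields $M_\circ(S_{p,q})=|p-q|+(p+q)^2$ and $M_\circ(L(S_{p,q}))=2pq$, and since $2pq<(p+q)^2\le M_\circ(S_{p,q})$ the inequality is robust to how the leaves are distributed between the two centers; I would mention this to justify the one-parameter formulation.

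The main obstacle — though a mild one — is the bookkeeping for $L_{T_n}$: unlike a tree it contains triangles, so the bridge identity $n_u(e)+n_v(e)=|V|$ fails and one must verify carefully which vertices are equidistant from the endpoints of each edge (the "third" vertex of each triangle, and, for the edges $wa$, the fact that every cross-clique vertex is strictly closer to $w$ than to $a$). Once that case analysis is pinned down, the remainder is straightforward arithmetic.
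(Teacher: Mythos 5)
Your computation is correct and complete. Note that the paper itself states this theorem only as an imported result from the cited reference and supplies no proof, so there is nothing to compare against line by line; your direct evaluation ($M_\circ(T_n)=4n^2$ versus $M_\circ(L_{T_n})=2n^2$, with the careful handling of equidistant vertices in the two glued copies of $K_{n+1}$, where the bridge identity $n_u+n_v=|V|$ no longer applies) is a valid, self-contained verification of the claim, and your extension to the unbalanced double star $S_{p,q}$ via $2pq<(p+q)^2\leq M_\circ(S_{p,q})$ is a sound bonus that covers the ambiguity in how the $2n$ leaves are split between the two centers.
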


\subsection{Gutman Index}~\label{sub2sec3}
The Gutman index~\cite{GeneralGutmanindex} is a topological index used in graph theory, particularly in chemical graph theory. For a connected graph $G$, the Gutman index is defined as the sum over all distinct pairs of vertices $u$ and $v$ of the product of their vertex degrees and the distance between them:

\[
Gut(G) = \sum_{u \neq v} d(u) \cdot d(v) \cdot d(u, v)
\]

where $d(u)$ and $d(v)$ are the degrees of vertices $u$ and $v$, respectively, and $d(u, v)$ is the shortest distance (in terms of edges) between $u$ and $v$ in the graph~\cite{KavithaaSKaladeviV,JayaPercivalMazorodze}. This index combines both the connectivity (through vertex degrees) and the proximity (through distances) information of the graph, making it useful in mathematical chemistry for characterizing molecular structures. We generalize the Gutman index~\cite{GeneralGutmanindex} by introducing the general Gutman index of a connected graph $G$ as
$$
G u t_{a, b}(G)=\sum_{\{u, v\} \subseteq V(G)}\left[d_G(u) d_G(v)\right]^a\left[D_G(u, v)\right]^b,
$$
for $a, b \in \mathbb{R}$. If $a=1$ and $b=1$, we get the classical Gutman index. For $a=0$ and $b=1$, we obtain the Wiener index.

Moreover, there are generalizations such as the general Gutman index defined with parameters altering the powers of degrees and distances, and extensions like the Steiner Gutman index which generalizes the distance notion~\cite{GeneralGutmanindex}.
Actually, through Lemma~\ref{lem1GeneralGutmanindex} we observe the term $Gut_{a,b}\left(G + u_1 u_2\right) < Gut_{a,b}(G)$. For any integers $a$ and $b$ is not zero, through Proposition~\ref{pro1GeneralGutmanindex} we observed that. The Sombor index $SO(G)$, the modified Sombor index ${ }^{m} S O(G)$ are define~\cite{Kumar2024Dasvc,Gut2025man} as:
$$
SO(G)=\sum_{u v \in E(G)} \sqrt{d_{G}(u)^{2}+d_{G}(v)^{2}}, \quad { }^{m} S O(G)=\sum_{u v \in E(G)} \frac{1}{\sqrt{d_{G}(u)^{2}+d_{G}(v)^{2}}}.
$$

\begin{lemma}[\cite{GeneralGutmanindex}]~\label{lem1GeneralGutmanindex}
Let $a \leq 0$ and $b \geq 0$, where at least one of $a$ and $b$ is not zero. Consider a connected graph $G$, and let $u_1, u_2$ be any non-adjacent vertices in $G$. Then we have the inequality $Gut_{a,b}\left(G + u_1 u_2\right) < Gut_{a,b}(G)$.
\end{lemma}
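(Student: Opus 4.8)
The plan is to compare $Gut_{a,b}(G+u_1u_2)$ with $Gut_{a,b}(G)$ term by term over the unordered pairs $\{u,v\}\subseteq V(G)$, exploiting the monotonicity of the power functions $x\mapsto x^{a}$ and $y\mapsto y^{b}$ on $(0,\infty)$. First I would record the two structural effects of inserting the edge $u_1u_2$ (note $G+u_1u_2$ is still connected, so all distances remain finite): $(i)$ the degree sequence is nowhere smaller, since $d_{G+u_1u_2}(u_1)=d_G(u_1)+1$, $d_{G+u_1u_2}(u_2)=d_G(u_2)+1$, and every other degree is unchanged; $(ii)$ no distance increases, i.e.\ $D_{G+u_1u_2}(u,v)\le D_G(u,v)$ for all $u,v$, because any $u$–$v$ path in $G$ survives in $G+u_1u_2$, while the pair $\{u_1,u_2\}$ experiences a strict drop from $D_G(u_1,u_2)\ge 2$ to $D_{G+u_1u_2}(u_1,u_2)=1$.

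Next, since $a\le 0$ the map $x\mapsto x^{a}$ is non-increasing on $(0,\infty)$, hence $\big[d_{G+u_1u_2}(u)\,d_{G+u_1u_2}(v)\big]^{a}\le \big[d_G(u)\,d_G(v)\big]^{a}$ for every pair; and since $b\ge 0$ the map $y\mapsto y^{b}$ is non-decreasing on $(0,\infty)$, hence $\big[D_{G+u_1u_2}(u,v)\big]^{b}\le \big[D_G(u,v)\big]^{b}$. All bases here are positive integers, so multiplying these two non-negative inequalities shows that the $\{u,v\}$-term of $Gut_{a,b}(G+u_1u_2)$ is at most the corresponding term of $Gut_{a,b}(G)$; summing over all pairs gives $Gut_{a,b}(G+u_1u_2)\le Gut_{a,b}(G)$.

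It then remains to exhibit one strictly smaller term, and here I would split into the two regimes forced by the hypothesis that $a$ and $b$ are not both zero. If $b>0$, the pair $\{u_1,u_2\}$ suffices: its distance factor drops strictly from $\big[D_G(u_1,u_2)\big]^{b}\ge 2^{b}$ to $1^{b}=1$, while (as $a\le 0$ and the degree product grew) its degree factor does not increase, so this term strictly decreases. If instead $b=0$, then necessarily $a<0$; now all distance factors equal $1$, but for $\{u_1,u_2\}$ the degree product increases from $d_G(u_1)d_G(u_2)$ to $(d_G(u_1)+1)(d_G(u_2)+1)$, so $[\,\cdot\,]^{a}$ strictly decreases. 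In both cases at least one term strictly decreases and no term increases, giving $Gut_{a,b}(G+u_1u_2)<Gut_{a,b}(G)$.

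The argument is monotonicity plus bookkeeping; the only genuine care-point — and thus the main \emph{obstacle} — is the strictness. Because $a\le 0$ makes the degree factors only weakly decrease and $b\ge 0$ makes the distance factors only weakly decrease, one cannot in general argue termwise strict decrease, and must instead pick the correct witness pair in each regime ($b>0$ versus $b=0$). It is precisely here that the assumption ``at least one of $a,b$ is nonzero'' is invoked: it rules out the degenerate case $a=b=0$, in which $Gut_{a,b}\equiv\binom{n}{2}$ is constant and no change occurs.
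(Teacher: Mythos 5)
Your proof is correct: the termwise monotonicity argument (degrees weakly increase and $a\le 0$, distances weakly decrease and $b\ge 0$) combined with the witness pair $\{u_1,u_2\}$ in the two regimes $b>0$ and $b=0$, $a<0$ is exactly the standard argument for this lemma, and your care over where strictness comes from is the right emphasis. Note that the paper itself only quotes this lemma from the cited reference \cite{GeneralGutmanindex} and gives no proof, so there is nothing internal to compare against; your argument stands on its own and matches the natural (and, in the source, the actual) line of reasoning.
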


\begin{proposition}[\cite{GeneralGutmanindex}]~\label{pro1GeneralGutmanindex}
Let $G$ be a connected graph with $n$ vertices. For $a \leq 0$ and $b \geq 0$, where at least one of $a$ and $b$ is not zero, the following holds:
\begin{equation}~\label{eq1GeneralGutmanindex}
Gut_{a,b}(G) \geq \frac{n (n-1)^{2a+1}}{2}.
\end{equation}
\end{proposition}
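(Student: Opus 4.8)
The plan is to obtain the lower bound on $Gut_{a,b}(G)$ by combining Lemma~\ref{lem1GeneralGutmanindex} with the observation that, for fixed order $n$, the complete graph $K_n$ is the \emph{densest} connected graph, and then directly evaluating $Gut_{a,b}(K_n)$. First I would argue that among all connected graphs on $n$ vertices, $K_n$ maximizes $Gut_{a,b}$: starting from an arbitrary connected $G$ that is not complete, there exist non-adjacent vertices $u_1,u_2$, and by Lemma~\ref{lem1GeneralGutmanindex} adding the edge $u_1u_2$ strictly decreases $Gut_{a,b}$; iterating until the graph becomes complete yields $Gut_{a,b}(G) \ge Gut_{a,b}(K_n)$, with equality only when $G \cong K_n$. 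Note this uses the hypothesis $a\le 0$, $b\ge 0$, not both zero, which is exactly the regime in which the lemma applies.

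Next I would compute $Gut_{a,b}(K_n)$ explicitly. In $K_n$ every vertex has degree $n-1$ and every pair of distinct vertices is at distance $1$, so each of the $\binom{n}{2}$ summands equals $\bigl[(n-1)(n-1)\bigr]^a \cdot 1^b = (n-1)^{2a}$. Hence
\[
Gut_{a,b}(K_n) = \binom{n}{2}(n-1)^{2a} = \frac{n(n-1)}{2}\,(n-1)^{2a} = \frac{n(n-1)^{2a+1}}{2},
\]
which is precisely the claimed right-hand side of~\eqref{eq1GeneralGutmanindex}. Combining with the extremality argument from the previous paragraph gives $Gut_{a,b}(G) \ge \frac{n(n-1)^{2a+1}}{2}$ for every connected $G$ on $n$ vertices, with equality if and only if $G\cong K_n$.

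I do not expect a serious obstacle here; the one point needing a little care is the termination and monotonicity of the edge-addition process — one must make sure that each intermediate graph stays connected (it does, since adding edges never destroys connectivity) and that the process terminates after finitely many steps (it does, since each step adds one edge and $K_n$ has only $\binom{n}{2}$ edges). A secondary subtlety is that Lemma~\ref{lem1GeneralGutmanindex} is stated for adding a single edge to a connected graph, so I would phrase the iteration as a finite chain $G = G_0 \subsetneq G_1 \subsetneq \cdots \subsetneq G_t = K_n$ with each $G_{i+1}$ obtained from $G_i$ by adding one edge between non-adjacent vertices, and apply the lemma along each link of the chain to get the strict inequalities $Gut_{a,b}(G_0) > Gut_{a,b}(G_1) > \cdots > Gut_{a,b}(K_n)$; if $G$ is already complete the chain is trivial and equality holds.
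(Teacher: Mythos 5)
Your proof is correct and follows exactly the route the paper indicates: apply Lemma~\ref{lem1GeneralGutmanindex} along a chain of edge additions to reduce to $K_n$, then evaluate $Gut_{a,b}(K_n)=\binom{n}{2}(n-1)^{2a}=\frac{n(n-1)^{2a+1}}{2}$, which also yields the equality characterization stated after the proposition. One wording slip: you say $K_n$ \emph{maximizes} $Gut_{a,b}$, but your own argument (each edge addition strictly decreases the index) shows $K_n$ is the \emph{minimizer} among connected graphs on $n$ vertices; the displayed inequalities and the final conclusion are nevertheless correct.
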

The equality holds if and only if $G$ is the complete graph $K_n$. Furthermore, let $1 \leq x < y$ and $c > 0$. For $a > 1$ or $a < 0$, the inequality $(x+c)^a - x^a < (y+c)^a - y^a$
holds. If $0 < a < 1$, then the inequality is reversed $(x+c)^a - x^a > (y+c)^a - y^a.$ The equality~\eqref{eq2GeneralGutmanindex} holds in Proposition~\ref{pro2GeneralGutmanindex} if and only if $G$ is the complete $k$-partite graph $K_{n_1,n_2,\ldots,n_k}$, where the sizes satisfy $\left|n_i - n_j\right| \leq 1$ for all $1 \leq i < j \leq k$ and $n_1 + n_2 + \cdots + n_k = n$. For the case $a \geq 0$ and $b \leq 0$, where at least one of $a$ and $b$ is not zero, according to~\eqref{eq1GeneralGutmanindex} we have
\[
Gut_{a,b}(G) \leq \frac{n (n-1)^{2a+1}}{2}.
\]
\begin{proposition}[\cite{GeneralGutmanindex}]~\label{pro2GeneralGutmanindex}
Let $a \leq 0$ and $b \geq 0$, where at least one of $a$ and $b$ is not zero. For any $k$-partite graph $G$ with $n$ vertices, where $2 \leq k \leq n$, we have
\begin{equation}~\label{eq2GeneralGutmanindex}
Gut_{a,b}(G) \geq Gut_{a,b}\left(K_{n_1,n_2,\ldots,n_k}\right).
\end{equation}
\end{proposition}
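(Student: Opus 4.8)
The plan is to reduce the assertion to a one-dimensional combinatorial optimization over the sizes of the partite classes, in two stages. For the first stage, fix a partition of $V(G)$ into $k$ nonempty independent sets — such a partition exists because $G$ is $k$-partite and $k\le n$ — with class sizes $m_1,\dots,m_k$. The hypotheses $a\le 0$, $b\ge 0$ (not both zero) are exactly those of Lemma~\ref{lem1GeneralGutmanindex}: inserting an edge between two non-adjacent vertices raises both their degrees, which weakly lowers every factor $[d_G(u)d_G(v)]^{a}$, and can only shorten distances, which weakly lowers every factor $[D_G(u,v)]^{b}$, the net effect being a strict decrease. Adding, one edge at a time, every missing edge between distinct classes therefore gives $Gut_{a,b}(G)\ge Gut_{a,b}(K_{m_1,\dots,m_k})$, with equality only if $G$ already equals that complete $k$-partite graph.

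For the second stage I must show that among complete $k$-partite graphs on $n$ vertices, $Gut_{a,b}$ is minimized, uniquely, by the balanced one (class sizes differing pairwise by at most $1$), which is the graph written $K_{n_1,\dots,n_k}$ in the statement. Since $K_{m_1,\dots,m_k}$ has diameter at most $2$ — distance $1$ between classes, $2$ within a class (using $k\ge 2$) — and a vertex of the class of size $m_i$ has degree $n-m_i$, one obtains the closed form
\[
Gut_{a,b}(K_{m_1,\dots,m_k})=\Phi(m_1,\dots,m_k):=\sum_{i<j} m_i m_j\bigl[(n-m_i)(n-m_j)\bigr]^{a}+2^{b}\sum_{i}\binom{m_i}{2}(n-m_i)^{2a}.
\]
It then suffices to prove: if $m_i\ge m_j+2$, transferring one vertex from class $i$ to class $j$ strictly decreases $\Phi$. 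Iterating this move terminates at the balanced sizes, and concatenating the strict inequalities of the two stages yields the equality characterization stated after the proposition.

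For the transfer step, freeze $s=m_i+m_j$ and all other classes, and set $g(t)=\Phi(\dots,t,\dots,s-t,\dots)$ for $1\le t\le s-1$. Interchanging two classes of a complete multipartite graph gives an isomorphic graph, so $g$ is symmetric about $t=s/2$; hence it is enough to establish strict discrete convexity, $g(t+1)-2g(t)+g(t-1)>0$, which forces the integer minimum to occur exactly at $t=\lfloor s/2\rfloor$. Splitting the $t$-dependent part of $g$ into (i) the within-class terms $2^{b}\bigl[\binom{t}{2}(n-t)^{2a}+\binom{s-t}{2}(n-s+t)^{2a}\bigr]$, (ii) the class-$i$–class-$j$ term $t(s-t)[(n-t)(n-s+t)]^{a}$, and (iii) the terms to each remaining class $\ell$, which sum to $R\bigl(t(n-t)^{a}+(s-t)(n-s+t)^{a}\bigr)$ with $R=\sum_{\ell\neq i,j}m_\ell(n-m_\ell)^{a}\ge 0$ a constant, I would check that the second difference of each group is $\ge 0$, and strictly $>0$ for at least one, using the convexity of $x\mapsto x^{a}$ on $(0,\infty)$ for $a\le 0$, the inequality $2^{b}-1\ge 0$, and the monotonicity of $x\mapsto (x+c)^{a}-x^{a}$ recorded right after Proposition~\ref{pro1GeneralGutmanindex}.

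The hard part will be controlling the sign of the second difference of group (ii), the mixed term $t(s-t)[(n-t)(n-s+t)]^{a}$, which is neither a pure power nor a pure product: the prefactor $t(s-t)$ is concave in $t$ while the power factor is convex, so the two compete. I expect to resolve this with the substitution $t=\tfrac{s}{2}+x$, under which $t(s-t)=\bigl(\tfrac{s}{2}\bigr)^{2}-x^{2}$ and $(n-t)(n-s+t)=\bigl(n-\tfrac{s}{2}\bigr)^{2}-x^{2}$, reducing the claim to convexity in $x$ of a single even function on an interval; since $a\le 0$ the power factor is increasing and convex as a function of $x^{2}$, and one shows its growth dominates that of the quadratic prefactor over the admissible range — the extreme values of $t$ correspond to heavily unbalanced classes, where shrinking the large class sharply increases its vertices' degree factors. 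If a bare-hands estimate proves unwieldy, the alternative is to verify Schur-convexity of $\Phi$ in $(m_1,\dots,m_k)$ directly via the Schur–Ostrowski criterion, $(m_i-m_j)(\partial_{m_i}-\partial_{m_j})\Phi\ge 0$, working with continuous sizes and then invoking integrality of the part sizes to locate the extremum at the balanced composition.
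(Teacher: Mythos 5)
The paper does not actually prove this proposition---it is quoted verbatim from \cite{GeneralGutmanindex}---so your argument has to be judged on its own merits. Your Stage 1 is fine: it is exactly the intended use of Lemma~\ref{lem1GeneralGutmanindex}, and your closed form for $Gut_{a,b}(K_{m_1,\dots,m_k})$ (degrees $n-m_i$, cross-class pairs at distance $1$, within-class pairs at distance $2$) is correct.

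The gap is in Stage 2, in the step you yourself flag as the hard part: the claim that the second difference of group (ii), the mixed term $\varphi(t)=t(s-t)\bigl[(n-t)(n-s+t)\bigr]^{a}$, is nonnegative is simply false, so the plan of verifying convexity group by group cannot be completed. Concretely, take $n=6$, $k=3$, part sizes $(t,\,4-t,\,2)$ and $a=-\tfrac12$: then $\varphi(1)=\varphi(3)=3\cdot15^{-1/2}\approx0.7746$ while $\varphi(2)=4\cdot16^{-1/2}=1$, so $\varphi(1)-2\varphi(2)+\varphi(3)\approx-0.45<0$. The effect is worst, not best, near $a=0$: as $a\to0^-$ the term degenerates to the strictly concave $t(s-t)$, so no estimate can make the power factor ``dominate the quadratic prefactor.'' (The full $\Phi$ is still discretely convex in this example---$g(1)=g(3)\approx4.204$ versus $g(2)=3.75$---but only because the within-class terms of group (i) contribute a surplus that more than covers the deficit of group (ii).) Thus the proof must couple the groups: e.g., compute the single-transfer difference $\Phi(\dots,m_i,\dots,m_j,\dots)-\Phi(\dots,m_i-1,\dots,m_j+1,\dots)$ directly and bound it below by pairing the loss from the cross terms against the gain from the $2^{b}\binom{m_i}{2}(n-m_i)^{2a}$ terms, using $2^{b}\ge1$ and the monotonicity of $x\mapsto(x+c)^{a}-x^{a}$ recorded after Proposition~\ref{pro1GeneralGutmanindex}. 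Your fallback (Schur-convexity of $\Phi$ via the Schur--Ostrowski criterion) is a plausible repair, but it is precisely the unproved content of the proposition, and as written you have neither carried it out nor shown that the derivative test has the right sign; so as it stands the argument is incomplete at its decisive step.
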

The equality~\eqref{eq3GeneralGutmanindex} holds in Theorem~\ref{thm1GeneralGutmanindex} if and only if $G$ is the complete $k$-partite graph $K_{n_1,n_2,\ldots,n_k}$, by considering Theorem~\ref{thm2GeneralGutmanindex} where the part sizes satisfy $\left|n_i - n_j\right| \leq 1$ for all $1 \leq i < j \leq k$ and $n_1 + n_2 + \cdots + n_k = n$.
\begin{theorem}[\cite{GeneralGutmanindex}]~\label{thm1GeneralGutmanindex}
Let $0 \leq a < \frac{1}{2}$ and $b \leq 0$, where at least one of $a$ and $b$ is not zero. For any $k$-partite graph $G$ with $n$ vertices, where $2 \leq k \leq n$, the reverse inequality holds
\begin{equation}~\label{eq3GeneralGutmanindex}
Gut_{a,b}(G) \leq Gut_{a,b}\left(K_{n_1,n_2,\ldots,n_k}\right).
\end{equation}
\end{theorem}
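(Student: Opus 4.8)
The plan is to prove the bound in two stages, in the spirit of Proposition~\ref{pro2GeneralGutmanindex} but with every monotonicity reversed, since here $a\ge 0$ and $b\le 0$. \emph{Stage 1: passing to the complete $k$-partite graph.} First I would record the mirror image of Lemma~\ref{lem1GeneralGutmanindex} for this sign pattern: if $a\ge 0$, $b\le 0$ and at least one of them is nonzero, then inserting an edge between two non-adjacent vertices of a graph $H$ strictly increases $Gut_{a,b}$. Indeed, inserting an edge does not decrease any vertex degree and does not increase any distance, so --- since $t\mapsto t^{a}$ is nondecreasing and $t\mapsto t^{b}$ is nonincreasing --- every summand $[d(u)d(v)]^{a}[D(u,v)]^{b}$ is nondecreasing, and the summand at the endpoints of the new edge strictly increases (its distance drops from $\ge 2$ to $1$ when $b<0$, or its degree product strictly grows when $b=0$ and $a>0$). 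Hence, fixing a $k$-partite $G$ on $n$ vertices, choosing a proper $k$-colouring with classes of sizes $n_1,\dots,n_k$, and inserting every missing edge between classes produces $K_{n_1,\dots,n_k}$, still $k$-partite, with $Gut_{a,b}(G)\le Gut_{a,b}(K_{n_1,\dots,n_k})$ and equality only if $G$ was already complete $k$-partite.

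\emph{Stage 2: balancing the parts.} It remains to maximise, over positive integers with $n_1+\dots+n_k=n$,
\[
\Phi(n_1,\dots,n_k)=Gut_{a,b}(K_{n_1,\dots,n_k})=\sum_{1\le i<j\le k}n_in_j\big[(n-n_i)(n-n_j)\big]^{a}+2^{b}\sum_{i=1}^{k}\binom{n_i}{2}(n-n_i)^{2a},
\]
using that in $K_{n_1,\dots,n_k}$ a vertex of part $i$ has degree $n-n_i$, that vertices in distinct parts lie at distance $1$, and that vertices in the same part lie at distance $2$. I would then show a Robin--Hood transfer strictly increases $\Phi$: if $n_i\ge n_j+2$, replacing $(n_i,n_j)$ by $(n_i-1,n_j+1)$ raises $\Phi$. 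Since iterating such steps terminates at the (nearly) balanced tuple $\lvert n_i-n_j\rvert\le 1$, that tuple is the unique maximiser, and the theorem together with its equality clause follows by combining this with Stage 1.

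\emph{The transfer computation.} Write $p=n_i\ge q+2=n_j+2$, keep the remaining sizes $r_3,\dots,r_k$ fixed, and split $\Delta\Phi$ according to the pair-types that change. The inter-part contribution between the two modified parts is $\Delta_{ij}=(p-1)(q+1)\big[(n-p+1)(n-q-1)\big]^{a}-pq\big[(n-p)(n-q)\big]^{a}>0$, because \emph{both} factors strictly grow: $(p-1)(q+1)-pq=p-q-1\ge 1$ and $(n-p+1)(n-q-1)-(n-p)(n-q)=p-q-1\ge 1$. The contribution of pairs joining a modified part to an unmodified one equals $\big(\sum_{\ell\ge 3}r_\ell(n-r_\ell)^{a}\big)\big([g(q+1)-g(q)]-[g(p)-g(p-1)]\big)$ with $g(t)=t(n-t)^{a}$, and this is $\ge 0$ because $g$ is concave on $[0,n]$ for $0\le a<1$ (one checks $g''(t)=a(n-t)^{a-2}[(a+1)t-2n]\le 0$), so its forward differences are nonincreasing. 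Finally the two intra-part contributions combine into $2^{b}\big([\psi(q+1)-\psi(q)]-[\psi(p)-\psi(p-1)]\big)$ with $\psi(t)=\binom{t}{2}(n-t)^{2a}$, and here one wants the discrete convexity inequality stated just before Proposition~\ref{pro2GeneralGutmanindex} --- $(x+c)^{\alpha}-x^{\alpha}>(y+c)^{\alpha}-y^{\alpha}$ for $x<y$ and $0<\alpha<1$ --- applied with exponent $\alpha=2a$ (admissible precisely because $a<\tfrac12$) together with the damping factor $2^{b}\le 1$ coming from $b\le 0$.

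\emph{Main obstacle.} The genuine difficulty is this last piece: unlike $g$, the weight $\psi(t)=\binom{t}{2}(n-t)^{2a}$ is \emph{not} concave over the whole admissible range $[1,n]$ --- a short computation gives $\psi''>0$ near $t=1$ and $\psi''<0$ near $t=n$ --- so $[\psi(q+1)-\psi(q)]-[\psi(p)-\psi(p-1)]$ can be negative, and $2^{b}$ times it need not be swallowed term by term. The proof must therefore show that any such deficit is dominated by the strictly positive inter-part gain $\Delta_{ij}$, which is of quadratic size (one has $\Delta_{ij}\ge (p-q-1)(n-p)^{a}(n-q)^{a}$), the $\psi$-contribution being suppressed both by $2^{b}\le 1$ and by the subconstant exponent $2a<1$; making this domination quantitative --- for instance by writing the two second differences as double integrals of $g''$ and $\psi''$ and comparing them, or by using $\sum_i n_i(n-n_i)^{a}\ge n$ --- is where the hypotheses $a<\tfrac12$ and $b\le 0$ are consumed in full. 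The boundary cases reduce the mechanism to a cleaner form and I would treat them first: for $a=0$ (so $b<0$) one has $\Phi=\binom{n}{2}-(1-2^{b})\sum_i\binom{n_i}{2}$, manifestly maximal at the balanced tuple by convexity of $t\mapsto\binom{t}{2}$; for $b=0$ (so $0<a<\tfrac12$) one has $Gut_{a,0}(G)=\tfrac12\big[\big(\sum_v d(v)^{a}\big)^{2}-\sum_v d(v)^{2a}\big]$, and the same transfer argument goes through using concavity of $t\mapsto t(n-t)^{a}$ and of $t\mapsto t(n-t)^{2a}$ on $[0,n]$ (the latter again needing $2a<1$) plus the coarse bound on $\sum_v d(v)^a$ to control the square.
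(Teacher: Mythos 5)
The paper does not actually prove this statement: Theorem~\ref{thm1GeneralGutmanindex} is imported verbatim from \cite{GeneralGutmanindex} and is stated here without proof, so there is no in-paper argument to measure yours against. Judged on its own terms, your two-stage plan (complete the multipartite graph, then balance the parts by Robin--Hood transfers) is the natural one, and Stage~1 is correct and complete: for $a\ge 0$, $b\le 0$, inserting an edge cannot decrease any summand $[d(u)d(v)]^{a}[D(u,v)]^{b}$ and strictly increases the summand at the new edge, so $Gut_{a,b}(G)$ is at most $Gut_{a,b}$ of the complete $k$-partite graph on the same parts. Your closed form for $\Phi(n_1,\dots,n_k)$ and the signs of the first two pieces of $\Delta\Phi$ (the inter-part gain $\Delta_{ij}>0$, and the nonnegativity of the cross terms via concavity of $g(t)=t(n-t)^{a}$) are also correct.

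The gap is exactly where you place it, and it is not a removable formality. The intra-part second difference $2^{b}\bigl([\psi(q+1)-\psi(q)]-[\psi(p)-\psi(p-1)]\bigr)$ with $\psi(t)=\binom{t}{2}(n-t)^{2a}$ really can be negative, and its magnitude is not generically small compared with $\Delta_{ij}$: for $a$ near $\tfrac12$ and $b=0$ (for instance $n=1000$, $a=0.49$, $(p,q)=(4,2)$ or $(3,1)$) the deficit and the gain agree to within a fraction of a percent, so the transfer step survives only by a vanishing margin as $a$ approaches $\tfrac12$. Your proposal names this obstacle and gestures at several ways to attack it (integral representations of the second differences, the bound $\Delta_{ij}\ge(p-q-1)[(n-p)(n-q)]^{a}$, a coarse estimate on $\sum_i n_i(n-n_i)^{a}$) but carries none of them out; since this is precisely the point where the hypothesis $a<\tfrac12$ is consumed, the argument is incomplete without it. Two smaller items: you should state explicitly what $K_{n_1,\dots,n_k}$ on the right-hand side denotes (the equality clause in the surrounding text indicates the balanced Tur\'an-type graph, which is why Stage~2 is needed at all), and you should dispose of empty parts and of possible disconnectedness of $G$ before invoking distances in the definition of $Gut_{a,b}$.
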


\begin{theorem}[\cite{GeneralGutmanindex}]~\label{thm2GeneralGutmanindex}
Let $a \leq 0$ and $b \geq 0$, where at least one of $a$ and $b$ is not zero. For any connected graph $G$ with $n$ vertices and chromatic number $\chi$, where $2 \leq \chi \leq n$, we have
\[
Gut_{a,b}(G) \geq Gut_{a,b}\left(K_{n_1,n_2,\ldots,n_\chi}\right).
\]
The equality holds if and only if $G$ is the complete $\chi$-partite graph $K_{n_1,n_2,\ldots,n_\chi}$, where the part sizes satisfy $\left|n_i - n_j\right| \leq 1$ for all $1 \leq i < j \leq \chi$ and $n_1 + n_2 + \cdots + n_\chi = n$.
\end{theorem}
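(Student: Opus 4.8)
The plan is to deduce Theorem~\ref{thm2GeneralGutmanindex} from Proposition~\ref{pro2GeneralGutmanindex} together with Lemma~\ref{lem1GeneralGutmanindex}, the only genuinely new ingredient being the passage from the chromatic number to $\chi$-partiteness. First I would note that a connected graph $G$ of chromatic number $\chi$ admits a proper colouring whose colour classes $V_1,\dots,V_\chi$ are independent sets partitioning $V(G)$; hence $G$ is a spanning subgraph of the complete $\chi$-partite graph $K_{|V_1|,\dots,|V_\chi|}$, i.e.\ $G$ is a $\chi$-partite graph on $n$ vertices, and $2\le\chi\le n$ by hypothesis. Applying Proposition~\ref{pro2GeneralGutmanindex} with $k=\chi$ then gives at once
\[
Gut_{a,b}(G)\ \geq\ Gut_{a,b}\!\left(K_{n_1,\dots,n_\chi}\right),
\]
where $n_1,\dots,n_\chi$ are the balanced part sizes ($|n_i-n_j|\le 1$, $\sum_i n_i=n$), which is the asserted inequality.

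For the equality characterisation I would invoke the equality statement accompanying Proposition~\ref{pro2GeneralGutmanindex}: equality forces $G\cong K_{n_1,\dots,n_\chi}$ with $|n_i-n_j|\le 1$. It then remains only to check the two consistency points the theorem's hypotheses demand — that such a balanced complete $\chi$-partite graph is indeed connected (true whenever $\chi\ge 2$, since vertices in different parts are adjacent and two vertices in a common part share a neighbour) and has chromatic number exactly $\chi$ — and, conversely, that this graph attains equality in Proposition~\ref{pro2GeneralGutmanindex}. Together these give the equality characterisation: equality holds if and only if $G$ is the balanced complete $\chi$-partite graph.

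Should a self-contained argument be wanted in place of citing Proposition~\ref{pro2GeneralGutmanindex}, I would split the work into two stages. In the first stage, starting from $G\subseteq K_{|V_1|,\dots,|V_\chi|}$, one adds the missing edges one at a time; the graph remains connected throughout, so Lemma~\ref{lem1GeneralGutmanindex} applies at each step and $Gut_{a,b}$ strictly drops unless no edge is added, yielding $Gut_{a,b}(G)\ge Gut_{a,b}(K_{|V_1|,\dots,|V_\chi|})$ with equality iff $G$ is already that complete $\chi$-partite graph. In the second stage, one shows that among complete $\chi$-partite graphs on $n$ vertices $Gut_{a,b}$ is minimised uniquely by the balanced one: in a complete multipartite graph every pairwise distance is $1$ or $2$, so $Gut_{a,b}$ reduces to an explicit function of the part sizes $m_1,\dots,m_\chi$ only (a vertex in the part of size $m_i$ having degree $n-m_i$), and if $m_i\ge m_j+2$ then transferring one vertex from the part of size $m_i$ to the part of size $m_j$ strictly decreases this function; this is verified using the convexity-type inequality $(x+c)^a-x^a<(y+c)^a-y^a$ for $a<0$ recorded just after Proposition~\ref{pro1GeneralGutmanindex}, carrying along the distance factor $2^b\ge 1$. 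Iterating the shift terminates at the balanced configuration.

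I expect the main obstacle to be exactly the second stage of this self-contained route — the vertex-shifting computation — where one must simultaneously control how the degree factors $[d(u)d(v)]^a$ and the distance factors $[D_G(u,v)]^b$ vary over \emph{all} vertex pairs as a vertex is relocated between parts and still extract a definite sign. That analysis is precisely what Proposition~\ref{pro2GeneralGutmanindex} already packages, so the shortest correct write-up simply cites it; the one thing not to overlook is the routine but genuinely needed verification that the extremal graph is connected and has chromatic number exactly $\chi$, so that it legitimately belongs to the class of graphs considered in the theorem.
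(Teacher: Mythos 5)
The paper itself contains no proof of this theorem: it is quoted verbatim from \cite{GeneralGutmanindex}, so there is no internal argument to compare against. Your derivation is nevertheless correct and is the natural route: the colour classes of an optimal proper colouring exhibit $G$ as a $\chi$-partite graph, so Proposition~\ref{pro2GeneralGutmanindex} with $k=\chi$ gives the inequality, and its accompanying equality statement gives the characterisation once you check (as you do) that the balanced complete $\chi$-partite graph is connected and has chromatic number exactly $\chi$. Your self-contained fallback --- completing $G$ to $K_{|V_1|,\dots,|V_\chi|}$ one edge at a time via Lemma~\ref{lem1GeneralGutmanindex}, then balancing the part sizes by vertex shifts controlled by the inequality $(x+c)^a-x^a<(y+c)^a-y^a$ --- is exactly the argument that underlies Proposition~\ref{pro2GeneralGutmanindex} in the cited source, and you correctly identify the part-balancing computation as the only step where nontrivial work is hidden.
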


\section{Random Graphs with a Given Degree Sequence}~\label{sec4}
In this section, an \emph{asymptotic degree sequence}~\cite{MolloyM1995Reed} $\mathcal{D} = d_i(n), d_i(n), \ldots$ such that $d_i(n) = 0$ for $i \geq n$ and $\sum_{i=0}^{n} d_i(n) = n$.
Given an asymptotic degree sequence $\mathcal{D}$, we set $\mathcal{D}_n$ to be the degree sequence $\{c_0, c_1, \ldots, c_n\}$, where $c_i = c_i \in \mathbb{N}$ and $\left|\{i : c_i = i\}\right| = d_i(n)$ for each $i \geq 0$. Define $G(n, \mathcal{D}_n)$ to be the set of all graphs with vertex set $[n]$ with degree sequence $\mathcal{D}_n$. A
random graph~\cite{MolloyM1995Reed} on $n$ vertices with degree sequence $\mathcal{D}$ is a uniformly random
member of $G(n, \mathcal{D}_n)$. A graph is said to be random if $A_{ij} (1\leq i<j\leq n)$ are random~\cite{Yuan2023M}. 
\begin{definition}[\cite{Yuan2023M}]
The degree-based topological index of a graph $\mathcal{G}=(\mathcal{V},\mathcal{E})$  is defined as
\[
\mathcal{I}_n=\sum_{\{i,j\}\in \mathcal{E}}f(d_i,d_j),
\]
where $f(x,y)$ is a real function satisfying $f(x,y)=f(y,x)$. 
\end{definition}
\begin{definition}[\cite{Yuan2023M}]
Let $\beta$ be a constant between zero and one, $n$ be an positive integer, and 
\[W=\{w_{ij}\in[\beta,1]| 1\leq i<j\leq n, w_{ji}=w_{ij} ,w_{ii}=0\}.\]
Define a heterogeneous random graph  $\mathcal{G}_n(\beta, W)$ as  
\[\mathbb{P}(A_{ij}=1)=p_nw_{ij},\]
where $A_{ij}$ $(1\leq i<j\leq n)$ are independent, $A_{ij}=A_{ji}$ and $p_n\in(0,1)$.
\end{definition}

Another motivation for studying random graphs on a fixed degree sequence comes from the analysis of the chromatic number of sparse random graphs. This is because a minimally $(r+1)$-chromatic graph must have minimum degree at least $r$. Chvátal's work~\cite{MolloyM1995Reed} indicates that for a random graph  $G_{n,M}$ with a linear number of edges $(c n )$, the expected number of subgraphs with minimum degree 3 is exponentially small if $c < c^* \approx 1.442$, and exponentially large if  $c > c^*$. Further studies suggest that graphs with at least  $1.79n$ edges are minimally 4-chromatic with high probability, hinting that the threshold for 4-chromaticity may depend on more than just subgraphs of minimum degree 3. Luczak's findings~\cite{MolloyM1995Reed} show that a random graph with no vertices of degree less than 2 and at least $\Theta(n)$ vertices of degree greater than 2 almost surely (a.s.) has a unique giant component. This is generalized by the main theorem, which uses a parameter $Q(\mathscr{D}) = \sum_{i=1}^{\infty} d_i - 2\lambda$. If $Q(\mathscr{D}) > 0$, a giant component exists; if $Q(\mathscr{D}) < 0$, all components are small. These results apply to random graph models~\cite{MolloyM1995Reed} like $G_{n,M}$ and $G_{n,p}$ where the degree sequence can be determined accurately and graphs with the same degree sequence are equally likely, allowing verification of known thresholds. The parameter $Q(\mathscr{D})$ reflects the expected change in the number of unknown neighbors during a branching process exploration of a component. A negative $Q(\mathscr{D})$ leads to quick exposure of small components, while a positive $Q(\mathscr{D})$ suggests potential for large components.

\begin{definition}[\cite{MolloyM1995Reed}]
\textit{An asymptotic degree sequence} $\mathfrak{D}$ \textit{is well-behaved if:}

\begin{enumerate}
    \item $\mathfrak{D}$ is feasible and smooth. In this case, $i(i - 2)d_i(n)/n$ tends uniformly to $i(i - 2)\lambda$, i.e., for all $\varepsilon > 0$ there exists $N$ such that for all $n > N$ and for all $i \geq 0$,
    $$\left| \frac{i(i - 2)d_i(n)}{n} - i(i - 2)\lambda \right| < \varepsilon, \quad L(\mathfrak{D}) = \lim_{n \to \infty} \sum_{i \geq 1} i(i - 2) d_i(n)/n.$$
    exists, and the sum approaches the limit uniformly; i.e.,
    \begin{enumerate}
        \item If $L(\mathfrak{D})$ is finite then for all $\varepsilon > 0$ there exists $N$, such that for all $n > N$:
        $$\left| \sum_{i \geq 1} i(i - 2) d_i(n)/n - L(\mathfrak{D}) \right| < \varepsilon.$$
        \item If $L(\mathfrak{D})$ is finite, then, for all $T > 0$, there exists $i^*$, $N$ such that for all $n > N$
        $$\sum_{i > i^*} i(i - 2) d_i(n)/n > T.$$
    \end{enumerate}
\end{enumerate}
\end{definition}
We note that it is an easy exercise to show that if $\mathfrak{D}$ is well-behaved. Then $L(\mathfrak{D}) = \mathcal{Q}(\mathfrak{D}).$
\textit{An asymptotic degree sequence}\cite{MolloyM1995Reed} $\mathfrak{D}$ \textit{is sparse if} 
$$\sum_{i \geq 0} i d_i(n)/n = K + o(1)$$
for some constant $K$.

\begin{lemma}[\cite{MolloyM1995Reed}]
\textit{If a random configuration with a given degree sequence} $\mathfrak{D}$ \textit{meeting the conditions of Theorem 1 [with} $\mathcal{Q}(\mathfrak{D})$ \textit{possibly unbounded] has a property} P \textit{with probability at least} $1 - z^*$ \textit{for some constant} $z < 1$, \textit{then a random graph with the same degree sequence a.s. has} P.
\end{lemma}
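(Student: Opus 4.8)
\emph{Proof idea.} The plan is to run the standard \emph{configuration model} transfer argument. Generate a random object with degree sequence $\mathfrak{D}_n$ by attaching to each vertex a set of labelled half-edges (stubs) equal in number to its prescribed degree, and then choosing a uniformly random perfect matching of the resulting $2m$ stubs; contracting each matched pair to an edge yields a random multigraph $\mathcal{C}_n$, the \emph{random configuration}, which may carry loops and parallel edges. The bookkeeping fact that makes the method work is that every \emph{simple} graph $H$ with degree sequence $\mathfrak{D}_n$ arises from exactly $\prod_{v}(d_H(v))!$ configurations — the permutations of the stubs at each vertex — a number independent of $H$; hence conditioning $\mathcal{C}_n$ on the event $S_n=\{\mathcal{C}_n\text{ is simple}\}$ reproduces precisely the uniform distribution on $G(n,\mathcal{D}_n)$, i.e.\ the random graph $\mathcal{G}_n$. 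Therefore, for every property $P$,
\[
\mathbb{P}[\mathcal{G}_n\notin P]=\mathbb{P}[\,\mathcal{C}_n\notin P\mid S_n\,]=\frac{\mathbb{P}[\{\mathcal{C}_n\notin P\}\cap S_n]}{\mathbb{P}[S_n]}\le\frac{\mathbb{P}[\mathcal{C}_n\notin P]}{\mathbb{P}[S_n]}.
\]

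Second, I would establish the quantitative core: under the hypotheses of Theorem 1 ($\mathfrak{D}$ feasible, smooth, and sparse), the probability $\mathbb{P}[S_n]$ is bounded below by a positive constant $c=c(\mathfrak{D})$ for all large $n$. The classical route is the method of factorial moments applied to $\mathcal{C}_n$: the number of loops and the number of pairs of parallel edges each have expectation $O(1)$ — the governing sums, such as $\tfrac{1}{2m}\sum_{v}\binom{d(v)}{2}$ for loops, are controlled by the sparseness assumption — and these counts converge jointly to independent Poisson variables, so $\mathbb{P}[S_n]\to e^{-\lambda_1-\lambda_2}>0$. When $\mathcal{Q}(\mathfrak{D})$ is allowed to be unbounded one first deletes the (at most $o(n)$) vertices of exceptionally large degree, runs the estimate on the truncated sequence, and checks that reinstating those vertices alters $\mathbb{P}[S_n]$ by at most a constant factor; this is the delicate case.

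Putting the pieces together completes the argument: by hypothesis $\mathbb{P}[\mathcal{C}_n\notin P]\le z^{*}\to 0$, so the displayed inequality gives $\mathbb{P}[\mathcal{G}_n\notin P]\le z^{*}/c\to 0$ as $n\to\infty$, i.e.\ $\mathcal{G}_n$ almost surely has $P$. The step I expect to be the genuine obstacle is the uniform lower bound $\mathbb{P}[S_n]\ge c>0$ (and its substitute in the unbounded-$\mathcal{Q}$ regime): the conditioning identity and the final division are routine, but all the care goes into verifying that the conditions imported from Theorem 1 — feasibility, smoothness, sparseness, and the controlled behaviour of $\sum_{i\ge 1}i(i-2)d_i(n)/n$ — are exactly strong enough to keep the expected numbers of loops and multiple edges bounded even when individual degrees, and hence $\mathcal{Q}(\mathfrak{D})$, are permitted to diverge.
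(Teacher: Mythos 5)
The paper does not prove this lemma at all: it is imported verbatim (with a citation) from Molloy and Reed, so there is no in-paper argument to compare yours against. Judged against the original source, your skeleton is the right one --- the configuration model, the observation that every simple graph with degree sequence $\mathfrak{D}_n$ corresponds to the same number $\prod_v d(v)!$ of configurations, and the resulting identity $\mathbb{P}[\mathcal{G}_n\notin P]\le \mathbb{P}[\mathcal{C}_n\notin P]/\mathbb{P}[S_n]$ --- but the quantitative core you propose is the wrong one for \emph{this} lemma.

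The step that fails is the claim that $\mathbb{P}[S_n]\ge c>0$. A constant lower bound on the probability of simplicity is exactly what is \emph{not} available here: the expected number of loops is of order $\tfrac{1}{2m}\sum_v\binom{d(v)}{2}$, which is essentially $\mathcal{Q}(\mathfrak{D})$ up to additive constants, and the lemma explicitly allows $\mathcal{Q}(\mathfrak{D})$ to be unbounded. In that regime $\mathbb{P}[S_n]\to 0$, and no truncation-and-reinstatement argument can rescue a constant factor (reinserting the high-degree vertices genuinely degrades $\mathbb{P}[S_n]$ by a factor tending to $0$). This is precisely why the hypothesis of the lemma demands an \emph{exponentially} small failure probability --- the ``$1-z^*$'' in the paper's transcription should read $1-z^n$ for a constant $z<1$, as in the original --- rather than mere convergence to $1$. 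The correct quantitative input is the much weaker bound $\mathbb{P}[S_n]\ge e^{-o(n)}$ under the sparseness and smoothness conditions of Theorem~1, which still suffices because $z^n/e^{-o(n)}\to 0$. (The constant-lower-bound version you describe is the content of the companion lemma, the one that additionally assumes $\mathcal{Q}(\mathfrak{D})<\infty$ and in exchange only needs the configuration to have $P$ a.s.) As written, your final division $z^*/c\to 0$ also does not parse: a fixed constant $z^*<1$ divided by a constant does not tend to $0$, which is another symptom that the exponential rate in the hypothesis is doing essential work you have not used.
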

\begin{lemma}[\cite{MolloyM1995Reed}]
 If a random configuration with a given degree sequence $\mathfrak{D}$ meeting the conditions of Theorem 1 a.s. has a property P, \textit{and if} $\mathcal{Q}(\mathfrak{D}) < \infty$, then a random graph with the same degree sequence a.s. has P.
\end{lemma}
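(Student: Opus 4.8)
The plan is to reduce the statement to the classical fact that a uniformly random graph with a prescribed degree sequence is a random configuration conditioned on simplicity, and then to show that this conditioning event has probability bounded away from zero precisely when $\mathcal{Q}(\mathfrak{D})<\infty$. Concretely, let $F_n$ be a random configuration (a uniform perfect matching of the stubs prescribed by $\mathfrak{D}_n$) and let $\mathrm{Simp}_n$ be the event that the multigraph it projects to has no loop and no multiple edge. The first step is to record that every labelled simple graph with degree sequence $\mathfrak{D}_n$ arises from exactly $\prod_{v}d(v)!$ matchings, a number independent of the graph; hence, conditioned on $\mathrm{Simp}_n$, the projection of $F_n$ is uniform on $G(n,\mathfrak{D}_n)$. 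From this I get, for any property $P$,
\[
\mathbb{P}\!\left(G(n,\mathfrak{D}_n)\notin P\right)=\mathbb{P}\!\left(F_n\notin P\mid\mathrm{Simp}_n\right)\le\frac{\mathbb{P}(F_n\notin P)}{\mathbb{P}(\mathrm{Simp}_n)} .
\]
Since by hypothesis the configuration a.s.\ has $P$, the numerator tends to $0$, so the whole proof comes down to establishing $\liminf_{n\to\infty}\mathbb{P}(\mathrm{Simp}_n)>0$; then the right-hand side tends to $0$, which is exactly the conclusion. This is precisely the point that separates this lemma from the previous one: when $\mathbb{P}(\mathrm{Simp}_n)$ is only exponentially small one needs an exponentially good bound on $\mathbb{P}(F_n\notin P)$, whereas here $\mathcal{Q}(\mathfrak{D})<\infty$ will keep $\mathbb{P}(\mathrm{Simp}_n)$ bounded below by a positive constant.

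To prove the lower bound on $\mathbb{P}(\mathrm{Simp}_n)$ I would count short cycles directly: let $L_n$ be the number of loops and $D_n$ the number of parallel-edge pairs created by the matching, and show via the method of moments that $(L_n,D_n)$ converges jointly to independent Poisson variables with parameters
\[
\mu_1=\lim_{n\to\infty}\frac{\sum_{i}i(i-1)d_i(n)}{2\sum_{i}i\,d_i(n)},\qquad
\mu_2=\lim_{n\to\infty}\frac14\left(\frac{\sum_{i}i(i-1)d_i(n)}{\sum_{i}i\,d_i(n)}\right)^{\!2}.
\]
These limits are finite because the conditions of Theorem~1 include sparseness, i.e.\ $\sum_i i\,d_i(n)/n\to K\in(0,\infty)$, while $\mathcal{Q}(\mathfrak{D})<\infty$ together with the identity $i(i-2)=i(i-1)-i$ forces $\sum_i i(i-1)d_i(n)/n$ to converge to a finite limit. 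It then follows that $\mathbb{P}(\mathrm{Simp}_n)=\mathbb{P}(L_n=0,\,D_n=0)\to e^{-\mu_1-\mu_2}>0$, which closes the argument.

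The hard part will be the moment computation for $(L_n,D_n)$, and in particular controlling the contribution of high-degree vertices: a priori a few vertices of very large degree could spoil both the expectation estimates and their concentration. This is where the uniform-convergence clauses in the definition of a well-behaved sequence — the control of the tails $\sum_{i>i^*}i(i-2)d_i(n)/n$ — do the work. The scheme I would follow is: fix $\varepsilon>0$, truncate the degree sequence at a threshold $i^*=i^*(\varepsilon)$ so that the high-degree tail contributes at most $\varepsilon$ to every sum that appears, carry out the (routine) enumeration of loops and parallel edges for the bounded-degree part — where each factorial moment converges to the corresponding product of the $\mu_j$'s — and then let $\varepsilon\to0$. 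A minor technical point to dispatch along the way is the parity requirement that $\sum_i i\,d_i(n)$ be even, so that the configuration is well defined; this is subsumed in feasibility. Everything else — the conditioning identity and the displayed transfer inequality — is elementary.
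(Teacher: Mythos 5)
The paper does not actually prove this lemma: it is quoted verbatim from Molloy and Reed \cite{MolloyM1995Reed}, so there is no in-paper argument to compare against. Your proposal is correct and is essentially the argument of the cited source: realize the uniform random graph as the configuration model conditioned on simplicity (using that each simple graph corresponds to exactly $\prod_v d(v)!$ configurations), transfer the failure probability via $\mathbb{P}(F_n\notin P\mid\mathrm{Simp}_n)\le \mathbb{P}(F_n\notin P)/\mathbb{P}(\mathrm{Simp}_n)$, and show $\liminf_n\mathbb{P}(\mathrm{Simp}_n)>0$ by Poisson approximation of loops and parallel edges, with $\mathcal{Q}(\mathfrak{D})<\infty$ plus sparseness guaranteeing the finiteness of $\sum_i i(i-1)d_i(n)/n$ via $i(i-2)=i(i-1)-i$.
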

Given $\mathcal{Q}(\mathfrak{D}) < \infty$, set $\nu = \mathcal{Q}(\mathfrak{D}) / K$ and set $R = 150/\nu^2$. 
Let $F$ be a random configuration~\cite{MolloyM1995Reed} with $n$ vertices and degree sequence $\mathfrak{D}$, meeting the conditions. If $\mathcal{Q}(\mathfrak{D}) < \infty$ and, for some function $0 \leq \omega(n) \leq n^{1/8 - \varepsilon}$, $F$ has no vertices of degree greater than $o(n)$, then $F$ has no components with more than $a = \lfloor R \omega(n) \log n \rfloor$ vertices.
\begin{theorem}[\cite{MolloyM1995Reed}]
If $H$ is a minimally $k$-chromatic graph with minimum degree $k - 1$, then $L(H)$ has no even cycles whose vertices do not induce a clique.
\end{theorem}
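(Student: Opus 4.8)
The plan is to read $L(H)$ as the \emph{low subgraph} of $H$, i.e.\ the subgraph induced by the vertices of degree exactly $k-1$ (the hypothesis $\delta(H)=k-1$ only serves to make this vertex set nonempty; when it is empty there is nothing to prove), and to reduce the assertion to Gallai's structure theorem for colour-critical graphs: \emph{every block of $L(H)$ is either a complete graph or an odd cycle}, that is, $L(H)$ is a ``Gallai forest.'' Granting this, suppose $Z$ is an even cycle in $L(H)$; then $Z$ has length $2\ell\ge 4$, so $Z$ is $2$-connected and therefore lies inside a single block $B$ of $L(H)$. An odd cycle contains no proper cycle, so $B$ cannot be an odd cycle; hence $B$ is complete, and since $V(Z)\subseteq V(B)$ and $L(H)[V(B)]=B$ is complete, $L(H)[V(Z)]$ is complete too. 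Thus every even cycle of $L(H)$ has its vertex set inducing a clique, which is precisely the claim.

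It then remains to show $L(H)$ is a Gallai forest, and the route is a list-colouring argument. Suppose, for contradiction, that some component $C$ of $L(H)$ is not a Gallai tree. Since $H$ is minimally $k$-chromatic, $\chi(H-v)\le k-1$ for every vertex $v$, so $H-V(C)$ admits a proper colouring $c$ with colours in $[k-1]$. For each $v\in C$ put
\[
L(v)=[k-1]\setminus\{\,c(u):u\in N_H(v)\setminus V(C)\,\}.
\]
The $H$-neighbours of $v$ lying in $C$ are exactly its $d_C(v)$ neighbours inside the component $C$, so $|N_H(v)\setminus V(C)|=(k-1)-d_C(v)$, whence $|L(v)|\ge d_C(v)$. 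Now I invoke the Erd\H{o}s--Rubin--Taylor characterisation: a connected graph is \emph{degree-choosable}, i.e.\ colourable from every list assignment with $|L(v)|\ge d(v)$, unless every one of its blocks is a complete graph or an odd cycle. As $C$ is not of that form, it is degree-choosable, hence $L$-colourable; splicing such a colouring of $C$ to $c$ on $H-V(C)$ gives a proper $(k-1)$-colouring of $H$, contradicting $\chi(H)=k$. Therefore every component of $L(H)$ is a Gallai tree, which completes the reduction.

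The main obstacle is the Erd\H{o}s--Rubin--Taylor lemma itself, or rather the direction ``connected and not a Gallai tree $\Rightarrow$ degree-choosable.'' I would prove it by induction on $|V(C)|$: if some vertex carries a list strictly larger than its degree, colour that vertex last, having first coloured each component of $C$ minus that vertex (each such component contains a neighbour of the vertex and hence a vertex carrying a surplus list, which makes the component colourable by a greedy leaf-to-root ordering); this reduces matters to $|L(v)|=d_C(v)$ for all $v$, and then, via the block--cut tree, to the case that $C$ is $2$-connected, not complete, and not an odd cycle. In that case $C$ is either an even cycle -- disposed of by the $2$-choosability of even cycles -- or contains a vertex $z$ of degree at least $3$, and the delicate point is to order $V(C)$, and fix in advance the colours given to two neighbours of $z$ joined by a path in $C-z$, so that the greedy colouring still leaves $z$ a free colour; this ``colour-saving'' step is exactly where being neither complete nor an odd cycle is spent. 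A reader content to cite the lemma may regard the argument above as complete.
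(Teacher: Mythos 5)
The paper offers no proof of this statement at all: it is quoted verbatim from Molloy--Reed, who in turn attribute it to Gallai and also give no proof. So there is no in-paper argument to compare against, and your proposal has to stand on its own --- which it does. Your reading of $L(H)$ as the subgraph induced by the vertices of degree $k-1$ is the one Molloy and Reed intend, and the two-step structure is sound: (i) every cycle, being $2$-connected, sits inside one block, an odd-cycle block contains no even cycle, and a complete block forces the vertex set of any even cycle inside it to induce a clique; (ii) the block structure itself (every block of $L(H)$ a clique or an odd cycle) follows from criticality via the list-colouring argument. The key computation there is correct: for $v$ in a component $C$ of $L(H)$ one has $d_H(v)=k-1$ and $N_H(v)\cap L(H)\subseteq V(C)$, so $\lvert L(v)\rvert\ge (k-1)-\bigl((k-1)-d_C(v)\bigr)=d_C(v)$, and a degree-choosable $C$ would extend a $(k-1)$-colouring of $H-V(C)$ to all of $H$, contradicting $\chi(H)=k$. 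The only piece you do not fully prove is the Erd\H{o}s--Rubin--Taylor characterisation of degree-choosable connected graphs; your sketch of it is the standard Brooks-style induction and is reasonable, but as written it is an outline rather than a proof, so either cite that lemma explicitly or expand the ``colour-saving'' step for the $2$-connected, non-complete, non-cycle case. With that caveat, the argument is correct and is in fact more informative than the statement it proves, since Gallai's block theorem is strictly stronger than the even-cycle formulation used here.
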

\begin{corollary}[\cite{MolloyM1995Reed}]
There exists $\delta > 0$ such that if $G$ is a random graph on $n$ vertices and $\lfloor \delta n \rfloor$ edges, then the expected number of minimally $4$-chromatic subgraphs of $G$ is exponentially small, while the expected number of subgraphs of $G$ with minimum degree at least $3$ is exponentially large in $n$.
\end{corollary}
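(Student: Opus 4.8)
\noindent The plan is to carve out a nonempty interval of admissible $\delta$, treating the two assertions separately. Write $M=\lfloor\delta n\rfloor$ and recall the standard first-moment estimates for $G=G_{n,M}$: a prescribed edge of $K_n$ lies in $G$ with probability $M/\binom n2=(2\delta/n)(1+o(1))$, and a prescribed set of $e$ edges lies in $G$ with probability at most $(2\delta/n)^{e}(1+o(1))^{e}$.

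\emph{Exponentially many subgraphs of minimum degree at least $3$.} This is the easy half, and it is exactly the classical first-moment computation of Chvátal and Luczak recalled in Section~\ref{sec4}: one estimates the expected number of subgraphs of $G$ of minimum degree $\ge3$ by summing, over a candidate vertex set and over its candidate edge sets, the probability that all those edges are present, and optimising the exponential rate shows this expectation is exponentially large in $n$ once $\delta$ exceeds the critical constant $c^\ast\approx1.442$. I would simply quote this and record the first requirement $\delta>c^\ast$.

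\emph{Exponentially few minimally $4$-chromatic subgraphs.} Here the Theorem stated just before the Corollary does the real work. A minimally $4$-chromatic graph $H$ is $4$-vertex-critical, so $\delta(H)\ge3$; when $\delta(H)\ge4$ a copy of $H$ on $t$ vertices already carries at least $2t$ edges, and when $\delta(H)=3$ the Theorem constrains the even-cycle structure of $L(H)$ and thereby confines $H$ to a sparse, highly structured family. In either case one extracts that every minimally $4$-chromatic graph on $t$ vertices has at least $c_4 t$ edges for a constant $c_4>\tfrac32$, and that there are at most $(C_1 t)^{c_4 t}$ labelled such graphs of order $t$. Feeding this into the first moment, the expected number of minimally $4$-chromatic subgraphs of $G$ is bounded by
\[
\sum_{t\ge4}\binom{n}{t}\,(C_1 t)^{c_4 t}\Bigl(\frac{2\delta}{n}\Bigr)^{c_4 t}\;\le\;\sum_{t\ge4}\Bigl(C_2\,\delta^{c_4}\,\bigl(\tfrac{t}{n}\bigr)^{c_4-1}\Bigr)^{t},
\]
whose terms of order $t=\Theta(n)$ — the ones that matter for the contrast — decay exponentially in $n$ as soon as $\delta$ lies below a threshold $\delta_0$; the refinement recalled in Section~\ref{sec4} places $\delta_0$ near $1.79$. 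This yields the second requirement $\delta<\delta_0$.

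\emph{Conclusion and main obstacle.} Since $c^\ast<\delta_0$, any $\delta\in(c^\ast,\delta_0)$ meets both requirements at once, and the Corollary follows. The genuine obstacle is the enumeration behind the second half: one must extract from the structure Theorem, together with the elementary minimum-degree bound, an edge count $e(H)\ge c_4 t$ with $c_4$ \emph{strictly} above $\tfrac32$ and a bound of the form $(C_1 t)^{c_4 t}$ on the number of candidate structures, holding uniformly in $t$, so that the first-moment sum is controlled at $t=\Theta(n)$. A naive count of near-cubic graphs gives only the exponent $\tfrac32$ and would drag $\delta_0$ below $c^\ast$, collapsing the interval; showing that the forbidden-even-cycle condition genuinely buys either the extra density or the smaller structure count is the crux.
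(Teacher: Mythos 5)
The paper gives no proof of this corollary --- it is quoted from Molloy and Reed --- so there is no internal argument to measure yours against; I can only judge the proposal on its own terms. Its architecture is right (first moment for both counts, with the structure theorem on minimally $4$-chromatic graphs of minimum degree $3$ supplying the restriction that separates the two thresholds), but it stops exactly at the step that makes the corollary true, and you say so yourself. Your displayed bound uses the count of \emph{all} labelled graphs on $t$ vertices with about $c_4 t$ edges, namely $(C_1t)^{c_4t}$ with $C_1$ of order $e/(2c_4)$ coming from $\binom{t^2/2}{c_4t}$. For $t=\alpha n$ the $t$-th term of your right-hand side is
\[
\Bigl(C_2\,\delta^{c_4}\,\alpha^{\,c_4-1}\Bigr)^{\alpha n},
\]
and with these constants the bracket already exceeds $1$ for $\delta$ well below $c^*\approx 1.442$ (indeed it must: the expected number of subgraphs with minimum degree $3$, all of which have at least $\tfrac32 t$ edges, is exponentially large for $\delta>c^*$, and raising the edge requirement from $\tfrac32 t$ to $(\tfrac32+\tfrac1{26})t$ changes the rate function only continuously). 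So under the estimates you actually write down, the interval $(c^*,\delta_0)$ is empty; the assertion that ``the refinement recalled in Section~\ref{sec4} places $\delta_0$ near $1.79$'' is not something your computation delivers, and the $1.79n$ in that section refers to a different quantity (an edge count for the ambient random graph, not a first-moment threshold obtained from your sum).

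The missing content is precisely what you label ``the crux'': one must use the theorem preceding the corollary --- the Gallai-type statement that in a minimally $4$-chromatic graph of minimum degree $3$ the subgraph $L(H)$ induced by the degree-$3$ vertices has no even cycles outside cliques, hence is a highly constrained (Gallai-forest-like) structure --- to show that the number of labelled candidate graphs on $t$ vertices with $m$ edges is exponentially smaller than $\binom{\binom{t}{2}}{m}$, by a factor large enough to overturn an unrestricted first moment that is exponentially large at $t=\Theta(n)$. Neither the edge bound $e(H)\ge c_4t$ with $c_4>\tfrac32$ nor the crude structure count does this, as your own closing remark concedes. Deferring that enumeration rather than carrying it out leaves the two halves of the corollary unable to coexist for any single $\delta$, so the proposal is a correct plan with a genuine gap at its decisive point.
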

\begin{corollary}[\cite{RatherImran2024Diene}]
Let $G$ be a connected graph of order $n \geq 2$, $t = 2$ and let $F$ be its forgotten topological index. Then
\begin{equation}~\label{eqq1RatherImran2024Diene}
s(S(G)) \leq \mu_1 + \sqrt{\lVert S(G) \rVert_F^2 - \mu_1^2} = \mu_1 + \sqrt{2F - \mu_1^2} \leq 2\sqrt{F} = \frac{2\lVert S(G) \rVert_F}{\sqrt{2}},
\end{equation}
with equalities holding on both sides if and only if $G$ is the complete bipartite graph.
\end{corollary}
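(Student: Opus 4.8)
The plan is to read the displayed chain as a Koolen--Moulton-type estimate for the spectral spread $s(S(G))=\mu_1-\mu_n$ of the Sombor matrix $S(G)$, i.e.\ the real symmetric matrix with $(u,v)$-entry $\sqrt{d_G(u)^2+d_G(v)^2}$ for $uv\in E(G)$ and $0$ elsewhere (the value $t=2$ singling this matrix out among the generalised Sombor family). The first step is to normalise the two invariants occurring in the bound. Since $S(G)$ has zero diagonal, $\sum_{i=1}^n\mu_i=\operatorname{tr}S(G)=0$, and as $G$ is connected with $n\ge 2$ the matrix is nonzero, so $\mu_1>0>\mu_n$. Summing the squares of all entries gives $\lVert S(G)\rVert_F^2=\sum_{i=1}^n\mu_i^2=2\sum_{uv\in E(G)}\bigl(d_G(u)^2+d_G(v)^2\bigr)=2F$ by the edge form of the forgotten index; this yields at once the two stated identities $\sqrt{\lVert S(G)\rVert_F^2-\mu_1^2}=\sqrt{2F-\mu_1^2}$ and $2\sqrt F=\tfrac{2}{\sqrt2}\lVert S(G)\rVert_F$, so only the two genuine inequalities remain to be proved.

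For the left-hand inequality, write $s(S(G))=\mu_1+|\mu_n|$ and bound $\mu_n^2\le\sum_{i=2}^n\mu_i^2=\lVert S(G)\rVert_F^2-\mu_1^2=2F-\mu_1^2$, so $|\mu_n|\le\sqrt{2F-\mu_1^2}$ and hence $s(S(G))\le\mu_1+\sqrt{2F-\mu_1^2}$; equality here forces $\mu_2=\cdots=\mu_{n-1}=0$. For the right-hand inequality, set $x=\mu_1^2\in(0,2F]$, so that $\mu_1+\sqrt{2F-\mu_1^2}=\sqrt x+\sqrt{2F-x}$; squaring reduces $\sqrt x+\sqrt{2F-x}\le 2\sqrt F$ to $\sqrt{x(2F-x)}\le F$, i.e.\ to $(F-x)^2\ge 0$, which holds automatically, with equality iff $x=F$, that is $\mu_1^2=F=\tfrac12\lVert S(G)\rVert_F^2$.

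For the characterisation of equality, suppose both inequalities are tight. The left one gives $\mu_2=\cdots=\mu_{n-1}=0$; combining $\operatorname{tr}S(G)=0$ with $\mu_1^2=\tfrac12\sum_i\mu_i^2$ from the right one forces $\mu_n=-\mu_1$, so $S(G)$ has spectrum $\{\mu_1,0^{(n-2)},-\mu_1\}$ and rank $2$. Writing the spectral decomposition $S(G)=\mu_1\,vv^{\top}+\mu_n\,ww^{\top}$ with $v\perp w$, the zero-diagonal condition forces $v_i^2/w_i^2$ to be the constant $-\mu_n/\mu_1>0$ wherever $w_i\neq0$, so $v_i=\pm\sqrt{-\mu_n/\mu_1}\,w_i$; partitioning the vertices according to this sign, one checks that $S(G)_{ij}=0$ inside each class and $S(G)_{ij}\neq0$ whenever $i,j$ lie in different classes with $w_i,w_j\neq0$, i.e.\ the support of $S(G)$ is a complete bipartite graph. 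Connectedness of $G$ then forces $G\cong K_{p,q}$. Conversely, for $K_{p,q}$ the colour-class partition is equitable with $2\times2$ quotient matrix having zero diagonal and off-diagonal entries $q\sqrt{p^2+q^2}$ and $p\sqrt{p^2+q^2}$; by the equitable case of the interlacing theorem its eigenvalues $\pm\sqrt{pq(p^2+q^2)}$ lie in $\operatorname{spec}S(G)$, and since $\lVert S(G)\rVert_F^2=2F=2pq(p^2+q^2)$ is exhausted by these two, all remaining eigenvalues vanish. Hence $s(S(G))=2\sqrt{pq(p^2+q^2)}=\mu_1+\sqrt{2F-\mu_1^2}=2\sqrt F$, so both inequalities are equalities precisely for the complete bipartite graphs.

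The two inequalities are routine (Cauchy--Schwarz, really just $\mu_n^2\le\sum_{i\ge2}\mu_i^2$, together with a one-variable squaring step). The point needing the most care is the structural half of the equality analysis: deducing from $\operatorname{rank}S(G)=2$ that $G$ is complete bipartite. The cleanest device is the rank-$2$ outer-product form above, where the positivity of the weights $\sqrt{d_u^2+d_v^2}$ is harmless, constraining only the magnitudes $|\mu_1|,|\mu_n|$ and the entries of $v,w$ but not the zero/nonzero pattern, and where connectedness eliminates the remaining (disconnected) sign configurations, leaving exactly $K_{p,q}$.
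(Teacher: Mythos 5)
The paper does not actually prove this statement: it is quoted verbatim as a corollary of \cite{RatherImran2024Diene}, with no argument supplied beyond the surrounding remarks that record the identity $\lVert S(G)\rVert_F^2=\sum_i\mu_i^2$ and the interlacing theorem for equitable partitions. So there is no in-paper proof to compare against; I can only assess your argument on its own terms, and it is correct and complete. Your normalisation $\lVert S(G)\rVert_F^2=2F$ is exactly the identity the paper cites, the two inequalities are handled by the standard Koolen--Moulton-type estimates ($\mu_n^2\le\sum_{i\ge 2}\mu_i^2$ and the concavity step $\sqrt{x}+\sqrt{2F-x}\le 2\sqrt F$), and your equality analysis is the part that genuinely adds value: the reduction to a rank-$2$ spectrum $\{\mu_1,0^{(n-2)},-\mu_1\}$, the outer-product argument showing the support of such a zero-diagonal matrix is complete bipartite, and the converse via the equitable $2\times 2$ quotient of $K_{p,q}$ (which matches the interlacing theorem the paper quotes from the same source). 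One tiny point worth making explicit in the forward direction: if some $w_i=0$ then the diagonal condition also forces $v_i=0$, so row $i$ of $S(G)$ vanishes and vertex $i$ would be isolated, contradicting connectedness --- this is what guarantees every vertex lands in one of the two classes and the bipartition is complete.
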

Actually, according to~\eqref{eqq1RatherImran2024Diene} for  a connected graph $G$ of order $n \geq 2$, $t \geq 2$ be an even integer and $k \geq 2$ be the multiplicity of $\mu_n$. Then
$$s(S(G)) \leq \mu_1 + \sqrt{\frac{\lVert S(G) \rVert_F^p - \mu_1^p}{k}},$$
with equality if and only if $G$ is the regular complete $(k+1)$-multipartite graph. The study utilizes the Frobenius norm $\lVert S(G) \rVert_F$, which is defined as:  
  \[
  \lVert S(G) \rVert_F = \sqrt{\sum_{i=1}^{n} \mu_i^2}
  \]
This norm~\cite{RatherImran2024Diene} is used to establish upper bounds on spectral quantities.  
The concept of graph energy, a measure of the sum of absolute eigenvalues, is also referenced indirectly.

\section{Discuss Extremal Vertex-Degree Function Index}~\label{sec5}
The sum lordeg index $SL(G)$ and the variable sum exdeg index $SEI_a(G)$ are two of the Adriatic indices defined as
\[
SL(G) = \sum_{u \in V(G)} \deg_G(u) \sqrt{\ln \deg_G(u)} 
= \sum_{\substack{u \in V(G) \\ \deg_G(u) \geq 2}} \deg_G(u) \sqrt{\ln \deg_G(u)}
\]
and
\[
SEI_a(G) = \sum_{u \in V(G)} \deg_G(u) \, a^{\deg_G(u)}, \quad a > 0, \quad a \neq 1,
\]
respectively~\cite{Extremalvertexdegree}. Notice that $\Pi_1(G)$ and $\Pi_2(G)$ attain their maximum (or minimum) values if and only if
\[
\ln \Pi_1(G) = 2 \sum_{u \in V(G)} \ln \deg_G(u), \quad \ln \Pi_2(G) = \sum_{u \in V(G)} \deg_G(u) \ln \deg_G(u).
\]
are maximum (or minimum), respectively.
\begin{corollary}[\cite{Gut2025man}]
Let $G$ be a regular graph on $n$ vertices, of the degree $r$. Then the Sombor index of a thorn-regular graph of $G$ is
$$
S O\left(G^{*}\right)=\frac{\sqrt{2} n r}{2}(r+p)+n p \sqrt{(r+p)^{2}+1}
$$
\end{corollary}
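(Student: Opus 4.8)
The plan is to compute $SO(G^*)$ directly from its definition by partitioning the edge set of $G^*$ into the edges inherited from $G$ and the newly attached thorn edges, and to use the regularity of $G$ together with the defining construction of a thorn-regular graph to pin down all the relevant vertex degrees.

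First I would record the degree data. Since $p_1 = p_2 = \cdots = p_n = p$, by the definition of a thorn-regular graph each original vertex $v_i$ receives exactly $p$ new pendent neighbours, so $\deg_{G^*}(v_i) = \deg_G(v_i) + p = r + p$ for every $i$, while each newly added vertex is pendent, hence of degree $1$ in $G^*$. Also, because $G$ is $r$-regular on $n$ vertices, $|E(G)| = \tfrac{nr}{2}$, and the construction adds $\sum_{i=1}^{n} p_i = np$ thorn edges, so $m^* = \tfrac{nr}{2} + np$.

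Next I would split $E(G^*) = E_1 \cup E_2$, where $E_1$ consists of the $\tfrac{nr}{2}$ edges of $G$ and $E_2$ of the $np$ thorn edges; these two sets are disjoint. For an edge $uv \in E_1$ both endpoints lie in $V(G)$, so $\sqrt{\deg_{G^*}(u)^2 + \deg_{G^*}(v)^2} = \sqrt{(r+p)^2 + (r+p)^2} = \sqrt{2}\,(r+p)$, and summing over $E_1$ gives $\tfrac{nr}{2}\cdot\sqrt{2}\,(r+p) = \tfrac{\sqrt{2}\,nr}{2}(r+p)$. For an edge $uv \in E_2$ one endpoint is some $v_i$ of degree $r+p$ and the other is a thorn of degree $1$, so its contribution is $\sqrt{(r+p)^2 + 1}$, and summing over the $np$ such edges gives $np\sqrt{(r+p)^2+1}$. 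Adding the two totals yields the asserted formula.

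The computation is routine; the only point that needs care — and the place I would be most careful — is the bookkeeping of degrees and edge counts: making sure the original edges of $G$ see the boosted degree $r+p$ at both ends rather than $r$, that the thorn edges are counted with multiplicity $np$ rather than $n$ or $p$, and that no edge is double-counted across $E_1$ and $E_2$. No convexity or optimization argument is needed, since the degree sequence of $G^*$ is completely determined by $n$, $r$, and $p$.
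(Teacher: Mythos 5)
Your proposal is correct and follows essentially the same route as the source: the formula is just the general thorny-graph expression for $SO(G^*)$ (Theorem~\ref{thm1Gut2025man}) specialized to $d(v_i)=r$ and $p_i=p$, which is exactly your partition of $E(G^*)$ into the $\tfrac{nr}{2}$ original edges, each contributing $\sqrt{2}\,(r+p)$, and the $np$ pendent edges, each contributing $\sqrt{(r+p)^2+1}$. The degree and edge-count bookkeeping you flag is handled correctly, so nothing further is needed.
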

\begin{corollary}[\cite{Gut2025man}]
If $p_{i}+d\left(v_{i}\right)=D$ holds for all $i=1,2, \ldots, n$, then
$$
S O\left(G^{*}\right)=\sqrt{2} D m+\sqrt{D^{2}+1}\left(n^{*}-n\right)
$$
where $n$ and $m$ are the numbers of the vertices and edges of $G$, whereas $n^{*}$ is the number of the vertices of $G^{*}$.
\end{corollary}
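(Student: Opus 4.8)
The plan is to compute $SO(G^{*})$ directly from the definition
$SO(G^{*})=\sum_{uv\in E(G^{*})}\sqrt{d_{G^{*}}(u)^{2}+d_{G^{*}}(v)^{2}}$
by splitting $E(G^{*})$ into the edges inherited from $G$ and the newly attached thorn edges. First I would record the effect of the hypothesis: since $G^{*}$ is obtained from $G$ by attaching $p_{i}$ pendent vertices to $v_{i}$, one has $d_{G^{*}}(v_{i})=d_{G}(v_{i})+p_{i}$ for every original vertex $v_{i}$, so the condition $p_{i}+d(v_{i})=D$ says precisely that each original vertex has degree $D$ in $G^{*}$; meanwhile every attached pendent vertex has degree $1$ in $G^{*}$.

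Next I would partition $E(G^{*})=E_{0}\cup E_{1}$, where $E_{0}$ consists of the $m$ edges of $G$ (both endpoints original) and $E_{1}$ consists of the thorn edges (one endpoint an original vertex, the other a pendent vertex). For $uv\in E_{0}$ we have $d_{G^{*}}(u)=d_{G^{*}}(v)=D$, so each such edge contributes $\sqrt{D^{2}+D^{2}}=\sqrt{2}\,D$, giving a total of $\sqrt{2}\,D\,m$. For an edge in $E_{1}$ joining some $v_{i}$ to a thorn, the endpoint degrees are $D$ and $1$, so each contributes $\sqrt{D^{2}+1}$; since the number of thorn edges equals $\sum_{i=1}^{n}p_{i}=n^{*}-n$ by the definition of the thorny graph, these edges contribute $\sqrt{D^{2}+1}\,(n^{*}-n)$ in total.

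Adding the two contributions yields $SO(G^{*})=\sqrt{2}\,D\,m+\sqrt{D^{2}+1}\,(n^{*}-n)$, which is the claimed identity. There is no genuine obstacle here beyond correctly accounting for the edge set; the only points deserving care are the use of $n^{*}-n=\sum_{i=1}^{n}p_{i}=m^{*}-m$ to identify the number of thorn edges, and the observation that the hypothesis trivializes the degree of every original vertex so that no case analysis on the structure of $G$ is needed. As a sanity check one can specialize to a regular $G$ of degree $r$ with all $p_{i}=p$: then $D=r+p$, $m=nr/2$ and $n^{*}-n=np$, and the formula reduces to $\frac{\sqrt{2}\,nr}{2}(r+p)+np\sqrt{(r+p)^{2}+1}$, recovering the preceding corollary.
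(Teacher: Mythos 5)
Your proof is correct and follows essentially the same route as the paper: the corollary is an immediate specialization of Theorem~\ref{thm1Gut2025man}, whose two sums are exactly your $E_{0}$ (original edges, each contributing $\sqrt{D^{2}+D^{2}}=\sqrt{2}\,D$) and $E_{1}$ (thorn edges, each contributing $\sqrt{D^{2}+1}$, with $\sum_{i}p_{i}=n^{*}-n$ of them). You simply rederive that edge partition from the definition instead of substituting $d(v_{i})+p_{i}=D$ into the already-stated formula; the sanity check against the regular case is a nice touch but not needed.
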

\begin{theorem}[\cite{Extremalvertexdegree}]
Let $T \in P T_{n, p}$ and the function $f(x)$ be strictly convex, where $2 \leq p \leq n-1$. Then

$$
H_{f}(T) \geq[n-(r-1)(n-p)-2] f(r+1)+[(r-1)(n-p)-p+2] f(r)+p f(1) .
$$
\end{theorem}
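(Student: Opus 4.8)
The plan is to reduce the statement to a classical smoothing (majorization) argument on degree multisets, exploiting that $H_f(T)=\sum_{u\in V(T)}f(\deg_T(u))$ depends only on the degree sequence of $T$. Since a tree in $PT_{n,p}$ has exactly $p$ pendant vertices (vertices of degree $1$), its degree sequence consists of $p$ ones together with $n-p$ integers $d_1,\dots,d_{n-p}$, each at least $2$, whose sum is $\sum_i d_i=2(n-1)-p$ by the handshake identity. Writing $m=n-p$ and $S=2(n-1)-p$, one has $S/m=2+\tfrac{p-2}{n-p}\ge 2$ because $p\ge 2$ for every tree with $n\ge 3$; I would set $r=\lfloor S/m\rfloor$, so the ``balanced'' internal sequence has $a:=S-rm=2n-2-p-r(n-p)$ entries equal to $r+1$ and $b:=m-a=(r-1)(n-p)-p+2$ entries equal to $r$, with $r\ge 2$. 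A direct check shows $a=n-(r-1)(n-p)-2$ and $b=(r-1)(n-p)-p+2$ are exactly the coefficients of $f(r+1)$ and $f(r)$ in the claimed inequality, that $a+b=n-p$, $a(r+1)+br=S$, and $0\le a<m$; hence the right-hand side of the theorem is precisely $p\,f(1)+a\,f(r+1)+b\,f(r)$, the value of $H_f$ at this balanced degree sequence.

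First I would record the decomposition $H_f(T)=p\,f(1)+\sum_{i=1}^{m}f(d_i)$, so it suffices to prove $\sum_{i=1}^{m}f(d_i)\ge a\,f(r+1)+b\,f(r)$ over all integer tuples $(d_1,\dots,d_m)$ with $d_i\ge 2$ and $\sum_i d_i=S$. Then I would run the standard smoothing step: if some admissible tuple has two coordinates with $d_i\ge d_j+2$, replace $(d_i,d_j)$ by $(d_i-1,\,d_j+1)$; this preserves the sum and keeps all entries $\ge 2$ (we only decrement a coordinate that is $\ge 4$), and by strict convexity of $f$ the map $x\mapsto f(x)-f(x-1)$ is strictly increasing on the integers, so $f(d_i)+f(d_j)>f(d_i-1)+f(d_j+1)$; thus $\sum_i f(d_i)$ strictly decreases. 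Consequently any minimizer has all coordinates within $1$ of each other, and the constraints (length $m$, sum $S$) force exactly $a$ coordinates equal to $r+1$ and $b$ equal to $r$. Adding back $p\,f(1)$ yields the theorem. Since $H_f$ depends only on the degree sequence, equality holds precisely when $T$ realizes the balanced degree sequence $(r+1,\dots,r+1,r,\dots,r,1,\dots,1)$, which is always realizable by a tree as its entries are positive integers summing to $2(n-1)$.

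The only delicate points are bookkeeping, not conceptual: verifying the arithmetic that identifies the stated coefficients $n-(r-1)(n-p)-2$ and $(r-1)(n-p)-p+2$ with $S-rm$ and $m-(S-rm)$, checking $0\le a<m$ so that $r=\lfloor S/m\rfloor$ and the extremal sequence is well-defined, and confirming that the smoothing move never forces an internal degree below $2$ (it does not, since it decrements only coordinates $\ge 4$), so the feasible set is closed under the move and the argument stays within $PT_{n,p}$-admissible sequences. Everything else is the textbook principle that a strictly convex sum under a fixed linear constraint is minimized by equalizing the variables, invoked exactly through the strict-convexity hypothesis on $f$.
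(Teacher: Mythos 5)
The paper states this theorem only as an imported result from \cite{Extremalvertexdegree} and supplies no proof of its own, so there is no in-paper argument to compare against; your proposal therefore has to stand on its own, and it does. Your reduction is the standard one for vertex-degree function indices: $H_f$ depends only on the degree sequence, a tree in $PT_{n,p}$ contributes $p$ ones and $n-p$ internal degrees $d_i\ge 2$ summing to $S=2(n-1)-p$, and a strictly convex separable sum under a fixed linear constraint on integers is minimized by the balanced tuple. The bookkeeping checks out: with $m=n-p$ one has $\frac{n-2}{n-p}+1=\frac{S}{m}$, so your $r=\lfloor S/m\rfloor$ agrees with the paper's $r=\lfloor\frac{n-2}{n-p}\rfloor+1$, and $a=S-rm$ and $b=m-a$ reproduce the stated multiplicities $n-(r-1)(n-p)-2$ and $(r-1)(n-p)-p+2$. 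The smoothing move $(d_i,d_j)\mapsto(d_i-1,d_j+1)$ for $d_i\ge d_j+2$ stays inside the feasible set (since $d_j\ge 2$ forces $d_i\ge 4$), strictly decreases the sum by strict convexity, and terminates at the balanced sequence; since $r\ge 2$ the extremal sequence introduces no new pendant vertices and is realizable by a tree (any $n$ positive integers summing to $2(n-1)$ are), which also gives the equality characterization recorded after the theorem. This is almost certainly the same convexity/majorization argument used in the cited source, so I would classify your proof as correct and essentially the canonical route rather than a genuinely different one.
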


Equality occurs only if the degree sequence of $T$ is $(\underbrace{r+1, \cdots, r+1}_{n-(r-1)(n-p)-2}, \underbrace{r, \cdots, r}_{(r-1)(n-p)-p+2}, \underbrace{1, \cdots, 1}_{p})$, where $r=\left\lfloor\frac{n-2}{n-p}\right\rfloor+1$.\\
\begin{theorem}[\cite{Extremalvertexdegree}]
 Let $T \in PT_{n, p}$ and $f(x)$ be strictly convex, where $2 \leq p \leq n-1$. Then
$$
H_{f}(T) \leq p f(1)+(n-p-1) f(2)+f(p).
$$
\end{theorem}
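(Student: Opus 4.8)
The plan is to reduce the inequality to a purely combinatorial optimization over degree multisets, exploiting that the only constraints a tree $T\in PT_{n,p}$ imposes are: exactly $p$ vertices of degree $1$, the remaining $n-p$ vertices of degree at least $2$, and $\sum_{u\in V(T)}\deg_G(u)=2(n-1)$ by the handshake lemma. Writing $H_f(T)=\sum_{u\in V(T)}f(\deg_G(u))$, I would first split this sum over the $p$ pendant vertices, which contribute exactly $p\,f(1)$, and over the set $W$ of the $n-p$ non-pendant vertices. Thus it suffices to prove $\sum_{u\in W}f(\deg_G(u))\le (n-p-1)f(2)+f(p)$.

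Next I would record the numerical fact driving everything. Since $\sum_{u\in W}\deg_G(u)=2(n-1)-p$ and $|W|=n-p$, the total ``excess degree'' is $\sum_{u\in W}\bigl(\deg_G(u)-2\bigr)=2(n-1)-p-2(n-p)=p-2$. Putting $y_u:=\deg_G(u)-2\ge 0$ and $g(t):=f(t+2)$, which is again strictly convex, the target inequality becomes $\sum_{u\in W}g(y_u)\le (n-p-1)\,g(0)+g(p-2)$, subject only to $y_u\ge 0$ being integers with $\sum_{u\in W}y_u=p-2$. Equivalently, $\sum_{u\in W}g(y_u)$ is a convex function on the corresponding simplex, so it is maximized at an extreme point; but I would argue this directly.

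The core step is the elementary convexity estimate: for convex $g$ and $0\le a\le b$ one has $g(a)+g(b)\le g(0)+g(a+b)$, obtained by writing $a=\tfrac{a}{a+b}(a+b)+\tfrac{b}{a+b}\cdot 0$ and $b=\tfrac{b}{a+b}(a+b)+\tfrac{a}{a+b}\cdot 0$ and adding the two Jensen bounds (for integer arguments this is the telescoping fact that $g(k{+}1)-g(k)$ is nondecreasing). Applying it repeatedly — each time picking two coordinates $y_i,y_j>0$ and replacing $(y_i,y_j)$ by $(y_i+y_j,0)$, which never decreases $\sum_{u\in W}g(y_u)$ and strictly decreases the number of positive coordinates — I can push all of the excess $p-2$ onto a single coordinate in at most $n-p-1$ steps, arriving at the multiset $(p-2,0,\dots,0)$, i.e.\ degrees $(p,2,\dots,2)$. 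Hence $\sum_{u\in W}g(y_u)\le g(p-2)+(n-p-1)g(0)$, and translating back gives $H_f(T)\le p\,f(1)+(n-p-1)f(2)+f(p)$.

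Finally I would address sharpness: the spider obtained from the star $K_{1,p}$ by subdividing its edges so that exactly $n-p-1$ new degree-$2$ vertices are created lies in $PT_{n,p}$ and has degree sequence $(p,\underbrace{2,\dots,2}_{n-p-1},\underbrace{1,\dots,1}_{p})$, so the bound is attained; and since strict convexity makes each merging step strict whenever both merged values are positive, equality forces precisely this degree sequence. I expect no genuine obstacle here beyond bookkeeping — the two things to get right are the arithmetic identity isolating $p-2$ as the total excess, and the observation that the optimal multiset is automatically integral, so the continuous optimum is realized by an actual degree sequence.
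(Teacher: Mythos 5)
Your argument is correct and complete. Note that the paper does not actually prove this statement: it is quoted verbatim from the cited source \cite{Extremalvertexdegree}, with only the equality case recorded afterward, so there is no in-paper proof to compare against. Your route is the natural one and is essentially a self-contained instance of the majorization machinery the paper sets up in its preliminaries (the relation $\mathscr{D}\triangleleft\mathscr{D}'$ and the chain of sequences differing in two entries by a unit transfer): your merging step, replacing $(y_i,y_j)$ by $(y_i+y_j,0)$ and invoking $g(a)+g(b)\le g(0)+g(a+b)$ for convex $g$, is exactly one link of such a chain, and iterating it shows that $(p,2,\dots,2,1,\dots,1)$ majorizes every degree sequence realizable in $PT_{n,p}$ while $H_f$ is Schur-convex for convex $f$. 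The bookkeeping checks out: the excess $\sum_{u\in W}(\deg u-2)=p-2$ is right, the upper bound needs only the numerical constraints (so no realizability issue arises there), the subdivided star witnesses sharpness, and strict convexity forces the stated extremal degree sequence. The one cosmetic caveat is that your Jensen derivation of $g(a)+g(b)\le g(0)+g(a+b)$ presumes $a+b>0$ to form the weights, which is harmless since you only merge coordinates that are both positive.
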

Equality occurs only if the degree sequence of $T$ is $(p, \underbrace{2, \cdots, 2}, \underbrace{1, \cdots, 1})$.
\begin{theorem}[\cite{Extremalvertexdegree}]
Let $T \in S T_{n, s}$ and $f(x)$ be strictly convex, where $3 \leq s \leq n-2$. Then
\[
H_{f}(T) \geq \begin{cases}\frac{s-1}{2} f(3)+(n-s-1) f(2)+\frac{s+3}{2} f(1) & \text { ifs is odd, } \\ f(4)+\frac{s-1}{2} f(3)+(n-s-1) f(2)+\frac{s+4}{2} f(1) & \text { ifs is even. }\end{cases}
\]

The equality occurs if and only if the degree sequence of $T$ is $(\underbrace{3, \cdots, 3}, \underbrace{2, \cdots, 2}, \underbrace{1, \cdots, 1})$ for odd $s$ and $(4, \underbrace{3, \cdots, 3}, 2, \cdots, ,\underbrace{1, \cdots, 1)} \text { for even } \mathrm{s}.$,
\end{theorem}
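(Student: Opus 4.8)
The plan is to reduce the inequality to an integer optimization over the degree sequence of $T$ and then exploit the monotonicity of the secant slopes of a strictly convex $f$, exactly in the spirit of the proofs for $PT_{n,p}$ given above. Writing $n_i$ for the number of vertices of $T$ of degree $i$, we have $H_f(T)=\sum_{i\ge 1}n_i f(i)$, and three linear relations pin down the relevant data: the count $\sum_i n_i=n$, the handshake identity $\sum_i i\,n_i=2(n-1)$, and the segment count. For the last I would contract every maximal path of degree-$2$ vertices to obtain the squeeze $S(T)$, a tree whose vertex set is precisely the $p=n_1$ leaves together with the $b=\sum_{i\ge 3}n_i$ branching vertices and whose edges are the segments of $T$; hence $s=p+b-1$. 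Combining the three relations forces $n_2=n-s-1$, $\sum_{i\ge 3}(i-1)n_i=s-1$, and $n_1=s+1-\sum_{i\ge 3}n_i$, so that $H_f(T)=(s+1)f(1)+(n-s-1)f(2)+\sum_{i\ge 3}n_i\bigl(f(i)-f(1)\bigr)$.

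Next I would bound the tail sum. By strict convexity the secant slope $\tfrac{f(i)-f(1)}{i-1}$ is strictly increasing in $i$, so $f(i)-f(1)\ge \tfrac{i-1}{2}\bigl(f(3)-f(1)\bigr)$ for all $i\ge 3$, with equality only at $i=3$. Summing against $n_i$ and using $\sum_{i\ge 3}(i-1)n_i=s-1$ gives $\sum_{i\ge 3}n_i\bigl(f(i)-f(1)\bigr)\ge \tfrac{s-1}{2}\bigl(f(3)-f(1)\bigr)$, and substituting back yields exactly the claimed odd-$s$ bound; when $s$ is odd this is attained by taking every branching vertex of degree $3$ (so $n_3=(s-1)/2$, $n_1=(s+3)/2$), a degree sequence realized by some tree in $ST_{n,s}$ by first building the corresponding reduced tree on $p+b$ vertices and then subdividing its edges to distribute the $n-s-1$ degree-$2$ vertices.

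The hard part will be the even case, since when $s-1$ is odd the relation $\sum_{i\ge 3}(i-1)n_i=s-1$ cannot be met with every branching vertex of degree $3$, so the secant bound is not tight and must be sharpened by one unit of excess. Here I would set $E_i:=\bigl(f(i)-f(1)\bigr)-\tfrac{i-1}{2}\bigl(f(3)-f(1)\bigr)\ge 0$, observe $E_3=0$ and, again from convexity (using $f(i+1)-f(i)\ge f(3)-f(2)\ge \tfrac12(f(3)-f(1))$ for $i\ge 2$), that $E_i$ is nondecreasing in $i\ge 2$ and strictly increasing once $i\ge 4$; then $\sum_{i\ge 3}n_i\bigl(f(i)-f(1)\bigr)=\tfrac{s-1}{2}\bigl(f(3)-f(1)\bigr)+\sum_{i\ge 3}n_i E_i$, and parity of $s-1$ forces at least one even $i\ge 4$ with $n_i\ge 1$, whence $\sum_{i\ge 3}n_i E_i\ge E_4$. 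This produces the even-$s$ bound $H_f(T)\ge f(4)+\tfrac{s-4}{2}f(3)+(n-s-1)f(2)+\tfrac{s+4}{2}f(1)$ (so the $f(3)$-coefficient in the statement should read $\tfrac{s-4}{2}$), with equality iff exactly one branching vertex has degree $4$ and the remaining $s/2-2$ have degree $3$.

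Finally, for the equality characterization in both parities I would trace the only two sources of slack — the secant inequalities (forcing all branching degrees to be $3$, together with a single $4$ when $s$ is even) and nothing else, since $n_2$ is already determined — and confirm sufficiency by the explicit squeeze-then-subdivide construction. I expect the argument to be short once the identity $s=p+b-1$ and the monotonicity of $E_i$ are in place; the only genuinely delicate point is the parity bookkeeping that generates the extra $f(4)$ term in the even case.
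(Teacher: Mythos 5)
The paper does not actually prove this theorem --- it is imported verbatim from \cite{Extremalvertexdegree} --- so there is no internal proof to compare against; I can only assess your argument on its own terms, and it is essentially sound. Your three linear relations are correct: writing $n_i$ for the number of degree-$i$ vertices, the squeeze identity $s=n_1+\sum_{i\ge 3}n_i-1$ together with $\sum_i n_i=n$ and $\sum_i i\,n_i=2(n-1)$ does give $n_2=n-s-1$ and $\sum_{i\ge 3}(i-1)n_i=s-1$, the secant-slope inequality $f(i)-f(1)\ge\tfrac{i-1}{2}\bigl(f(3)-f(1)\bigr)$ (equality only at $i=3$) yields the odd-$s$ bound with the correct equality case, and your parity argument in the even case (an odd target $s-1$ forces some even branching degree $\ge 4$, contributing at least $E_4$) is exactly the right sharpening. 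The monotonicity of $E_i$ follows as you say from $f(i+1)-f(i)\ge f(3)-f(2)>\tfrac12\bigl(f(3)-f(1)\bigr)$, and the realizability of the extremal sequences by building a series-reduced tree and subdividing edges is standard. Your correction is also right: the even-$s$ coefficient of $f(3)$ as printed, $\tfrac{s-1}{2}$, is inconsistent (with $n_4=1$ and $n_1=\tfrac{s+4}{2}$ the vertex count would be $n+\tfrac32$), and the constraint $3+2n_3=s-1$ forces $n_3=\tfrac{s-4}{2}$, which is what the source theorem actually asserts; the statement as transcribed here contains a typo.

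The only points I would tighten: (i) justify explicitly that the squeeze of a tree with at least one branching vertex has exactly the leaves and branching vertices of $T$ as its vertex set and the segments as its edges, so that $s=p+b-1$ (this is where the definition of segment is actually used, and it is the one step that is specific to $ST_{n,s}$ rather than generic degree-sequence bookkeeping); and (ii) in the even-case equality analysis, state the chain $\sum_{i\ge4}n_iE_i\ge\bigl(\sum_{i\ge4}n_i\bigr)E_4\ge E_4$ so that equality visibly forces exactly one vertex of degree $\ge 4$, of degree exactly $4$. Neither is a gap, just a place to be explicit.
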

Let $T \in BT_{n, b}$. Then $b \leq \frac{n}{2}-1$ [23]. Since the path is the only tree with no branching vertex, thus, in the following we always assume that $1 \leq b \leq \frac{n}{2}-1$.
\begin{theorem}[\cite{Extremalvertexdegree}]
Let $T \in BT_{n, b}$ and $f(x)$ be strictly convex, where $1 \leq b \leq \frac{n}{2}-1$. Then

$$
H_{f}(T) \geq b f(3)+(n-2 b-2) f(2)+(b+2) f(1)
$$

with the equality holding if and only if the degree sequence of $T$ is $(\underbrace{3, \cdots, 3}_{b}, \underbrace{2, \cdots, 2}_{n-2 b-2}, \underbrace{1, \cdots, 1}_{b+2})$.
\end{theorem}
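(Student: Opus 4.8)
Write $H_f(T)=\sum_{v\in V(T)}f\bigl(d_T(v)\bigr)$ and recall that a branching vertex is one of degree at least $3$, so $T\in BT_{n,b}$ has exactly $b$ of them and $1\le b\le \tfrac n2-1$. The plan is a degree-balancing argument: starting from an arbitrary $T\in BT_{n,b}$ I will repeatedly apply a local edge-rotation that keeps $n$ and $b$ fixed while strictly decreasing $H_f$, until no branching vertex has degree larger than $3$; at that point the degree sequence is uniquely forced and coincides with the one in the statement, which gives both the bound and the equality case.

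\emph{The move.} Suppose some branching vertex $u$ has $d_T(u)=k\ge 4$. Let $T_1,\dots,T_k$ be the components of $T-u$, with $u$ adjacent to $v_j\in T_j$. Delete $uv_1$; the forest splits into $T_1$ and the tree $T'':=T-V(T_1)$ (which contains $u$ with $\deg_{T''}(u)=k-1\ge 3$, hence has at least two leaves, none of them $u$). Pick a leaf $\ell$ of $T''$; since all edges between $V(T_1)$ and the rest passed through $uv_1$, $\ell$ is in fact a leaf of $T$ with $\ell\ne u$ and $\ell\notin V(T_1)$, so $\ell v_1\notin E(T)$. Adding the edge $\ell v_1$ produces a tree $T'$ on $n$ vertices. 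Passing from $T$ to $T'$, only $d(u)$ drops from $k$ to $k-1\ge 3$ and $d(\ell)$ rises from $1$ to $2$ (note $v_1$ lost the edge to $u$ but gained the edge to $\ell$, so its degree is unchanged); thus $u$ remains branching, $\ell$ does not become branching, all other degrees are unchanged, and $T'\in BT_{n,b}$. The effect on the index is
\[
H_f(T')-H_f(T)=\bigl(f(k-1)-f(k)\bigr)+\bigl(f(2)-f(1)\bigr).
\]
Since $f$ is strictly convex, the forward differences $g(t):=f(t+1)-f(t)$ are strictly increasing, and $k-1\ge 3>1$ gives $g(k-1)>g(1)$, i.e. $f(k)-f(k-1)>f(2)-f(1)$; hence $H_f(T')<H_f(T)$.

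\emph{Termination and conclusion.} Each move strictly decreases $\sum_{v}d(v)^2$ (the change is $(k-1)^2-k^2+2^2-1^2=4-2k<0$), a nonnegative integer, so after finitely many moves we reach $T^\star\in BT_{n,b}$ in which every branching vertex has degree exactly $3$. Writing $s$ and $p$ for its numbers of degree-$2$ and degree-$1$ vertices, the identities $b+s+p=n$ and $3b+2s+p=2(n-1)$ force $s=n-2b-2$ (nonnegative precisely because $b\le\tfrac n2-1$) and $p=b+2$, so the degree sequence of $T^\star$ is $(\underbrace{3,\dots,3}_{b},\underbrace{2,\dots,2}_{n-2b-2},\underbrace{1,\dots,1}_{b+2})$ and $H_f(T^\star)=bf(3)+(n-2b-2)f(2)+(b+2)f(1)$. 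Along the chain of strict decreases $H_f(T)\ge H_f(T^\star)$, which is the claimed inequality. For equality: if all branching vertices of $T$ already have degree $3$, the same degree count shows $T$ has exactly that degree sequence and $H_f(T)$ equals the right-hand side; conversely, if $T$ has a branching vertex of degree $\ge 4$, one move gives $T'$ with $H_f(T)>H_f(T')\ge$ RHS, so equality fails. (Realizability of the extremal sequence is witnessed by the caterpillar obtained from a path $u_0u_1\cdots u_{n-b-1}$ by attaching a pendant vertex to each of $b$ of its internal vertices, which is possible exactly when $b\le n-b-2$, i.e. in the stated range.)

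\emph{Main obstacle.} The delicate point is making the local move robust, above all verifying that it preserves $b$ exactly: reattaching the detached branch to a degree-$2$ vertex would create a new branching vertex and push $b$ up, so it is essential that $\ell$ be chosen to be a \emph{leaf}, and one must confirm such a leaf always exists outside the detached branch. The remaining ingredients — strict convexity giving the strict inequality on forward differences, and $\sum_v d(v)^2$ serving as a termination potential — are routine.
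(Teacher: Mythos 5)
Your argument is correct. Note first that the paper itself gives no proof of this statement: it is imported verbatim from the cited reference \cite{Extremalvertexdegree}, so there is no in-paper argument to compare against. Results of this type are usually obtained by a majorization route (the sequence $(3,\dots,3,2,\dots,2,1,\dots,1)$ is minimal under the majorization order among degree sequences of trees in $BT_{n,b}$, and $H_f$ is Schur-convex for convex $f$), whereas you give a direct local edge-rotation argument; both are legitimate, and yours has the advantage of being self-contained and of producing the equality characterization in the same stroke. Checking the details: the reattachment target $\ell$ is correctly forced to be a leaf of $T''$ distinct from $u$ (so $b$ is preserved and no new branching vertex is created), the strict inequality $f(k)-f(k-1)>f(2)-f(1)$ for $k\ge 4$ is exactly what strict convexity of the forward differences gives, $\sum_v d(v)^2$ is a valid integer-valued termination potential, and the two degree-sum identities pin down the terminal sequence, with $n-2b-2\ge 0$ exactly on the stated range of $b$. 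The equality case is also handled correctly in both directions. One cosmetic remark: a tree containing a vertex of degree $k-1\ge 3$ in fact has at least three leaves, so your weaker claim of ``at least two, none of them $u$'' is safely true; and the realizability remark via the caterpillar, while not needed for the inequality itself, properly shows the equality case is non-vacuous.
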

\begin{lemma}[\cite{Extremalvertexdegree}]
Let $T \in D T_{n, k}$ with the maximum degree $\Delta$. Then $\Delta \leq\left\lfloor\frac{n-2}{k}\right\rfloor+1$.\\
Theorem 6.1. Let $T \in D T_{n, k}$ and $f(x)$ be strictly convex, where $1 \leq k \leq \frac{n}{2}-1$. Then

$$
H_{f}(T) \geq k f(3)+(n-2 k-2) f(2)+(k+2) f(1)
$$

with the equality holding only if the degree sequence of $T$ is $(3, \cdots, 3,2, \cdots, 2,1, \cdots, 1)$.

$$
(\underbrace{3, \cdots, 3}_{k}, \underbrace{2, \cdots, 2}_{n-2 k-2}, \underbrace{1, \cdots, 1}_{k+2}) .
$$
\end{lemma}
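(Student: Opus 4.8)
The plan is to regard $H_f(T)=\sum_{v\in V(T)}f(d_T(v))$ as a function of the degree sequence of $T$ alone, and to use that $\sum_{v\in V(T)}d_T(v)=2(n-1)$ for every $n$-vertex tree. The governing fact is the strict-convexity estimate: if $a\ge b+2$ then $f(a-1)+f(b+1)<f(a)+f(b)$, since strict convexity forces the slope $f(a)-f(a-1)$ to exceed $f(b+1)-f(b)$ whenever $a-1>b$. Equivalently, by the majorization-refinement Proposition quoted earlier, $\mathscr{D}\triangleleft\mathscr{D}'$ with $\mathscr{D}\neq\mathscr{D}'$ implies $H_f(\mathscr{D})<H_f(\mathscr{D}')$, so among the degree sequences realised by trees in $DT_{n,k}$ the minimiser of $H_f$ is the one that is smallest in the majorization order. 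Thus the goal is to show that this minimiser is exactly $(\,\underbrace{3,\cdots,3}_{k},\underbrace{2,\cdots,2}_{n-2k-2},\underbrace{1,\cdots,1}_{k+2}\,)$.

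I would organise the argument in two moves. First, fix the maximum degree $\Delta=d$; by the preceding Lemma $d\le\lfloor(n-2)/k\rfloor+1$, and since $1\le k\le n/2-1$ a path is excluded, so $d\ge 3$. Among $n$-vertex trees in $DT_{n,k}$ with maximum degree exactly $d$, apply local edge-sliding moves — delete an edge $ux$ at a high-degree vertex $u$ and re-attach the detached branch at a vertex $w$ with $d_T(w)\le 2$, which changes only $d_T(u)\mapsto d_T(u)-1$ and $d_T(w)\mapsto d_T(w)+1$ and hence strictly decreases $H_f$ by the estimate above — to flatten the part of the degree sequence below $d$. The handshake identity then pins the multiplicities: with $k$ vertices of degree $d$, $a$ of degree $2$ and $b$ of degree $1$, solving $kd+2a+b=2(n-1)$ and $k+a+b=n$ gives $a=n-2-k(d-1)\ge 0$ and $b=2+k(d-2)$, and the corresponding value of $H_f$ is $kf(d)+a f(2)+b f(1)$. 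Second, minimise over $d$: the difference between the values at $d$ and $d+1$ is proportional to $\big(f(d{+}1)-f(d)\big)-\big(f(2)-f(1)\big)>0$ by strict convexity, so the minimum is attained at $d=3$, giving precisely $kf(3)+(n-2k-2)f(2)+(k+2)f(1)$; since every reduction step was strict, this is the claimed bound and equality can occur only for the stated degree sequence.

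The step I expect to be the real obstacle is the first move: ensuring the edge-sliding operation can always be carried out so that $T$ stays in $DT_{n,k}$ — i.e. without corrupting the set of vertices realising the maximum degree — and, dually, verifying that the balanced sequence produced is actually realised by some tree in $DT_{n,k}$ (a caterpillar obtained by stringing $k$ copies of a degree-$d$ star along a path and then subdividing to absorb the $n-2k-2$ degree-$2$ vertices should work). The convexity computations in both moves are routine; all the genuine care goes into this class-preservation and realizability bookkeeping, which runs parallel to the argument already used for $BT_{n,b}$.
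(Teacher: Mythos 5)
The paper itself offers no proof to compare against: both the lemma and ``Theorem 6.1'' are quoted verbatim from the cited source \cite{Extremalvertexdegree}. Judged on its own, your argument is correct, and its two-stage structure is the right one. A few remarks. Your framing sentence, that ``the minimiser of $H_f$ is the one that is smallest in the majorization order,'' presupposes that the degree sequences realised by $DT_{n,k}$ have a minimum element under $\triangleleft$; you never use this global claim and should not assert it, because what actually carries the proof is exactly your two steps: (i) for fixed maximum degree $d$ with its multiplicity $k$ frozen, Karamata's inequality shows the balanced completion $(d^{k},2^{a},1^{b})$ with $a=n-2-k(d-1)$, $b=2+k(d-2)$ strictly minimises $\sum f$ among integer completions of the same length and sum; (ii) $V(d)=kf(d)+af(2)+bf(1)$ satisfies $V(d+1)-V(d)=k\bigl[(f(d+1)-f(d))-(f(2)-f(1))\bigr]>0$ by strict convexity, so $d=3$ is optimal. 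Both computations check out. The obstacle you flag --- keeping the edge-sliding moves inside $DT_{n,k}$ --- disappears once you observe that $H_f$ depends only on the multiset of degrees: stage (i) is a purely arithmetic statement about integer sequences of prescribed length, sum $2(n-1)-kd$ and entries in $[1,d-1]$, so no intermediate trees need to be realised; only the terminal sequence $(3^{k},2^{n-2k-2},1^{k+2})$ must come from a tree with exactly $k$ vertices of maximum degree, and your caterpillar does this precisely when $n-2k-2\ge 0$, i.e.\ $k\le \frac{n}{2}-1$. Finally, note that the lemma $\Delta\le\lfloor\frac{n-2}{k}\rfloor+1$ is nothing but your own constraint $a\ge 0$ read backwards (from $k\Delta+(n-k)\le\sum_v d(v)=2(n-1)$), so it is subsumed rather than left unproved, and the equality case follows since both stages are strict unless the degree sequence already equals the balanced one with $d=3$.
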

Let $\mathscr{F}_{2 m}$ be a collection of $2 m$-vertex trees obtained from $S_{m+1}$ by adding a pendent edge to its $m-1$ pendent vertices. 
\begin{theorem}[\cite{Extremalvertexdegree}]
Let $T \in M T_{2 m}$ and $f(x)$ be strictly convex, where $m \geq 2$. Then

$$
H_{f}(T) \geq 2(m-1) f(2)+2 f(1)
$$

with equality only if $T \cong P_{2 m}$.
\end{theorem}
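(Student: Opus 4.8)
The plan is to recognize that $H_f(T)=\sum_{v\in V(T)}f(\deg_T(v))$ and to reduce the statement to a majorization inequality. First I would identify the right-hand side: the path $P_{2m}$ has degree sequence $\mathscr D(P_{2m})=(\underbrace{2,\dots,2}_{2m-2},1,1)$, so $H_f(P_{2m})=2(m-1)f(2)+2f(1)$; moreover $P_{2m}\in MT_{2m}$, since a path on an even number of vertices has a perfect matching. Hence it suffices to show $H_f(T)\ge H_f(P_{2m})$ for every $T\in MT_{2m}$, with equality forcing $\mathscr D(T)=\mathscr D(P_{2m})$.

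The heart of the argument is the claim that $\mathscr D(P_n)\triangleleft\mathscr D(T)$ for \emph{every} tree $T$ on $n=2m$ vertices. Write $\mathscr D(T)=(d_1\ge\cdots\ge d_n)$; by Euler's identity $\sum_{i=1}^n d_i=2(n-1)$, and $d_n=1$ because every tree on $\ge 3$ vertices has at least two leaves. Then $\sum_{i=1}^{n-1}d_i=2n-3=\sum_{i=1}^{n-1}\mathscr D(P_n)_i$ and the full sums coincide, so only the inequalities $\sum_{i=1}^k d_i\ge 2k$ for $1\le k\le n-2$ remain. I would split on the value of $d_k$: if $d_k\ge 2$ the bound is immediate; if $d_k=1$ then, by monotonicity of the sorted sequence, every vertex of degree $\ge 2$ lies among the first $k$ entries, so with $L$ the number of leaves one computes $\sum_{i=1}^k d_i=\big((2n-2)-L\big)+\big(k-(n-L)\big)=n-2+k$, which is $\ge 2k$ precisely because $k\le n-2$. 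This establishes the majorization.

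To conclude, I would invoke the unit-transfer decomposition of majorization (the Proposition of \cite{ZhangXM2013Zhang}): since $\mathscr D(P_n)\triangleleft\mathscr D(T)$ there is a chain of graphic sequences from $\mathscr D(P_n)$ to $\mathscr D(T)$ in which consecutive terms differ by a single transfer $d_j\mapsto d_j+1$, $d_k\mapsto d_k-1$ with $j<k$. For convex $f$ each such step does not decrease $H_f$, because $f(d_j+1)-f(d_j)\ge f(d_k)-f(d_k-1)$ whenever $d_j\ge d_k$ (the forward difference of a convex function is non-decreasing); summing along the chain gives $H_f(P_n)\le H_f(T)$. When $f$ is \emph{strictly} convex this forward difference is strictly increasing, so every transfer with $d_j\ge d_k$ strictly increases $H_f$; hence $H_f(T)=H_f(P_{2m})$ forces the chain to be empty, i.e. $\mathscr D(T)=\mathscr D(P_{2m})$, which means $T$ has exactly two vertices of degree $1$ and all others of degree $2$ — for a connected acyclic graph this forces $T\cong P_{2m}$.

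The main obstacle is the partial-sum estimate in the case $d_k=1$: one must count leaves against internal vertices precisely enough to land exactly on $n-2+k$, and this is the only place where the restriction $k\le n-2$ (rather than $k\le n-1$) is used. A secondary subtlety is the equality analysis — one should check that when $\mathscr D(T)\ne\mathscr D(P_{2m})$ the transfer chain is nonempty and that at each step the two modified entries satisfy $d_j\ge d_k$ (automatic from the sorted order together with $j<k$), so that strict convexity produces a strict gain. Note finally that the perfect-matching hypothesis is used only to guarantee that the extremal tree $P_{2m}$ actually lies in $MT_{2m}$; the inequality itself is valid for all $2m$-vertex trees.
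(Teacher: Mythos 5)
Your argument is correct and complete. Note first that the paper does not actually prove this statement: it is quoted verbatim from the cited reference \cite{Extremalvertexdegree}, so there is no in-paper proof to compare against. Your route is nevertheless entirely consonant with the machinery the paper assembles in its preliminaries (the majorization order $\triangleleft$ and the unit-transfer Proposition from \cite{ZhangXM2013Zhang}), and all the individual steps check out: the identification $H_f(P_{2m})=2(m-1)f(2)+2f(1)$; the partial-sum verification that $\mathscr D(P_n)\triangleleft\mathscr D(T)$ for every $n$-vertex tree, where your leaf-counting computation $\sum_{i=1}^k d_i=n-2+k\ge 2k$ for $k\le n-2$ in the case $d_k=1$ is exactly right; the monotonicity of $H_f$ along a transfer chain via the forward-difference inequality $f(d_j+1)-f(d_j)\ge f(d_k)-f(d_k-1)$ for $d_j\ge d_k$, with strictness under strict convexity; and the equality analysis reducing to the fact that a tree with degree sequence $(2,\dots,2,1,1)$ is a path. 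Your observation that the perfect-matching hypothesis enters only to certify $P_{2m}\in MT_{2m}$ is a genuine strengthening: you prove the lower bound over \emph{all} trees on $2m$ vertices, which is more than the stated theorem requires. The only cosmetic caveat is that the intermediate sequences produced by the transfer Proposition are merely graphic, not necessarily tree degree sequences, but since your argument uses them only numerically (through $H_f$ evaluated on sorted integer sequences) this is harmless.
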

\begin{theorem}[\cite{Extremalvertexdegree}]
Let $T \in M T_{2 m}$ and $f(x)$ be strictly convex, where $m \geq 2$. Then

$$
H_{f}(T) \leq f(m)+(m-1) f(2)+m f(1)
$$

with equality only if $T \cong \mathscr{F}_{2 m}$.
\end{theorem}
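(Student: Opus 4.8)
Write $n=2m$ and recall that $MT_{2m}$ is the class of $2m$-vertex trees possessing a perfect matching, so for $T\in MT_{2m}$ we have $\sum_{v\in V(T)}\deg_T(v)=2(n-1)=4m-2$ and $H_f(T)=\sum_{v\in V(T)}f(\deg_T(v))$. The target tree $\mathscr F_{2m}$ is the spider with one leg of length $1$ and $m-1$ legs of length $2$; its degree sequence is $\mathscr D^*=(m,\underbrace{2,\dots,2}_{m-1},\underbrace{1,\dots,1}_{m})$, so $H_f(\mathscr F_{2m})=f(m)+(m-1)f(2)+mf(1)$. The plan is to prove that $\mathscr D(T)$ is majorized by $\mathscr D^*$ for every $T\in MT_{2m}$, then transfer the inequality to $H_f$ using the strict convexity of $f$ together with the unit-transfer decomposition of majorization quoted from \cite{ZhangXM2013Zhang}, and finally to analyze the equality case.

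The first and most essential step is the majorization $\mathscr D(T)\trianglelefteq\mathscr D^*$. I would start from two structural facts about a tree $T$ carrying a (necessarily unique) perfect matching $M$: every vertex has at most one pendant neighbour (two pendant neighbours of $v$ would both be forced into $M$ via $v$); hence the map sending a leaf to its non-leaf neighbour is injective, and the number $p$ of leaves satisfies $p\le n-p$, i.e. $p\le m$. Comparing with $\sum_v(\deg_T(v)-2)=-2$ gives $S:=\sum_{v:\deg_T(v)\ge 3}(\deg_T(v)-2)=p-2\le m-2$. Now let $d_1\ge\cdots\ge d_{2m}$ be the degrees of $T$. For $1\le k\le m$, monotonicity gives $\sum_{i=1}^k d_i-2k\le S\le m-2$, so $\sum_{i=1}^k d_i\le m+2(k-1)=\sum_{i=1}^k d^*_i$; for $m<k<2m$, using $d_i\ge 1$ we get $\sum_{i>k}d_i\ge 2m-k$, hence $\sum_{i=1}^k d_i=(4m-2)-\sum_{i>k}d_i\le \sum_{i=1}^k d^*_i$; and both sequences sum to $4m-2$. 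Thus $\mathscr D(T)\trianglelefteq\mathscr D^*$.

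If $\mathscr D(T)\neq\mathscr D^*$, then by the Proposition of \cite{ZhangXM2013Zhang} there is a chain $\mathscr D(T)=\mathscr D_0\triangleleft\cdots\triangleleft\mathscr D_\ell=\mathscr D^*$ in which consecutive terms differ by a single unit transfer, raising one entry $a$ by $1$ and lowering a smaller (or equal) entry $b\le a$ by $1$. For strictly convex $f$ one has $f(a+1)+f(b-1)>f(a)+f(b)$ whenever $a\ge b$ (Karamata-type transfer inequality), so each step strictly increases $\sum f(\cdot)$; summing along the chain yields $H_f(T)<H_f(\mathscr F_{2m})$. In all cases $H_f(T)\le f(m)+(m-1)f(2)+mf(1)$, and equality forces $\mathscr D(T)=\mathscr D^*$. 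It then remains to show $\mathscr D(T)=\mathscr D^*$ together with $T\in MT_{2m}$ forces $T\cong\mathscr F_{2m}$: letting $v$ be the unique vertex of degree $m$ and $u$ its matched neighbour, the components $C_0\ni u,C_1,\dots,C_{m-1}$ of $T-v$ are matched edge-disjointly by $M\setminus\{uv\}$, so $C_0-u$ and each $C_i$ carry a perfect matching, whence $|C_0|$ is odd $\ge1$ and each $|C_i|$ is even $\ge2$; since $|C_0|+\sum_{i\ge1}|C_i|=2m-1=1+2(m-1)$, equality forces $|C_0|=1$ and $|C_i|=2$, i.e. $u$ is pendant and each $C_i$ is an edge whose $v$-side vertex has degree $2$ and whose other endpoint is pendant, so $T$ is exactly $\mathscr F_{2m}$.

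The main obstacle is the majorization $\mathscr D(T)\trianglelefteq\mathscr D^*$, which hinges entirely on the leaf bound $p\le m$ and hence on the ``at most one pendant neighbour'' property of trees with a perfect matching; once this combinatorial inequality is secured, the rest is routine application of the convexity machinery already present in the paper. A minor point to verify carefully is that the unit-transfer chain can be taken through nonincreasing sequences, so that ``raise $a$, lower $b$'' always satisfies $b\le a$, which is precisely what the strict-convexity increment requires.
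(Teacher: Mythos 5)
The paper does not actually prove this statement: it is quoted from \cite{Extremalvertexdegree} and recorded without argument, so there is no internal proof to compare yours against; I can only judge the proposal on its own merits, and on those it is correct and complete. Your decomposition into (i) the combinatorial leaf bound $p\le m$ for trees with a perfect matching, obtained from the observation that no vertex can have two pendant neighbours, (ii) the resulting majorization $\mathscr{D}(T)\triangleleft(m,2,\dots,2,1,\dots,1)$, (iii) the unit-transfer/Karamata step using strict convexity, and (iv) the component-counting argument for the equality case all check out: the partial-sum estimates $\sum_{i\le k}d_i\le m+2(k-1)$ for $k\le m$ and $\sum_{i\le k}d_i\le 2m-2+k$ for $m<k<2m$ are verified correctly, and each transfer raising an entry $a$ and lowering an entry $b\le a$ strictly increases $\sum f$ for strictly convex $f$. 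Two small points to tidy. First, for $m=2$ the vertex of degree $m$ is not unique (the degree sequence is $(2,2,1,1)$ and $T\cong P_4=\mathscr{F}_4$ trivially), so phrase the equality analysis as ``a vertex of degree $m$''; your component argument then still goes through verbatim. Second, you do not really need the quoted Proposition guaranteeing a chain of \emph{graphic} sequences: Karamata's inequality applies to the degree sequences as mere number sequences, which spares you from worrying whether the intermediate sequences are realizable by trees with perfect matchings (they need not be).
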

\begin{corollary}[\cite{Extremalvertexdegree}]
Let $T \in B T_{n, b}$ be a chemical tree, where $n \geq 14$ and $1 \leq b \leq \frac{n}{2}-1$. Then

$$
L z(T) \geq 9 b(n-4)+4(n-2 b-2)(n-3)+(b+2)(n-2)
$$

with the equality holding if and only if the degree sequence of $T$ is $(\underbrace{3, \cdots, 3}_{b}, \underbrace{2, \cdots, 2}_{n-2 b-2}, \underbrace{1, \cdots, 1}_{b+2})$.
\end{corollary}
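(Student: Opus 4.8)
The plan is to realise the Lanzhou index $Lz$ as a special case of the vertex-degree function index $H_f$ and then to quote the lower bound already established for trees in $BT_{n,b}$. Recall that $Lz(G)=\sum_{u\in V(G)}\bigl(n-1-\deg_G(u)\bigr)\deg_G(u)^{2}$; hence $Lz(T)=H_f(T)$ with
\[
f(x)=(n-1-x)x^{2}=(n-1)x^{2}-x^{3}.
\]
So once we know that $f$ is strictly convex on the degrees that actually occur in $T$, the bound will follow by direct substitution.

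First I would check convexity. We have $f''(x)=2(n-1)-6x$, which is positive for every $x\in[1,4]$ precisely when $2(n-1)>24$, i.e.\ $n\ge 14$. Because $T$ is a chemical tree, every vertex has degree in $\{1,2,3,4\}\subseteq[1,4]$, so on the range that matters $f$ is strictly convex, and this is exactly the hypothesis needed to run the argument behind the $BT_{n,b}$ lower-bound theorem. For $n\le 13$ the cubic term $-x^{3}$ dominates near $x=4$ and convexity is lost on the full degree range; this is the sole reason for the hypothesis $n\ge 14$, and it is the only delicate point of the proof.

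Next I would apply the $BT_{n,b}$ theorem with this $f$: for $T\in BT_{n,b}$ with $1\le b\le\frac{n}{2}-1$ it gives
\[
H_f(T)\ \ge\ b\,f(3)+(n-2b-2)\,f(2)+(b+2)\,f(1),
\]
with equality if and only if the degree sequence of $T$ is $(\underbrace{3,\dots,3}_{b},\underbrace{2,\dots,2}_{n-2b-2},\underbrace{1,\dots,1}_{b+2})$. Substituting $f(1)=n-2$, $f(2)=4(n-3)$, $f(3)=9(n-4)$ turns the right-hand side into $9b(n-4)+4(n-2b-2)(n-3)+(b+2)(n-2)$, which is the claimed bound. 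For the equality statement I would note that the extremal degree sequence has maximum entry $3\le 4$ and that $n-2b-2\ge 0$ since $b\le\frac{n}{2}-1$; hence it is realised by a genuine chemical tree in $BT_{n,b}$, so the inequality remains sharp after restricting to chemical trees, and equality forces precisely that degree sequence.
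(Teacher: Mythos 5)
Your proof is correct and follows exactly the route the paper intends: the corollary is a direct specialization of the preceding theorem bounding $H_f(T)$ for $T\in BT_{n,b}$, obtained by taking $f(x)=(n-1-x)x^{2}$ for the Lanzhou index and verifying strict convexity on the chemical degree range $[1,4]$, which is precisely where the hypothesis $n\ge 14$ enters via $f''(x)=2(n-1)-6x>0$. Your substitutions $f(1)=n-2$, $f(2)=4(n-3)$, $f(3)=9(n-4)$ and your check that the extremal degree sequence is realizable by a chemical tree complete the argument as intended.
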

\begin{corollary}[\cite{Extremalvertexdegree}]
Let $T \in D T_{n, k}$ be a chemical tree, where $n \geq 14$ and $1 \leq k \leq \frac{n}{2}-1$. Then

$$
L z(T) \geq 9 k(n-4)+4(n-2 k-2)(n-3)+(k+2)(n-2)
$$

with the equality holding only if the degree sequence of $T$ is $(\underbrace{3, \cdots, 3}_{k}, \underbrace{2, \cdots, 2}_{n-2 k-2}, \underbrace{1, \cdots, 1}_{k+2})$.
\end{corollary}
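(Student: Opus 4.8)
The plan is to recognise the Lanzhou-type index $Lz$ as a vertex-degree function index $H_f$ for an explicit $f$, and then feed it into the $DT_{n,k}$ bound already proved for strictly convex functions (the inequality $H_f(T)\geq kf(3)+(n-2k-2)f(2)+(k+2)f(1)$ stated just above for $T\in DT_{n,k}$ and $1\le k\le \tfrac n2-1$). Writing $\bar d(v)=n-1-d(v)$, take
\[
f(x)=x^{2}(n-1-x),
\]
so that $Lz(T)=\sum_{v\in V(T)}d(v)^{2}\bigl(n-1-d(v)\bigr)=\sum_{v\in V(T)}f\bigl(d(v)\bigr)=H_f(T)$. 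One computes once and for all $f(1)=n-2$, $f(2)=4(n-3)$ and $f(3)=9(n-4)$, which are exactly the coefficients appearing on the right-hand side of the claimed inequality.

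The one substantive step is verifying the convexity hypothesis. Since $f(x)=(n-1)x^{2}-x^{3}$ we have $f''(x)=2(n-1)-6x$, which is positive precisely for $x<(n-1)/3$. Because $T$ is a \emph{chemical} tree, $\Delta(T)\le 4$, so every vertex degree lies in $\{1,2,3,4\}\subseteq[1,4]$; and $f''(x)\ge 2(n-1)-24=2n-26>0$ for all $x\in[1,4]$ exactly when $n\ge 14$. Hence, under the standing hypotheses, $f$ is strictly convex on an interval containing all degrees that can occur in $T$, which is all that the $DT_{n,k}$ bound requires — it does not ask for convexity on all of $(0,\infty)$, which would be false here since a downward cubic is eventually concave. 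Applying that bound and substituting the three values of $f$ gives
\[
Lz(T)\ \geq\ 9k(n-4)+4(n-2k-2)(n-3)+(k+2)(n-2),
\]
as asserted.

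For the equality case, the $DT_{n,k}$ bound also asserts that equality forces the degree sequence of $T$ to be $(\underbrace{3,\cdots,3}_{k},\underbrace{2,\cdots,2}_{n-2k-2},\underbrace{1,\cdots,1}_{k+2})$; conversely this sequence sums to $3k+2(n-2k-2)+(k+2)=2(n-1)$ and has maximum entry $3\le 4$, so it is the degree sequence of a chemical tree and the restriction to chemical trees leaves the characterisation intact. The only point requiring care is the convexity verification together with the observation that the hypothesis $n\ge 14$ is precisely what keeps $f$ convex up to $x=4$; everything else is the arithmetic of $f(1),f(2),f(3)$ and the degree-sum check, so I do not expect a genuine obstacle here.
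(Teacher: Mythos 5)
Your proposal is correct and follows exactly the route the paper intends: the corollary is the specialization of the stated $DT_{n,k}$ bound $H_f(T)\geq kf(3)+(n-2k-2)f(2)+(k+2)f(1)$ to $f(x)=x^2(n-1-x)$, with $f(1)=n-2$, $f(2)=4(n-3)$, $f(3)=9(n-4)$. Your observation that the chemical-tree restriction ($\Delta\le 4$) together with $n\ge 14$ is precisely what guarantees $f''(x)=2(n-1)-6x>0$ on the range of occurring degrees is the right justification for the hypotheses and matches the role these assumptions play in the cited source.
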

\begin{corollary}[\cite{Extremalvertexdegree}]
Let $T \in M T_{2 m}$ be a chemical tree, where $m \geq 7$. Then

$$
L z(T) \geq 8(m-1)(2 m-3)+4(m-1)
$$

with equality only if $T \cong P_{2 m}$.
\end{corollary}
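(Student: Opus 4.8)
The plan is to recognize the Lanzhou index $Lz$ as a special case of the vertex-degree function index $H_f$ and then quote the extremal theorem for $MT_{2m}$ already established in the excerpt. Recall that for an $n$-vertex graph $Lz(G)=\sum_{v\in V(G)}(n-1-d_G(v))\,d_G(v)^2$; here $n=2m$, so putting
\[
f(x)=(2m-1-x)x^2=(2m-1)x^2-x^3,
\]
we have $Lz(T)=\sum_{v\in V(T)}f(d_T(v))=H_f(T)$ for every $T\in MT_{2m}$. This matches the shape of the preceding chemical-tree corollaries: the entries $9(n-4),4(n-3),(n-2)$ there are exactly $f(3),f(2),f(1)$ with $n$ in place of $2m$.

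First I would verify the convexity of $f$ on the range of degrees that can occur in a chemical tree. Since $T$ is chemical, $\Delta(T)\le 4$, so every degree of $T$ — and of $P_{2m}$ — lies in $\{1,2,3,4\}$. Differentiating, $f''(x)=2(2m-1)-6x=4m-2-6x$, which is decreasing, so on $[1,4]$ it is minimised at $f''(4)=4m-26$; hence $f$ is strictly convex on $[1,4]$ exactly when $m\ge 7$, which is precisely the hypothesis (equivalently $n=2m\ge 14$, matching the $BT_{n,b}$ and $DT_{n,k}$ analogues). In particular the relevant second differences $f(1)-2f(2)+f(3)$ and $f(2)-2f(3)+f(4)$ are positive. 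Then I would apply the extremal theorem for $MT_{2m}$ to get $H_f(T)\ge 2(m-1)f(2)+2f(1)$ with equality only if $T\cong P_{2m}$, and finally evaluate $f(2)=4(2m-1)-8=8m-12=4(2m-3)$ and $f(1)=(2m-1)-1=2(m-1)$, so that
\[
Lz(T)=H_f(T)\ge 2(m-1)\cdot 4(2m-3)+2\cdot 2(m-1)=8(m-1)(2m-3)+4(m-1);
\]
as $P_{2m}$ is itself a chemical tree, the equality characterisation carries over.

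The one genuine subtlety — and the step needing care rather than the routine algebra — is that $f(x)=(2m-1-x)x^2$ is a cubic with negative leading coefficient, so it is \emph{not} convex on all of $[1,\infty)$, and one cannot blindly feed it into a theorem stated for "strictly convex $f$". The corollary really does rely on the chemical hypothesis $\Delta\le 4$ to confine attention to the window $[1,4]$, and on $m\ge 7$ to make that window lie inside $\{f''>0\}$. I would make this rigorous by observing that $H_f$ on chemical trees depends only on the four values $f(1),f(2),f(3),f(4)$, so $f$ may be replaced by any globally strictly convex function agreeing with it at those integer points — such a function exists precisely because the two second differences above are positive — and then the extremal theorem for $MT_{2m}$ applies verbatim, with $P_{2m}$ (being chemical) still realising the bound.
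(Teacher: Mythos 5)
Your derivation is correct and follows the route the paper evidently intends: the corollary is stated as a quoted consequence of the preceding theorem $H_f(T)\ge 2(m-1)f(2)+2f(1)$ for $T\in MT_{2m}$, and your choice $f(x)=x^2(2m-1-x)$ with $f(1)=2(m-1)$ and $f(2)=4(2m-3)$ reproduces the stated bound and the equality case $T\cong P_{2m}$ exactly, in the same way the paper's $BT_{n,b}$ and $DT_{n,k}$ corollaries instantiate $f(3)=9(n-4)$, $f(2)=4(n-3)$, $f(1)=n-2$. Your additional care about the cubic not being globally convex --- replacing $f$ by a strictly convex function agreeing with it at $1,2,3,4$, which exists because the second differences are positive under the hypothesis $m\ge 7$ --- correctly patches a point the paper passes over in silence, and is the only genuinely nontrivial step in the argument.
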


For example, if $\mathscr{D}=(4, 4,3,
 3,3,3,3,2,1,1,1, 1, 1, 1,  1, 1, 1,1,1)$, $T^*_{\mathscr{D}}$  is shown in Figure~\ref{fig:placeholder}.
  There is a vertex $v_{01}$ (the root) in layer 0 with the largest degree 4; its four
  neighbors are labeled as $v_{11}, v_{12}, v_{13}, v_{14}$ in layer 1, with
  degrees 4, 3, 3, 3 from left to right; nine
 vertices $v_{21},v_{22}, \cdots, v_{29}$ in layer 2; five
 vertices $v_{31}, v_{32}, v_{33}, v_{34}, v_{35}$ in layer 3. The number
 of vertices in each layer $i$, denoted by $s_i$ can be easily calculated as
 $s_0=1,$
 $ s_1=d_0=4,$ $s_2=d_1+d_2+d_3+d_4-s_1=4+3+3+3-4=9, $ and
 $s_3=d_5+\cdots+d_{13}-s_2=5$.

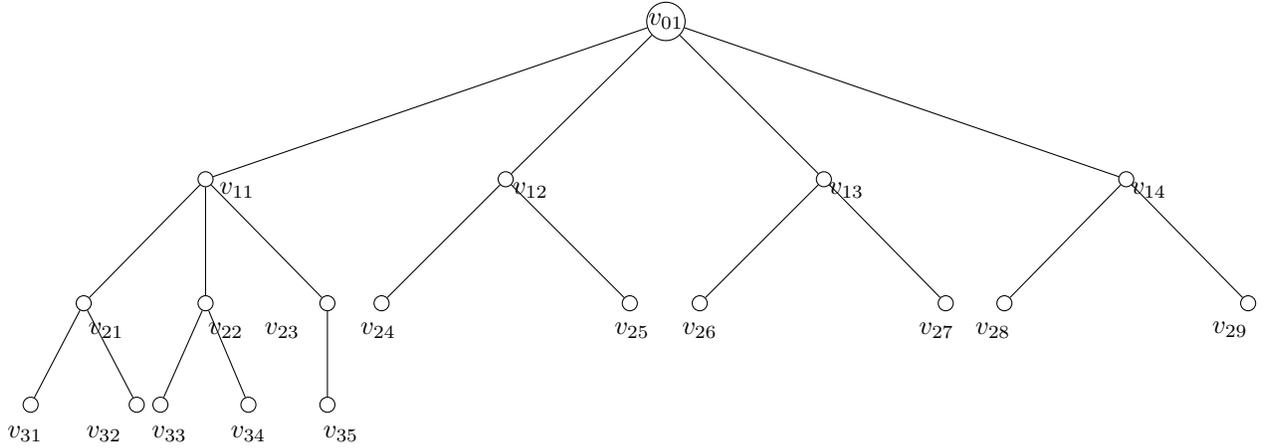
\begin{figure}[H]
    \centering
   \begin{tikzpicture}[scale=.3, every node/.style={draw,circle,minimum size=0.2cm,inner sep=0pt}]
    \node (v01) at (28,27) {$v_{01}$};

    \node (v11) at (7.6,20) {};
    \node[draw=none] at (9,19.5) {$v_{11}$};
    \node (v12) at (20.9,20) {};
    \node[draw=none] at (22,19.5) {$v_{12}$};
    \node (v13) at (35,20) {};
    \node[draw=none] at (36,19.5) {$v_{13}$};
    \node (v14) at (48.4,20) {};
    \node[draw=none] at (49.4,19.5) {$v_{14}$};

    \node (v21) at (2.2,14.5) {};
    \node[draw=none] at (3.2,13.3) {$v_{21}$};
    \node (v22) at (7.6,14.5) {};
    \node[draw=none] at (8.5,13.3) {$v_{22}$};
    \node (v23) at (13,14.5) {};
    \node[draw=none] at (11,13.3) {$v_{23}$};
    \node (v24) at (15.4,14.5) {};
    \node[draw=none] at (15.25,13.3) {$v_{24}$};
    \node (v25) at (26.4,14.5) {};
    \node[draw=none] at (26.5,13.3) {$v_{25}$};
    \node (v26) at (29.5,14.5) {};
    \node[draw=none] at (29.5,13.3) {$v_{26}$};
    \node (v27) at (40.4,14.5) {};
    \node[draw=none] at (40,13.3) {$v_{27}$};
    \node (v28) at (43,14.5) {};
    \node[draw=none] at (42.5,13.3) {$v_{28}$};
    \node (v29) at (53.8,14.5) {};
    \node[draw=none] at (53,13.34) {$v_{29}$};

    \node (v31) at (-0.15,10) {};
    \node[draw=none] at (-0.4,8.7) {$v_{31}$};
    \node (v32) at (4.55,10) {};
    \node[draw=none] at (3.1,8.7) {$v_{32}$};
    \node (v33) at (5.6,10) {};
    \node[draw=none] at (6,8.7) {$v_{33}$};
    \node (v34) at (9.5,10) {};
    \node[draw=none] at (9.5,8.7) {$v_{34}$};
    \node (v35) at (13,10) {};
    \node[draw=none] at (13.6,8.7) {$v_{35}$};

    \draw (v01) -- (v11); \draw (v01) -- (v12); \draw (v01) -- (v13); \draw (v01) -- (v14);

    \draw (v11) -- (v21); \draw (v11) -- (v22); \draw (v11) -- (v23);

    \draw (v12) -- (v24); \draw (v12) -- (v25);

    \draw (v13) -- (v26); \draw (v13) -- (v27);

    \draw (v14) -- (v28); \draw (v14) -- (v29);

    \draw (v21) -- (v31); \draw (v21) -- (v32);
    \draw (v22) -- (v33); \draw (v22) -- (v34);
    \draw (v23) -- (v35);

\end{tikzpicture}

    \caption{Tree with 3 levels.}
    \label{fig:placeholder}
\end{figure}
Assume $\mathscr{D}=(4, 4,3,
 3,3,3,3,2,1,1,1, 1, 1, 1,  1, 1, 1,1,1)$ be a degree sequence in caterpillar tree, through Table~\ref{tabbyjasemn1} we observe the topological indices according to degree sequence. 
 \begin{table}[H]
\centering
\begin{tabular}{|l|l|}
\hline
\text{Index} & \text{Value} \\
\hline
$^{m}M_2(G)$ & 3.424 \\\hline
F(G) & 282 \\\hline
$M_2(G)$ & 103 \\\hline
$NK(G)$ & 7{,}776 \\\hline
$\mathscr{D}_1(G)$ & 60{,}466{,}176 \\\hline
$\mathscr{D}_2(G)$ & 14{,}660{,}155{,}008 \\\hline
$\mathscr{D}_1^*(G)$ & 928{,}972{,}800{,}000 \\\hline
$\operatorname{SCI}(G)$ & 8.515 \\\hline
$\operatorname{SDD}(G)$ & 51.917 \\
\hline
\end{tabular}
\caption{Topological indices with a degree sequence.}
 \label{tabbyjasemn1}
 \end{table}
\begin{theorem}
Consider the degree sequence $\mathscr{D}=(d_1,d_2,\dots,d_n)$ where the edges are $\{d_1,d_2\},\{d_3,d_4\},\dots, \{d_{n-1},d_n\}$. Then, according to Table~\ref{tabjasemv2} we show the Topological indices. 
\end{theorem}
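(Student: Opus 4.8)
The plan is to establish the table by direct evaluation of each listed topological index from its definition, exploiting the fact that the edge set is prescribed explicitly. Write $m$ for the number of edges; the hypothesis puts $E(G)$ equal to the $m$ pairs $\{v_1,v_2\},\{v_3,v_4\},\dots,\{v_{n-1},v_n\}$, so $n=2m$ and, by the handshake identity, $\sum_{i=1}^{n} d_i=2m$. First I would sort the indices appearing in Table~\ref{tabjasemv2} into three families according to their functional form: the \emph{bond-additive} ones $I(G)=\sum_{uv\in E(G)}\varphi(d(u),d(v))$ with $\varphi$ symmetric (this family contains $M_2$, $F$, ${}^{m}M_2$, $\operatorname{SCI}$, $\operatorname{SDD}$, $\operatorname{ISI}$, $H$, $\operatorname{ABC}$, $\operatorname{AZI}$, $HM_1$, $HM_2$, $GA$, $AG$, $SO$, ${}^{m}SO$, $\mathscr{D}_2$ and $\mathscr{D}_1^{*}$); the \emph{vertex-additive} ones $I(G)=\sum_{v\in V(G)}\psi(d(v))$ (containing $M_1$, the alternative form of $F$, $SL$ and $SEI_a$); and the \emph{multiplicative} ones, a product over vertices or edges (containing $NK$, $\Pi_{1,c}$, $\Pi_2$ and $\mathscr{D}_1$).

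Next I would substitute the explicit combinatorial structure into each family. Since $E(G)=\{\{v_{2k-1},v_{2k}\} : 1\le k\le m\}$, every bond-additive index collapses to the finite sum $\sum_{k=1}^{m}\varphi(d_{2k-1},d_{2k})$, every vertex-additive index collapses to $\sum_{i=1}^{n}\psi(d_i)$, and each multiplicative index collapses to the matching finite product, e.g. $\Pi_2(G)=\prod_{k=1}^{m} d_{2k-1}d_{2k}$ and $NK(G)=\prod_{i=1}^{n} d_i$. Plugging in the particular numeric degree sequence used to generate the table then reduces the claim to an arithmetic verification carried out entry by entry against Table~\ref{tabjasemv2}; this is exactly parallel to the caterpillar computation summarised in Table~\ref{tabbyjasemn1} and can be handled by the same bookkeeping.

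The main obstacle is organizational rather than conceptual, with two parts. First, several of the indices — $\operatorname{ABC}$, $\operatorname{AZI}$, $\operatorname{ISI}$, $\operatorname{SDD}$, ${}^{m}M_2$ — carry denominators of the form $d(u)+d(v)-2$ or $d(u)\,d(v)$, so the pairing must be checked to produce no edge with $d_{2k-1}=d_{2k}=1$ (which would make the $\operatorname{ABC}$ and $\operatorname{AZI}$ summands undefined) and no isolated vertex; if such degenerate edges occur, the corresponding rows of the table must be interpreted with the usual convention of discarding undefined or zero-contribution edges, and this should be stated. Second, for the multiplicative indices one must justify the identity $\prod_{uv\in E(G)} d(u)d(v)=\prod_{v\in V(G)} d(v)^{d(v)}$ in this setting, which again follows from the fact that each vertex's degree counts its incident edges, i.e. from the handshake/realizability condition $\sum_i d_i=2m$. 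Once these two points are settled, the remainder of the proof is the routine numeric tabulation matching the closed forms above to the entries of Table~\ref{tabjasemv2}.
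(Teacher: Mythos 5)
Your proposal is correct and follows essentially the same route as the paper: the paper's proof likewise sets $a_k=d_{2k-1}$, $b_k=d_{2k}$ and evaluates each index in Table~\ref{tabjasemv2} by direct substitution of the prescribed edge pairs into its defining sum or product. Your extra grouping into bond-additive, vertex-additive and multiplicative families and the caveats about degenerate denominators are harmless refinements, not a different argument.
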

\begin{proof}
Consider a graph $ G = (V,E) $ with vertex set $V = \{1,2,\dots,n\}$
and degree sequence $\mathscr{D} = (d_1, d_2, \dots, d_n).$ The edges are specified as pairs: $E(G) = \{ \{1,2\}, \{3,4\}, \dots, \{n-1, n\} \},$
assuming $ n $ is even for this pairing. We denote the degrees of the vertices in each edge as
\[
a_k = d_{2k-1}, \quad b_k = d_{2k}, \quad k = 1, 2, \dots, \frac{n}{2}.
\]
Thus, the edges are between vertices with degrees $a_k$ and $b_k$.
Substituting $ d(u) = a_k $, $ d(v) = b_k $ for edge $ (2k-1, 2k) $, we have for each index by considering 
\begin{equation}~\label{eq1mainre}
{}^m M_2(G) = \sum_{k=1}^{n/2} \frac{1}{a_k b_k},
\end{equation}
from~\eqref{eq1mainre} we noticed that $F(G) = \sum_{k=1}^{n/2} (a_k^2 + b_k^2)$ which equals 
\begin{equation}~\label{eq2mainre}
F(G)=   \sum_{v=1}^n d_v^3, \quad M_2(G) = \sum_{k=1}^{n/2} a_k b_k.
\end{equation}
Therfore, from~\eqref{eq1mainre} and \eqref{eq2mainre} we established that for 
\begin{equation}~\label{eq3mainre}
NK(G)  = \prod_{k=1}^n d_k = \prod_{k=1}^{n/2} (a_k b_k),
\end{equation}
and 
\[
\mathscr{D}_1(G) = \prod_{k=1}^n d_k^2 = \left( \prod_{k=1}^n d_k \right)^2 = (NK(G))^2 = \left( \prod_{k=1}^{n/2} a_k b_k \right)^2,
\mathscr{D}_2(G) = \prod_{k=1}^{n/2} a_k b_k.
\]
Thus, from~\eqref{eq1mainre}--\eqref{eq3mainre} we find that
\begin{equation}~\label{eq4mainre}
    \mathscr{D}_1^*(G) = \prod_{k=1}^{n/2} (a_k + b_k)
\end{equation}
Finally, from~\eqref{eq4mainre} we have
\[ 
SCI(G) = \sum_{k=1}^{n/2} \frac{1}{\sqrt{a_k + b_k}},
SDD(G) = \sum_{k=1}^{n/2} \left( \frac{a_k}{b_k} + \frac{b_k}{a_k} \right).
\]
Then, 
\begin{table}[H]
\centering
\begin{tabular}{|c|l|c|l|c|l|}
\hline
\textbf{Index} & \textbf{Expression} & \textbf{Index} & \textbf{Expression} & \textbf{Index} & \textbf{Expression} \\
\hline
$^{m}M_2(G)$ & $ \displaystyle \sum_{k=1}^{n/2} \frac{1}{a_k b_k} $ &
$F(G)$ & $ \displaystyle \sum_{k=1}^{n/2} \left( a_k^2 + b_k^2 \right) $ & $M_2(G)$ & $ \displaystyle \sum_{k=1}^{n/2} a_k b_k $ \\\hline
$NK(G)$ & $ \displaystyle \prod_{k=1}^{n/2} a_k b_k $ &
$\mathscr{D}_1(G)$ & $ \displaystyle \prod_{k=1}^n d_k^2 = \left(\prod_{k=1}^{n/2} a_k b_k \right)^2 $ &
$\mathscr{D}_2(G)$ & $ \displaystyle \prod_{k=1}^{n/2} a_k b_k $ \\\hline
$\mathscr{D}_1^{*}(G)$ & $ \displaystyle \prod_{k=1}^{n/2} (a_k + b_k) $ &
$SCI(G)$ & $ \displaystyle \sum_{k=1}^{n/2} \frac{1}{\sqrt{a_k + b_k}} $ &
$SDD(G)$ & $ \displaystyle \sum_{k=1}^{n/2} \left( \frac{a_k}{b_k} + \frac{b_k}{a_k} \right) $ \\
\hline
\end{tabular}
\caption{Topological indices for graph $G$ with edge pairs $\{(2k-1, 2k)\}$ and degrees $a_k = d_{2k-1}, b_k = d_{2k}$.}~\label{tabjasemv2}
\end{table}
As desire.
\end{proof}
Actually, the first geometric index~\cite{J-D-N1} of caterpillar tree with degree sequence $\mathscr{D}=(d_1,\dots,d_n)$ as: 
	\[
	\operatorname{GA}(\mathscr{C})=\frac{4d_1\sqrt{d_1}}{d_1+1} + \frac{2kd_1\sqrt{d_1+1}}{d_1+2} + \frac{4\sqrt{d_1(d_1+1)}}{2d_1+1}.
	\]
where the inequality if and only if $d_2=d_1+1$. Also, for the sequence $d=(d_1,d_2, \underbrace{1,1,\dots}_{n},d_1)$ of $\mathscr{C}(d_1,d_2)$ of order $n\geq 3$, then the sum-connectivity index of caterpillar tree is: 
	\[
	\mathcal{X}(\mathscr{C})=\frac{2(d_1+1)}{\sqrt{d_1+4}} + \frac{kd_1}{\sqrt{d_1+3}} + \frac{2}{\sqrt{2d_1 + 3}}.    \]
	where the inequality if and only if $d_2=d_1+1, d_3=d_2+1$. 
In Table~\ref{tabcomp} we show the value of topological indices by considering the order of vertices is $100$, so that in this case $k=98$.
\begin{table}[H]
\centering
	\begin{tabular}{|c|c|c|c|c|c|c|c|}
		\hline
		$d_1$ & $\operatorname{irr}(\mathscr{C})$ & $\sigma(\mathscr{C})$ & $\mathcal{X}(\mathscr{C})$ & $\operatorname{GA}(\mathscr{C})$ & $\sigma-\operatorname{irr}$&  $\operatorname{GA}-\mathcal{X}$ \\ \hline \hline
		3  & 20    & 55     & 50.289  & 70.942  & 35      & 20.653   \\ \hline
		4  & 34    & 130    & 99.218  & 140.424 & 96      & 41.206   \\ \hline
		5  & 52    & 253    & 171.493 & 238.326 & 201     & 66.833   \\ \hline
		6  & 74    & 430    & 269.944 & 363.615 & 356     & 93.671   \\ \hline
		7  & 100   & 663    & 394.062 & 515.606 & 563     & 121.544  \\ \hline
		8  & 130   & 958    & 543.351 & 693.832 & 828     & 150.481  \\ \hline
		9  & 164   & 1315   & 717.214 & 897.845 & 1151    & 180.631  \\ \hline
		10 & 202   & 1738   & 915.095 & 1127.202& 1536    & 212.107  \\ \hline
		11 & 244   & 2221   & 1136.434& 1381.518& 1977    & 245.084  \\ \hline
		12 & 290   & 2762   & 1380.700& 1660.322& 2472    & 279.622  \\ \hline
		13 & 340   & 3367   & 1647.472& 1963.252& 3027    & 315.780  \\ \hline
		14 & 394   & 4034   & 1936.233& 2290.044& 3640    & 353.811  \\ \hline
		15 & 452   & 4765   & 2246.477& 2630.435& 4313    & 383.958  \\ \hline
		16 & 514   & 5558   & 2577.702& 2994.161& 5044    & 416.459  \\ \hline
		17 & 580   & 6419   & 2929.415& 3380.852& 5839    & 451.437  \\ \hline
		18 & 650   & 7348   & 3301.120& 3790.149& 6698    & 489.029  \\ \hline
		19 & 724   & 8341   & 3692.325& 4221.784& 7617    & 529.459  \\ \hline
		20 & 802   & 9402   & 4102.540& 4675.481& 8600    & 572.941  \\
		\hline
	\end{tabular}
	\caption{Compared Vertices on Caterpillar tree of order $(n,m)$.} \label{tabcomp}
\end{table}
\begin{theorem}[\cite{Gut2025man}]~\label{thm1Gut2025man}
Let $G$ be a simple graph on $n>1$ vertices, and $p_{1}, p_{2}, \ldots, p_{n}$ be non-negative integers. Let $G^{*}$ be the thorny graph of $G$. Then the Sombor
index of $G^{*}$ satisfies the relation
$$
\operatorname{SO}\left(G^{*}\right)=\sum_{e_{i j}} \sqrt{\left[d\left(v_{i}\right)+p_{i}\right]^{2}+\left[d\left(v_{i}\right)+p_{j}\right]^{2}}+\sum_{i=1}^{n} p_{i} \sqrt{\left[d\left(v_{i}\right)+p_{i}\right]^{2}+1} .
$$
\end{theorem}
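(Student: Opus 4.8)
The plan is to compute the Sombor index of $G^*$ directly from its definition by partitioning the edge set $E(G^*)$ into two natural classes and evaluating the degree of each vertex of $G^*$. First I would record the key degree fact: in $G^*$, an original vertex $v_i$ acquires $p_i$ new pendent neighbours, so its degree becomes $d_{G^*}(v_i) = d_G(v_i) + p_i$, while each newly attached thorn is a pendent vertex and hence has degree $1$ in $G^*$. This is the only structural input needed, and it follows immediately from the definition of the thorny graph $G^*$ given earlier in the excerpt.

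Next I would split $E(G^*) = E_1 \cup E_2$, where $E_1$ consists of the edges inherited from $G$ (i.e. each original edge $e_{ij} = v_iv_j$) and $E_2$ consists of the $\sum_{i=1}^n p_i$ thorn edges. For an edge $e_{ij} \in E_1$, both endpoints are original vertices, so its contribution to $\operatorname{SO}(G^*)$ is $\sqrt{d_{G^*}(v_i)^2 + d_{G^*}(v_j)^2} = \sqrt{[d(v_i)+p_i]^2 + [d(v_j)+p_j]^2}$; summing over all $e_{ij}$ gives the first sum in the claimed formula. (Here I would note that the exponent "$d(v_i)+p_j$" appearing in the statement is a typographical slip for $d(v_j)+p_j$.) For a thorn edge in $E_2$ incident to $v_i$, one endpoint has degree $d_{G^*}(v_i) = d(v_i)+p_i$ and the other (the thorn) has degree $1$, contributing $\sqrt{[d(v_i)+p_i]^2 + 1^2}$; since there are exactly $p_i$ such edges at $v_i$, the total over $E_2$ is $\sum_{i=1}^n p_i\sqrt{[d(v_i)+p_i]^2 + 1}$, which is the second sum. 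Adding the two contributions yields the stated identity.

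There is essentially no obstacle here: the result is a bookkeeping identity, and the "hard part" is purely one of careful edge-classification and making sure the index set of the first sum ranges over edges of $G$ (not of $G^*$) while the degrees are measured in $G^*$. I would close by remarking that this theorem is the master formula from which the two preceding corollaries follow: setting $d(v_i) = r$ and $p_i = p$ for all $i$ recovers $\operatorname{SO}(G^*) = \frac{\sqrt{2}\,nr}{2}(r+p) + np\sqrt{(r+p)^2+1}$ (using $m = nr/2$ for an $r$-regular graph), and imposing $d(v_i)+p_i = D$ for all $i$ collapses the first sum to $\sqrt{2}\,Dm$ and the second to $\sqrt{D^2+1}\,\sum_i p_i = \sqrt{D^2+1}\,(n^*-n)$.
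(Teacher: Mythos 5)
Your proof is correct: the paper states this theorem as an imported result from the cited reference and gives no proof of its own, and your edge-partition argument (original edges versus thorn edges, with $d_{G^*}(v_i)=d_G(v_i)+p_i$ and thorns of degree $1$) is exactly the standard derivation of this bookkeeping identity. You are also right that $[d(v_i)+p_j]^2$ in the displayed formula is a typographical slip for $[d(v_j)+p_j]^2$, and your closing consistency check against the two regular/thorn-regular corollaries confirms the formula.
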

According to Theorem~\ref{thm1Gut2025man}, we provide the upper bound of Sombor index by next theorem. 

\begin{theorem}~\label{thm2Gut2025man}
Let $G$ be a simple graph of order $n$. Then, the upper bound of Sombor index satisfy
\begin{equation}~\label{eq1thm2Gut2025man}
\operatorname{SO} \leqslant \sum_{uv\in E(G)}\sqrt{\frac{1}{\deg_{G}(u)^2+\deg_{G}(v)^2}+\deg_{G}(u)+\deg_{G}(v)}.
\end{equation}
\end{theorem}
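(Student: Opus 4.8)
The plan is to prove \eqref{eq1thm2Gut2025man} \emph{edge by edge}. Since $\operatorname{SO}(G)=\sum_{uv\in E(G)}\sqrt{\deg_G(u)^2+\deg_G(v)^2}$ and the right-hand side of \eqref{eq1thm2Gut2025man} are both sums indexed by the same edge set $E(G)$ (and in the thorny setting one first reduces to this case by specializing Theorem~\ref{thm1Gut2025man} to $p_1=\cdots=p_n=0$, so that $G^{*}=G$), it suffices to show that for every edge $uv\in E(G)$,
\[
\sqrt{\deg_G(u)^2+\deg_G(v)^2}\;\le\;\sqrt{\frac{1}{\deg_G(u)^2+\deg_G(v)^2}+\deg_G(u)+\deg_G(v)},
\]
and then sum over all edges. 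Writing $x=\deg_G(u)$, $y=\deg_G(v)$ and $A=x^2+y^2\ge 1$, both sides are non-negative, so squaring turns the edgewise claim into the polynomial inequality
\[
A^{2}\;\le\;1+A\,(x+y),\qquad\text{equivalently}\qquad (x^2+y^2)^2-(x^2+y^2)(x+y)-1\le 0 .
\]

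First I would record the elementary fact $\sqrt{x^2+y^2}\le x+y$ for $x,y\ge 0$ (it follows from $x^2+y^2\le(x+y)^2$), which already bounds $\operatorname{SO}(G)$ by $\sum_{uv\in E(G)}\big(\deg_G(u)+\deg_G(v)\big)=M_1(G)$; the remaining point is that the extra summand $1/A$ on the right is precisely what is needed to pass, under the radical, from $x+y$ to $\sqrt{A+1/A}$ — that is, to verify $A^{2}-A(x+y)-1\le 0$ for the degree pairs that actually occur on edges. Viewing the left-hand side as a quadratic in $A$, this is the same as $A\le\tfrac12\big((x+y)+\sqrt{(x+y)^{2}+4}\big)$, so the step reduces to a comparison between $x^2+y^2$ and a function of $s:=x+y$; fixing $s$ and using $x^2+y^2=s^{2}-2xy$, which is extremal for the most unbalanced admissible pair, this becomes a single-variable check.

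The main obstacle is precisely this last polynomial inequality: it is the only non-formal ingredient of the argument, and it is where the hypotheses on $G$ must enter, because $x^2+y^2$ grows faster in the vertex degrees than $x+y$ does. I expect it to go through for the classes for which the bound is intended — pendant edges, regular graphs, and thorn-regular graphs, as indicated after the statement — with equality on an edge forced exactly when $\deg_G(u)=\deg_G(v)$ is as small as the structure allows; consequently the global equality case in the simple-graph formulation is $G\cong K_2$, and in the thorny formulation it matches the equality configurations already visible in the exact formula of Theorem~\ref{thm1Gut2025man}. Once the edgewise inequality is in hand, the proof closes by summing over $E(G)$ and reading the equality analysis off the pointwise discussion.
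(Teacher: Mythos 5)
Your reduction to an edgewise comparison is the natural way to read the statement, but the polynomial inequality you isolate as the ``only non-formal ingredient'' --- $A^{2}\le 1+A(x+y)$ with $A=x^{2}+y^{2}$ --- is exactly where the argument collapses, and no hypothesis available in the theorem rescues it. For $x=1$, $y=2$ one gets $A=5$ and $A^{2}=25>16=1+A(x+y)$; for $x=y=2$ one gets $64>33$. Since the theorem is stated for an arbitrary simple graph $G$, these degree pairs occur already on $P_{3}$ and $C_{3}$, and there the \emph{global} inequality fails too: $\operatorname{SO}(P_{3})=2\sqrt{5}\approx 4.47$ while the right-hand side of \eqref{eq1thm2Gut2025man} equals $2\sqrt{1/5+3}\approx 3.58$, and $\operatorname{SO}(C_{3})=6\sqrt{2}\approx 8.49$ versus $3\sqrt{1/8+4}\approx 6.09$. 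So the statement itself is false for essentially every graph other than a disjoint union of copies of $K_{2}$ (where $x=y=1$ gives $4\le 5$); your closing guess that validity is tied to $K_{2}$ is the one part of the heuristic that survives. Deferring the key inequality with ``I expect it to go through'' is therefore a genuine gap, not a formality: $\sqrt{x^{2}+y^{2}}$ grows linearly in the degrees while $\sqrt{1/(x^{2}+y^{2})+x+y}$ grows only like $\sqrt{x+y}$, so the claimed bound cannot hold once any degree exceeds $1$.

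For comparison, the paper's own proof does not supply the missing step either: it begins by setting $\deg_{G}(u)+\deg_{G}(v)=\lambda\leqslant\Delta(G)$ (false in general, since a sum of two positive degrees typically exceeds the maximum degree) and then chains together unrelated bounds such as $\operatorname{SO}\leqslant\lambda^{2}(\lambda-1)+2\lambda^{3}-1$ and $\operatorname{SO}(G)\leqslant\Delta\,n\,\sqrt{2(n-1-\delta)}$ without ever deriving \eqref{eq1thm2Gut2025man}. So you have not missed a correct argument hidden in the source; rather, your clean reduction makes visible that the inequality, as stated, is not provable because it is not true.
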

\begin{proof}
Assume $G$ be a simple graph of order $n$, then we noticed that $\deg_{G}(u)+\deg_{G}(v) =\lambda\leqslant \Delta(G)$. Thus, for $\lambda\geqslant 3$, we have $\operatorname{SO} \leqslant \lambda^2(\lambda-1)+2\lambda^3-1.$ Then, the upper bound of Sombor index given as 
\begin{equation}~\label{eq2thm2Gut2025man}
\operatorname{SO}(G) \leqslant \sqrt{\lambda^2(\lambda-1)^3+n\lambda+m(\lambda-1)+\Delta\lambda}.
\end{equation}
Using the inequalities $\frac{1}{\sqrt{2}}(a+b) \leq \sqrt{a^{2}+b^{2}}<a+b$, the following well-known estimates for the Sombor index are straightforwardly obtained:
\begin{equation}~\label{eq03thm2Gut2025man}
\frac{1}{\sqrt{2}} M_{1}(G) \leq S O(G)<M_{1}(G) . \tag{2}
\end{equation}
We noticed that from~\eqref{eq2thm2Gut2025man} holds the relationship $\operatorname{SO}(G) \leq \Delta \, n \, \sqrt{2(n - 1 - \delta)}.$ equality holds if and only if $G$ is regular. Then, 
\begin{equation}~\label{eq3thm2Gut2025man}
\operatorname{SO}(G) \leqslant \sqrt{\frac{1}{\lambda^2} (\lambda-1)^3+n\lambda+2m+\Delta}.
\end{equation}
Therefore, from~\eqref{eq2thm2Gut2025man} and \eqref{eq3thm2Gut2025man} and according to Theorem~\ref{thm1Gut2025man} we noticed that 
\[
\operatorname{SO}\left(G^{*}\right)>\sum_{e_{i j}} \sqrt{\left[d\left(v_{i}\right)+p_{i}\right]^{2}+\left[d\left(v_{i}\right)+p_{j}\right]^{2}},
\]
satisfied according to~\eqref{eq03thm2Gut2025man} an upper bound is sharp for regular graphs, in case $\delta = \Delta = r$, and it simplifies to $\operatorname{SO}(G) = n r \sqrt{2(n-1 - r)}.$ Then, the relationship~\eqref{eq1thm2Gut2025man} holds. As desire.
\end{proof}
\begin{theorem}[\cite{Gut2025man}]
Let $G$ be a simple graph on $n>1$ vertices, and $p_{1}, p_{2}, \ldots, p_{n}$ be non-negative integers. Let $G^{*}$ be the thorny graph of $G$. Then the Sombor index of $G^{*}$ is bounded as
\[
\frac{1}{\sqrt{2}}\left[M_{1}(G)+2 \sum_{i=1}^{n} p_{i} d\left(v_{i}\right)+\sum_{i=1}^{n} p_{i}\left(p_{i}+1\right)\right]<S O\left(G^{*}\right)< M_{1}(G)+2 \sum_{i=1}^{n} p_{i} d\left(v_{i}\right)+\sum_{i=1}^{n} p_{i}\left(p_{i}+1\right).
\]
\end{theorem}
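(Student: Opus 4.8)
The plan is to derive both bounds directly from the exact expression for $SO(G^{*})$ furnished by Theorem~\ref{thm1Gut2025man}, namely
\[
SO(G^{*})=\sum_{v_iv_j\in E(G)}\sqrt{(d(v_i)+p_i)^2+(d(v_j)+p_j)^2}+\sum_{i=1}^{n}p_i\sqrt{(d(v_i)+p_i)^2+1},
\]
and then to estimate every radical by the elementary sandwich $\tfrac{1}{\sqrt{2}}(a+b)\le\sqrt{a^2+b^2}<a+b$, valid for $a,b>0$ (the right-hand inequality strict, the left-hand one an equality precisely when $a=b$). Applying it with $a=d(v_i)+p_i$, $b=d(v_j)+p_j$ on the edge sum and with $a=d(v_i)+p_i$, $b=1$ on the thorn sum reduces the whole problem to a purely combinatorial bookkeeping exercise about sums of (modified) vertex degrees.

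For the upper bound I would replace each radical by the corresponding sum $a+b$, obtaining
\[
SO(G^{*})<\sum_{v_iv_j\in E(G)}\bigl[(d(v_i)+p_i)+(d(v_j)+p_j)\bigr]+\sum_{i=1}^{n}p_i\bigl[(d(v_i)+p_i)+1\bigr].
\]
The right-hand side is then simplified using the handshake-type identities $\sum_{v_iv_j\in E(G)}(d(v_i)+d(v_j))=\sum_{i}d(v_i)^2=M_1(G)$ and $\sum_{v_iv_j\in E(G)}(p_i+p_j)=\sum_{i}p_i\,d(v_i)$ (each vertex $v_i$ being counted once for every incident edge), together with $\sum_i p_i\bigl[(d(v_i)+p_i)+1\bigr]=\sum_i p_i d(v_i)+\sum_i p_i(p_i+1)$. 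Collecting the terms yields exactly $M_1(G)+2\sum_{i=1}^{n}p_i d(v_i)+\sum_{i=1}^{n}p_i(p_i+1)$, which is the asserted upper bound; the inequality is strict as soon as $G^{*}$ has at least one edge, since then at least one radical is bounded strictly by $a+b$.

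The lower bound is obtained the same way, replacing each radical by $\tfrac{1}{\sqrt{2}}(a+b)$ and pulling the common factor $\tfrac{1}{\sqrt{2}}$ out of the identical bookkeeping computation, which gives $\tfrac{1}{\sqrt{2}}\bigl[M_1(G)+2\sum_i p_i d(v_i)+\sum_i p_i(p_i+1)\bigr]$. The step I expect to be the most delicate is justifying the \emph{strict} inequality on the left: equality in $\tfrac{1}{\sqrt{2}}(a+b)\le\sqrt{a^2+b^2}$ occurs exactly when $a=b$, so the left bound is strict only when $G^{*}$ has some edge whose two endpoint degrees differ, i.e.\ when $G^{*}$ is not a disjoint union of regular components (this already excludes, e.g., $G=K_2$ with all $p_i=0$). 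One should therefore either impose a mild nondegeneracy hypothesis or weaken the left inequality to ``$\le$''; pinning down the precise equality cases in terms of the original degree sequence $\mathscr{D}(G)$ and the thorn vector $(p_1,\dots,p_n)$ is where the genuine work lies, the rest being the routine termwise estimate plus the degree-sum identities above.
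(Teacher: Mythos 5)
Your proposal is correct and is essentially the intended argument: the paper states this result only as a citation without reproducing a proof, but the exact formula of Theorem~\ref{thm1Gut2025man} combined with the termwise sandwich $\tfrac{1}{\sqrt{2}}(a+b)\le\sqrt{a^{2}+b^{2}}<a+b$ (which the paper itself invokes right next to this statement to get $\tfrac{1}{\sqrt{2}}M_{1}(G)\le SO(G)<M_{1}(G)$) is precisely the standard derivation, and your handshake-identity bookkeeping reproducing $M_{1}(G)+2\sum_i p_i d(v_i)+\sum_i p_i(p_i+1)$ is right. Your reservation about the left-hand strictness is also well founded --- as transcribed the strict lower bound already fails for $G=K_{2}$ with $p_{1}=p_{2}=0$, which is why the unthorned version is written with ``$\le$''; the strict form needs a nondegeneracy hypothesis (some edge of $G^{*}$ whose endpoints have distinct degrees, e.g.\ some $p_i\ge 1$ attached to a non-isolated vertex).
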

Thus, it is clear that $\operatorname{SO}(G^{*})< M_{1}(G)$.
\begin{theorem}[\cite{Nikiforov2006V}]
If $m\geq n\left(  n-1\right)  /4$. Then, we have $m\sqrt{8m+1}-3m\leq f\left(  n,m\right)  \leq m\sqrt{8m+1}-m.$
Moreover, for $m<\left(  n-1\right)  \left(  n-2\right)  /2,$ then $m\sqrt{8m+1}-m<D\left(  n,m\right),$ where 
\[
f(n,m)  =\max\left\{  \sum_{u\in V\left(  G\right)  }d^{2}\left(
u\right)  :v\left(  G\right)  =n,\text{ }e\left(  G\right)  =m\right\}  .
\]
\end{theorem}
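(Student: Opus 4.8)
The plan is to recognise $f(n,m)$ as the maximum first Zagreb index $M_1(G)=\sum_u d(u)^2=\sum_{uv\in E(G)}(d(u)+d(v))$ over all $n$-vertex, $m$-edge graphs, and to pin the maximiser down to one explicit graph. I would use the \emph{quasi-complete} graph $QC(n,m)$: fix the integer $t$ with $\binom{t}{2}\le m<\binom{t+1}{2}$, write $m=\binom{t}{2}+s$ with $0\le s\le t-1$, take a clique $K_t$, attach one extra vertex joined to $s$ of its vertices, and leave the remaining $n-t-1$ vertices isolated. This is an admissible graph (one checks $t+1\le n$ using $m\le\binom{n}{2}$), so $f(n,m)\ge M_1(QC(n,m))$ for free; the reverse — that in the dense range $m\ge n(n-1)/4$ this graph actually attains the maximum — is the substantive input, which I would take from the Ahlswede--Katona extremal theorem (the crossover between the quasi-star and quasi-complete optima occurs precisely around $m\asymp n(n-1)/4$). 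Establishing this extremal characterisation is the main obstacle; the naive degree-sequence/LP relaxation is too weak, since it ignores graphicality and already overshoots the target bound.

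The second step is an exact evaluation. In $QC(n,m)$ there are $s$ vertices of degree $t$, $t-s$ vertices of degree $t-1$, and one vertex of degree $s$, giving
\[
M_1(QC(n,m))=s\,t^2+(t-s)(t-1)^2+s^2=t(t-1)^2+s(2t-1)+s^2 .
\]
Using the handshake relation $t(t-1)=2(m-s)$, this collapses to the clean closed form
\[
M_1(QC(n,m))=2m(t-1)+s^2+s .
\]
I would then introduce the continuous clique parameter $\theta=\tfrac12(\sqrt{8m+1}-1)$, the positive root of $\theta(\theta+1)=2m$, so that the target quantity is exactly $m\sqrt{8m+1}-m=2m\theta$, and record that $t-1\le\theta<t$, i.e. $\theta=t-1+\delta$ with $\delta\in[0,1)$. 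A short computation from $m=\binom{t}{2}+s=\tfrac12\theta(\theta+1)$ expresses the defect as $2s=\delta(2t-1+\delta)$, reducing everything to the single parameter $\delta$.

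For the lower bound the estimate is then immediate: the target $f(n,m)\ge m\sqrt{8m+1}-3m=2m\theta-2m$ amounts to $s^2+s\ge 2m(\delta-1)$, and since $\delta<1$ the right-hand side is negative while the left-hand side is nonnegative; thus $QC(n,m)$ already witnesses the lower bound with room to spare (even $-2m$ suffices), and no extremality is needed here. For the upper bound $f(n,m)=M_1(QC(n,m))\le 2m\theta$ I would substitute $\theta=t-1+\delta$ and $2s=\delta(2t-1+\delta)$ to reduce $M_1(QC)\le 2m\theta$ to an explicit polynomial inequality in $t$ and $\delta$; the coefficient of $t^2$ is the positive quantity $4(1-\delta)$, its vertex lies below $t=1$, and one checks the value at $t=2$ equals $-\delta^3-2\delta^2+\delta+2>0$ on $[0,1)$, so the quadratic is nonnegative throughout the admissible range $t\ge 2$, $0\le\delta<1$. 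This is elementary but slightly delicate, and the $-m$ on the right is exactly tight (equality at $s=0$, i.e. triangular $m$, where $QC(n,m)=K_t$).

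Finally, for the \emph{moreover} clause I would move to the sparse range $m<\binom{n-1}{2}$, where the maximiser switches to the \emph{quasi-star} $QS(n,m)$ (the complement of a quasi-complete graph); writing $D(n,m)$ for its sum of squared degrees, a direct estimate of $M_1(QS(n,m))$ — dominated by the vertices of degree near $n-1$ — shows it grows faster than the clique-based quantity, yielding the strict inequality $m\sqrt{8m+1}-m<D(n,m)$; here the precise meaning of $D(n,m)$ (the quasi-star optimum) should be fixed before the estimate is written out. The one genuine difficulty in the whole argument remains the first step: certifying that the quasi-complete graph (respectively the quasi-star) is the true optimiser in each density regime. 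Once that is granted, every remaining step is a single-variable estimate in $\delta$ together with one polynomial check.
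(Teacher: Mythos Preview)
The paper itself supplies \emph{no proof} for this theorem: it is stated with a bare citation to Nikiforov~\cite{Nikiforov2006V} at the end of Section~5 and the Conclusion follows immediately, so there is no in-paper argument to compare your proposal against. Your outline is in fact the standard Ahlswede--Katona/Nikiforov approach --- identify the extremiser as the quasi-complete graph in the dense regime (quasi-star in the sparse one), compute $M_1(QC(n,m))=2m(t-1)+s^2+s$ exactly, and linearise via the continuous parameter $\theta=\tfrac12(\sqrt{8m+1}-1)$ --- and your algebra checks out, including the reduction of the upper bound to a quadratic in $t$ with positive leading coefficient. You correctly flag the one nontrivial step, namely that the quasi-complete graph is the genuine maximiser when $m\ge n(n-1)/4$; this is the content of the Ahlswede--Katona theorem and is precisely what Nikiforov invokes, so your attribution is accurate rather than a gap. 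The only loose end is the ``moreover'' clause: you are right that $D(n,m)$ should be read as the quasi-star value, and the strict inequality there needs the observation that for $m<\binom{n-1}{2}$ the quasi-star is not itself quasi-complete, so the inequality is indeed strict --- worth one sentence when you write it out.
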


\section{Conclusion}\label{sec6}
Through this paper, the framework of asymptotic degree sequences $\mathcal{D}$ and random graphs with fixed degree sequences $\mathcal{D}_n$. Key parameters such as the threshold function $\mathcal{Q}(\mathfrak{D})$ govern the emergence of giant components in such random graphs, linking combinatorial properties to probabilistic behavior. The definitions and lemmas presented formalize conditions under which random graphs almost surely exhibit structural properties, including chromatic thresholds and component sizes. Additionally, the spectral bounds derived from topological indices and Frobenius norms provide insights into graph energy and eigenvalue constraints. This comprehensive foundation enables further analysis of sparse and well-behaved degree sequences in random graph models. \par 
In conclusion, the explored Adriatic indices, including the sum lordeg index $SL(G)$ and the variable sum exdeg index $SEI_a(G)$, capture essential vertex degree properties of graphs, and their extremal values relate closely to degree sequences and graph classes such as thorn-regular and chemical trees. The derived theorems and corollaries establish sharp bounds and characterize equality cases for several topological indices on various tree families, demonstrating how strict convexity of functions $f$ governs these inequalities. The detailed examples and computations for caterpillar trees provide concrete insights into the structure-index interplay, while bounds on the Sombor index of thorny graphs further illustrate the nuanced dependence of these indices on vertex degrees. Overall, these results deepen the understanding of the interaction between graph structure and vertex-degree-based topological indices.

\section*{Declarations}
\begin{itemize}
\item Funding: Not Funding.
\item Data availability statement: All data is included within the manuscript.
\end{itemize}

\end{document}